\newtheorem{thm}{Theorem}
\newtheorem{rem}{Remark}
\newtheorem{lem}{Lemma}
\newtheorem{cor}{Corollary}
\DeclareMathOperator{\1}{\textbf{1}}
\newcommand{\Ocal}{O}
\DeclareMathOperator{\id}{id}
\global\long\def\epsilon{\varepsilon} 
\DeclareMathAlphabet{\mathpzc}{OT1}{pzc}{m}{it}
\begin{document}
\date{}

\title{On sums of weighted averages of $\gcd$-sum functions}
\author{Isao Kiuchi and Sumaia Saad Eddin}

\maketitle
{\def\thefootnote{}
\footnote{{\it Mathematics Subject Classification 2010: 11A25, 11N37, 11Y60.\\ Keywords: $\gcd$-sum functions; Euler totient function; Dedekind  function; Mean value formula.}}

\begin{abstract}
Let $\gcd(j,k)$ be the greatest common  divisor of the integers $j$ and $k$.
In this paper, we give  several interesting  asymptotic formulas  for  weighted averages  of the $\gcd$-sum function 
$
f(\gcd(j,k)) 
$
and the function 
$
\sum_{d|k, d^{s}|j}(f*\mu)(d)  
$
for any positive  integers $j$ and $k$,   namely  
$$
\sum_{k\leq x}\frac{1}{k^{r+1}}\sum_{j=1}^{k}j^{r}f(\gcd(j,k))
\quad
\text{and}
\quad  
\sum_{k\leq x}\frac{1}{k^{s(r+1)}}\sum_{j=1}^{k^s}j^{r} \sum_{\substack{d|k \\ d^{s}|j}}(f*\mu)(d) 
$$
with any fixed  integer $s> 1$ and  any arithmetical function $f$.
We also  establish  mean value formulas for the error terms  
of asymptotic formulas  for partial sums of  $\gcd$-sum functions 
$
f(\gcd(j,k)). 
$
\end{abstract}

\section{Introduction and statements of  the  results }                                

\subsection{Weighted averages of $\gcd$-sum functions }
Let $\gcd(j,k)$ be the greatest common  divisor of the integers $j$ and $k$. The $\gcd$-sum function,  which is also known as Pillai's arithmetical function,  
is defined by 
$$
P(n) = \sum_{k=1}^{n}\gcd(k,n).
$$
Properties and various generalizations of $P(n)$ have been widely studied by many authors.
For a nice survey on it see \cite{To1}.  Let 
$$
P_{f}(n)=\sum_{k=1}^{n}f(\gcd(k,n))
$$
be one of many generalizations of $P(n)$, for an arbitrary arithmetical function $f$.
In 2010,  Bordell\'{e}s \cite{Bo1} provided  some general asymptotic formulas for the 
partial sum of $P_{f}(n)$ with $f$ belonging to certain classes of arithmetic functions.
More recently, the sum of  the weighted average of the $\gcd$-sum functions with various 
completely multiplicative functions  as weights was   first  considered  by the first author  \cite{K1}. 
For any fixed positive integer $r$ and any arithmetical function $f$,  he  proved   that   
\begin{align}                                     \label{ikiuchi}
M_{r}(x;f)& := \sum_{k\leq x} \frac{1}{k^{r+1}}\sum_{j=1}^{k}j^{r}f(\gcd(j,k)) \\ 
& = \frac12  \sum_{n\leq x}\frac{f(n)}{n}   + \frac{1}{r+1}\sum_{dl\leq x}\frac{(f*\mu)(d)}{d}   \nonumber \\
& \qquad  \qquad  + \frac{1}{r+1}\sum_{m=1}^{[r/2]}\binom{r+1}{2m} B_{2m} \sum_{dl\leq x}\frac{(f*\mu)(d)}{d} \frac{1}{l^{2m}}.    \nonumber 
\end{align} 
Here  $\mu$, as usual, denotes the M\"{o}bius function and   $B_{m}(x)$  is   the  Bernoulli  polynomials 
defined by the generating  function 
$$
\frac{ze^{xz}}{e^{z}-1}=\sum_{m=0}^{\infty}B_{m}(x)\frac{z^m}{m!},
$$
with $|z|<  2\pi$, 
where $B_m$ is the Bernoulli number given by $B_{m}(0)$ (see \cite{Ap}, \cite{Coh}). \\
Let $\tau$ be the divisor function defined by $\1*\1$, and  let $\theta$ be the number appearing in the Dirichlet divisor problem, namely 
\begin{equation} 
\label{Dirichlet}
\sum_{n\leq x}\tau(n) = x\log x + (2\gamma -1)x + \Delta(x),                                  
\end{equation}
with $\Delta(x)=\Ocal \left(x^{\theta+\varepsilon}\right)$ for any positive number $\varepsilon$.
Many applications of (\ref{ikiuchi}) have been given in \cite{K1}.
For example, with $f=\id$, the first author  proved that
\begin{multline}                                                                                                 \label{K-id}
 M_{r}(x;\id)   
  = \frac{1}{(r+1)\zeta(2)} x\log x + \frac{x}{2}   \\
 + \frac{1}{(r+1)\zeta(2)}\left(2\gamma -1 - \frac{\zeta'(2)}{\zeta(2)} + \sum_{m=1}^{[r/2]}\binom{r+1}{2m} B_{2m} \zeta(2m+1) \right)x  
 + K_{r}(x), 
\end{multline}
where $\zeta$ denotes the Riemann zeta-function and   
\begin{equation}                                                                                                 \label{K-K}
  K_{r}(x) 
= \frac{1}{r+1} \sum_{n\leq x}\frac{\mu(n)}{n}\Delta\left(\frac{x}{n}\right) + \Ocal_{r}\left(\log x\right).
\end{equation}
For any real or complex number $a$, 
$\sigma_{a}$  denotes  a generalized  divisor function defined by $\id_{a}*{\bf 1}$,
 where $ \id_{a}(n)=n^a$ for any positive integer $n$. 
We recall that   
\begin{equation}                                                                                               \label{Delta-a}
\sum_{n\leq x}\sigma_{a}(n) =  \zeta(1-a)x +  \frac{\zeta(1+a)}{1+a} x^{1+a} - \frac{\zeta(-a)}{2} + \Delta_{a}(x),   
\end{equation}
for  $-1 < a < 0$. The problem of improving  $\Delta_{a}(x)$ is known as the generalized Dirichlet divisor problem.  
In 1988,   P\'{e}termann~\cite{Pe} showed that 
\begin{equation}                                                                                                \label{Peter}
\Delta_{a}(x) = 
\Ocal \left(x^{\frac{1+a}{3}+\varepsilon}\right),
\end{equation}
for  any  small  number $\varepsilon>0$.  
In this direction of research, the first author~\cite{k} derived the following mean value formula, for $-\frac12 <a<0$,  
\begin{equation}                                                                                               \label{K-mean}
 \int_{0}^{T}\Delta_{a}(x)^{2}dx  
= \frac{\zeta(\frac{3}{2} -a)\zeta(\frac{3}{2} +a)\zeta(\frac{3}{2})^{2}}{\zeta(3)(6+4a)\pi^2} T^{\frac{3}{2}+a} 
+ \Ocal\left(T^{\frac{5}{4}+\frac{a}{2}+\varepsilon}\right) + \Ocal\left(T^{1+\varepsilon}\right).
\end{equation}
The first purpose of this paper is to give   an  asymptotic formula of $M_{r}(x;\id_{1+a})$ for $-1<a<0$.  
We prove that 
\begin{thm} 
 \label{th1}
 For  any  real number $x >1$,  
fixed  positive  integer $r$  and  
fixed  number $a$ such that $-1<a<0$,  we have
\begin{multline}                                   \label{sum_th1}
 M_{r}(x;\id_{1+a})    
 =\frac{\zeta(1-a)}{(r+1)\zeta(2)} x  + \frac{1}{2(1+a)}x^{1+a} \\
 + \frac{x^{1+a}}{(a+1)(r+1)\zeta(2+a)} 
\left(\zeta(1+a) + \sum_{m=1}^{[r/2]}\binom{r+1}{2m} B_{2m} \zeta(2m+a+1)\right)
  \\+ K_{r}(x;\id_{1+a}),                       
\end{multline}
where  
\begin{equation}                                  \label{K-S-th1}
  K_{r}(x;\id_{1+a}) 
= \frac{1}{r+1} \sum_{n\leq x}\frac{\mu(n)}{n}\Delta_{a}\left(\frac{x}{n}\right) + \Ocal_{r,a}\left(1\right).
\end{equation}
\end{thm}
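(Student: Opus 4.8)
The plan is to specialize the identity (\ref{ikiuchi}) to $f=\id_{1+a}$, where $f(n)/n=n^{a}$ and $(f*\mu)(d)=(\id_{1+a}*\mu)(d)$, and then to evaluate each resulting sum asymptotically, isolating the one piece that must remain as the error term $K_{r}(x;\id_{1+a})$. Writing $d=ef$ in $(\id_{1+a}*\mu)(d)=\sum_{e\mid d}e^{1+a}\mu(d/e)$ gives the factorization
\[
\frac{(\id_{1+a}*\mu)(d)}{d}=\sum_{ef=d}\frac{e^{a}\mu(f)}{f},
\]
i.e.\ this arithmetic function is the Dirichlet convolution of $e\mapsto e^{a}$ with $f\mapsto\mu(f)/f$. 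Hence, writing $c_{m}=\binom{r+1}{2m}B_{2m}$ for $0\leq m\leq[r/2]$ (so $c_{0}=1$), identity (\ref{ikiuchi}) becomes
\[
M_{r}(x;\id_{1+a})=\frac12\sum_{n\leq x}n^{a}+\frac{1}{r+1}\sum_{m=0}^{[r/2]}c_{m}\sum_{f\leq x}\frac{\mu(f)}{f}\,S_{m}\!\left(\frac{x}{f}\right),\qquad S_{m}(y):=\sum_{el\leq y}\frac{e^{a}}{l^{2m}},
\]
so the whole task reduces to asymptotics for $S_{m}(y)$ and for the twisted sums $\sum_{f\leq x}\mu(f)f^{-1}S_{m}(x/f)$.

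I would handle the $m=0$ term first, as it is the only one that produces the linear term and the genuine error term. Here $S_{0}(y)=\sum_{n\leq y}\sum_{e\mid n}e^{a}=\sum_{n\leq y}\sigma_{a}(n)$, to which (\ref{Delta-a}) applies directly. Substituting it and completing $\sum_{f\leq x}\mu(f)f^{-2}$ and $\sum_{f\leq x}\mu(f)f^{-2-a}$ to $\zeta(2)^{-1}$ and $\zeta(2+a)^{-1}$ — their tails being $\Ocal(1/x)$ and $\Ocal(x^{-(1+a)})$, hence harmless after multiplication by $x$ and by $x^{1+a}$ respectively — and using $\sum_{f\leq x}\mu(f)/f=\Ocal(1)$, I obtain
\[
\frac{1}{r+1}\sum_{f\leq x}\frac{\mu(f)}{f}S_{0}\!\left(\frac{x}{f}\right)=\frac{\zeta(1-a)}{(r+1)\zeta(2)}x+\frac{\zeta(1+a)}{(a+1)(r+1)\zeta(2+a)}x^{1+a}+\frac{1}{r+1}\sum_{n\leq x}\frac{\mu(n)}{n}\Delta_{a}\!\left(\frac{x}{n}\right)+\Ocal_{r,a}(1).
\]

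For $m\geq 1$ I would evaluate $S_{m}(y)$ by summing over $l$ first and applying Euler--Maclaurin in the form $\sum_{e\leq z}e^{a}=\frac{z^{1+a}}{1+a}+\zeta(-a)+\Ocal(z^{a})$; since $m\geq 1$ forces $2m+1+a>1$ and $2m+a>1$, all the resulting $l$-series converge and $S_{m}(y)=\frac{\zeta(2m+a+1)}{1+a}y^{1+a}+\zeta(2m)\zeta(-a)+\Ocal(y^{a})$. Inserting this into $\sum_{f\leq x}\mu(f)f^{-1}S_{m}(x/f)$, again completing $\sum_{f\leq x}\mu(f)f^{-2-a}$ to $\zeta(2+a)^{-1}$ and bounding the remaining error via $\sum_{f\leq x}f^{-(1+a)}=\Ocal(x^{-a})$, yields $\frac{\zeta(2m+a+1)}{(1+a)\zeta(2+a)}x^{1+a}+\Ocal_{a}(1)$; multiplying by $c_{m}/(r+1)$ and summing the finitely many $m$ reproduces precisely the Bernoulli sum in (\ref{sum_th1}), up to $\Ocal_{r,a}(1)$. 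Together with $\frac12\sum_{n\leq x}n^{a}=\frac{1}{2(1+a)}x^{1+a}+\frac{\zeta(-a)}{2}+\Ocal(x^{a})$, which supplies the term $\frac{1}{2(1+a)}x^{1+a}$, collecting the three contributions gives (\ref{sum_th1}) with $K_{r}(x;\id_{1+a})$ as in (\ref{K-S-th1}).

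The one delicate point is the separation of the $m=0$ term from the $m\geq 1$ terms: for $m=0$ the exponent $2m+1+a=1+a$ is at most $1$, so $\sum_{l}l^{-(2m+1+a)}$ diverges, and it is exactly this divergence that produces — through the main term $\zeta(1-a)x$ of (\ref{Delta-a}) — the linear term of (\ref{sum_th1}), whereas for $m\geq 1$ every Dirichlet series in sight converges and only an $x^{1+a}$-term and bounded terms survive. Apart from this, the argument is routine bookkeeping: since $-1<a<0$, all leftover contributions — powers $x^{a}$ and $x^{1-2m}$, the tails of $\sum\mu(f)f^{-s}$, and the constants $\zeta(-a)$, $\zeta(1-a)$ and $\zeta(2m)\zeta(-a)$ — are bounded in $x$ and merge into the single $\Ocal_{r,a}(1)$ of (\ref{K-S-th1}).
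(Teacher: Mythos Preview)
Your proof is correct and follows essentially the same approach as the paper: specialize (\ref{ikiuchi}) to $f=\id_{1+a}$, recognize that the $m=0$ piece is $\sum_{f\le x}\mu(f)f^{-1}\sum_{n\le x/f}\sigma_a(n)$ and apply (\ref{Delta-a}), and evaluate the $m\ge1$ pieces via $\sum_{e\le z}e^{a}=\frac{z^{1+a}}{1+a}+\zeta(-a)+O(z^{a})$ together with the tails of $\sum\mu(f)f^{-2-a}$. The only cosmetic difference is that for $m\ge1$ the paper groups the triple sum as $\sum_{d}\bigl(\tfrac{\mu}{\id}*\tfrac{1}{\id_{2m}}\bigr)(d)\sum_{l\le x/d}l^{a}$ (Lemma~\ref{lem2}, (\ref{lem2-la})) while you group it as $\sum_{f}\mu(f)f^{-1}S_{m}(x/f)$; both orderings sum the $a$-power variable first and yield the same asymptotic.
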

Note that \eqref{K-S-th1} is an analogue of \eqref{K-K}.  
Using the elementary results  of $\Delta(x)$ and $\Delta_{a}(x)$,
the functions $K_{r}(x)$ and $K_{r}(x;\id_{1+a})$ are  estimated  by 
$\Ocal_{r}\left(x^{1/2 +\varepsilon}\right)$  and $\Ocal_{r}\left(x^{\frac{1+a}{3}+\varepsilon}\right)$, respectively.   
Hence,   we  get 
$$
\lim_{x\to\infty}\frac{M_{r}(x;\id)}{x\log x} = \frac{1}{(r+1)\zeta(2)}
\quad \text{and} \quad 
\lim_{x\to\infty}\frac{M_{r}(x;\id_{1+a})}{x} = \frac{\zeta(1-a)}{(r+1)\zeta(2)}.
$$\\

Let $\phi$ be the Euler totient function defined by $\phi=\id*\mu$.    
As a second  application of~\eqref{ikiuchi} 
with  $f=\phi$, the first author showed that   
\begin{multline}                                                                                                   \label{K-phi}
M_{r}(x;\phi)    
 =\frac{1}{(r+1)\zeta^{2}(2)}x \log x   \\
 + \frac{1}{(r+1)\zeta^{2}(2)}\left(2\gamma -1 -2\frac{\zeta'(2)}{\zeta(2)} + \sum_{m=1}^{[r/2]}\binom{r+1}{2m} B_{2m} \zeta(2m+1)\right) x \\
  + \frac{x}{2\zeta(2)} + L_{r}(x),  
\end{multline}
 where 
\begin{equation}                                                                                                        \label{K-phi1}  
  L_{r}(x) 
:= \frac{1}{r+1}\sum_{n\leq x}\frac{(\mu*\mu)(n)}{n}\Delta\left(\frac{x}{n}\right) + \Ocal_{r}\left((\log x)^2\right). 
\end{equation}
On taking the Dedekind function $\psi\ (=\id*|\mu|)$ in place of the Euler totient function $\phi$ into  
(\ref{ikiuchi}),  he also deduced 
\begin{multline}                                                                                                         \label{gcd331}
M_{r}(x;\psi)  =  \frac{1}{(r+1)\zeta^{}(4)} x\log x      \\
+ 
 \frac{1}{(r+1)\zeta(4)} 
\left(2\gamma - 1 - 2 \frac{\zeta'(4)}{\zeta(4)}+ \sum_{m=1}^{[r/2]}\binom{r+1}{2m}B_{2m} \zeta(2m+1) \right)x  \\
+ \frac{\zeta(2)}{2\zeta(4)} x  + U_{r}(x),  
\end{multline}
 where  
\begin{equation}                                                                                                             \label{Ur}
U_{r}(x) :=  \frac{1}{r+1} \sum_{n\leq x} \frac{(\mu*|\mu|)(n)}{n}\Delta\left(\frac{x}{n}\right)+  \Ocal_{r}\left((\log x)^{2}\right).
\end{equation}

Let $\phi_{\alpha}$ and $\psi_{\beta}$ denote the Jordan totient function and the generalized Dedekind function
defined by   $\id_{\alpha}*\mu$ and $\id_{\beta}*|\mu|$, respectively. 
Our  second purpose  is to  provide  two  asymptotic formulas  of the partial sums of  weighted averages of 
$\phi_{1+a}(\gcd(k,j))$ and $\psi_{1+a}(\gcd(k,j))$  for any fixed negative number $a\ (-1<a<0)$.    
We prove  that 
\begin{thm}  \label{th2}
 For any  real number $x >1$,   
          fixed positive  integer $r$ and  
          fixed number $a$ such that  $-1<a<0$,     we have
\begin{multline}                                  \label{sum_th2}
 M_{r}(x;\phi_{1+a})    
 = \frac{\zeta(1-a)}{(r+1)\zeta(2)^2} x  + \frac{1}{2(1+a)\zeta(2+a)}x^{1+a} \\
+ \frac{x^{1+a}}{(a+1)(r+1)\zeta(2+a)^2} 
\left(\zeta(1+a) + \sum_{m=1}^{[r/2]}\binom{r+1}{2m} B_{2m} \zeta(2m+a+1)\right) \\
 + K_{r}(x;\phi_{1+a})                     
\end{multline}
and 
\begin{multline}                                                                                                     \label{sum_th22}
M_{r}(x;\psi_{1+a})    
 = \frac{\zeta(1-a)}{(r+1)\zeta(4)} x  + \frac{\zeta(2+a)}{2(1+a)\zeta(4+2a)}x^{1+a}  \\
+ \frac{x^{1+a}}{(a+1)(r+1)\zeta(4+2a)} 
\left(\zeta(1+a) + \sum_{m=1}^{[r/2]}\binom{r+1}{2m} B_{2m} \zeta(2m+a+1)\right)  \\
 + K_{r}(x;\psi_{1+a}),                                        
\end{multline}
where   
\begin{equation}                                                                                                    \label{K-S-th2}
  K_{r}(x;\phi_{1+a}) 
= \frac{1}{r+1} \sum_{n\leq x}\frac{(\mu*\mu)(n)}{n}\Delta_{a}\left(\frac{x}{n}\right) + \Ocal_{r,a}\left((\log x)^2\right) 
\end{equation}
and   
\begin{equation}                                                                                                 \label{K-S-th22}
  K_{r}(x;\psi_{1+a}) 
= \frac{1}{r+1} \sum_{n\leq x}\frac{(|\mu|*\mu)(n)}{n}\Delta_{a}\left(\frac{x}{n}\right) + \Ocal_{r,a}\left((\log x)^2\right). 
\end{equation}
\end{thm}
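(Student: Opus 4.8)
The plan is to substitute $f=\phi_{1+a}$ and then $f=\psi_{1+a}$ into the identity \eqref{ikiuchi} and evaluate the three sums that appear there. Since $\phi_{1+a}=\id_{1+a}*\mu$ and $\psi_{1+a}=\id_{1+a}*|\mu|$, the convolution $f*\mu$ occurring in \eqref{ikiuchi} equals $\id_{1+a}*g$, where $g=\mu*\mu$ in the first case and $g=|\mu|*\mu$ in the second; these functions have Dirichlet series $\zeta(s)^{-2}$, respectively $\zeta(2s)^{-1}$, and in both cases $|g(n)|\le\tau(n)$. Writing $h=\mu$ (resp. $|\mu|$), we have $f(n)/n=\sum_{uv=n}u^{a}h(v)/v$ and $(f*\mu)(d)/d=\sum_{uv=d}u^{a}g(v)/v$, so after interchanging the order of summation each of the three sums becomes an outer sum over $v$ of $g(v)/v$ or $h(v)/v$ times an inner sum over $u$ of powers of $u$ weighted by $l^{-2m}$. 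Throughout, the elementary bounds $\sum_{v\le x}\tau(v)/v\ll(\log x)^{2}$ and $\sum_{v\le x}\tau(v)/v^{1+a}\ll x^{-a}\log x$ are used to control the error contributions.

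First I would treat $\tfrac12\sum_{n\le x}f(n)/n=\tfrac12\sum_{v\le x}\tfrac{h(v)}{v}\sum_{u\le x/v}u^{a}$. Inserting $\sum_{u\le z}u^{a}=\tfrac{z^{1+a}}{1+a}+\zeta(-a)+\Ocal(z^{a})$ and using the value of the Dirichlet series of $h$ at $s=2+a$, namely $\zeta(2+a)^{-1}$ (resp. $\zeta(2+a)/\zeta(4+2a)$), produces the term $\tfrac{1}{2(1+a)\zeta(2+a)}x^{1+a}$ (resp. $\tfrac{\zeta(2+a)}{2(1+a)\zeta(4+2a)}x^{1+a}$); the remaining pieces are $\Ocal_{r,a}((\log x)^{2})$.

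Next, for each $m$ with $0\le m\le[r/2]$, I would write $\sum_{dl\le x}\tfrac{(f*\mu)(d)}{d}\tfrac{1}{l^{2m}}=\sum_{v\le x}\tfrac{g(v)}{v}\sum_{ul\le x/v}\tfrac{u^{a}}{l^{2m}}$. The case $m=0$ is the crucial step: here $\sum_{ul\le y}u^{a}=\sum_{n\le y}\sigma_{a}(n)$, so \eqref{Delta-a} gives $\sum_{ul\le y}u^{a}=\zeta(1-a)y+\tfrac{\zeta(1+a)}{1+a}y^{1+a}-\tfrac{\zeta(-a)}{2}+\Delta_{a}(y)$. Substituting $y=x/v$ and summing against $g(v)/v$: the linear term, multiplied by $\tfrac{1}{r+1}$ and combined with the value of the Dirichlet series of $g$ at $s=2$ (which is $\zeta(2)^{-2}$, resp. $\zeta(4)^{-1}$), gives the main term $\tfrac{\zeta(1-a)}{(r+1)\zeta(2)^{2}}x$ (resp. $\tfrac{\zeta(1-a)}{(r+1)\zeta(4)}x$); the $y^{1+a}$ term gives one piece of the $x^{1+a}$ coefficient via the value at $s=2+a$; the constant $-\zeta(-a)/2$ contributes $\Ocal_{r,a}((\log x)^{2})$; and $\sum_{v\le x}\tfrac{g(v)}{v}\Delta_{a}(x/v)$ is exactly the sum $\sum_{n\le x}\tfrac{(\mu*\mu)(n)}{n}\Delta_{a}(x/n)$ (resp. $\sum_{n\le x}\tfrac{(|\mu|*\mu)(n)}{n}\Delta_{a}(x/n)$) in \eqref{K-S-th2} (resp. \eqref{K-S-th22}). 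For $1\le m\le[r/2]$ the extra weight $l^{-2m}$ with $2m\ge2$ makes the $l$-series absolutely convergent, so no $\Delta_{a}$ survives: the Euler--Maclaurin expansion of $\sum_{u\le z}u^{a}$ suffices and yields $\sum_{ul\le y}u^{a}l^{-2m}=\tfrac{\zeta(2m+1+a)}{1+a}y^{1+a}+\zeta(-a)\zeta(2m)+\Ocal(y^{a})$; multiplying by $\tfrac{1}{r+1}\binom{r+1}{2m}B_{2m}$, summing against $g(v)/v$, and adding over $m$ produces precisely the $\sum_{m=1}^{[r/2]}\binom{r+1}{2m}B_{2m}\zeta(2m+a+1)$ part of the $x^{1+a}$ coefficient, the rest being $\Ocal_{r,a}((\log x)^{2})$. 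Adding the three contributions yields \eqref{sum_th2} and \eqref{sum_th22}.

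I expect the main difficulty to be the bookkeeping of the error terms rather than any single estimate. One must check that all constant and secondary contributions — the $\zeta(-a)$ from Euler--Maclaurin, the $-\zeta(-a)/2$ from \eqref{Delta-a}, the $\zeta(-a)\zeta(2m)$ from the $m\ge1$ sums, and the tails of the Dirichlet series of $g$ and $h$ — collapse into $\Ocal_{r,a}((\log x)^{2})$, which relies only on $|(\mu*\mu)(n)|,|(|\mu|*\mu)(n)|\le\tau(n)$ and the two elementary sums above; and that the three sources of $x^{1+a}$-terms (namely $\tfrac12\sum_{n\le x}f(n)/n$, the secondary term of \eqref{Delta-a} at $m=0$, and the $m\ge1$ sums) combine with exactly the coefficients displayed in \eqref{sum_th2} and \eqref{sum_th22}. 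The argument runs parallel to the proof of Theorem~\ref{th1}; the only new ingredient is the replacement of $\mu$ by $\mu*\mu$ (resp. $|\mu|*\mu$), which both squares (resp. shifts) the relevant zeta-factors in the main terms and, since $\sum_{v\le x}|(\mu*\mu)(v)|/v$ and $\sum_{v\le x}|(|\mu|*\mu)(v)|/v$ have size $(\log x)^{2}$ rather than $O(1)$, weakens the error term in $K_{r}$ from $\Ocal_{r,a}(1)$ to $\Ocal_{r,a}((\log x)^{2})$.
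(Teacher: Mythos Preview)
Your proposal is correct and follows essentially the same approach as the paper: substitute $f=\phi_{1+a}$ (resp.\ $\psi_{1+a}$) into \eqref{ikiuchi}, rewrite $(f*\mu)/\id$ as $(\mu*\mu)/\id*\id_a$ (resp.\ $(|\mu|*\mu)/\id*\id_a$), and evaluate the three resulting sums using \eqref{lem1-apo} and \eqref{Delta-a} together with the elementary partial sums of $g(v)/v^{s}$ for $s=1,2,2+a$. The only cosmetic difference is that the paper packages the evaluations of the second and third sums into Lemma~\ref{lem3} (and, for the $m\ge1$ terms, regroups the triple convolution as $\frac{\mu*\mu}{\id}*\frac{\sigma_{a+2m}}{\id_{2m}}$ rather than your $\frac{g}{\id}$ against $\sum_{ul\le y}u^{a}l^{-2m}$), but the computations and error bookkeeping are the same.
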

Using \eqref{Peter},  we see that \eqref{K-S-th2} and \eqref{K-S-th22} are estimated by    
$
\Ocal_{r,a}\left(x^{\frac{1+a}{3}+\varepsilon}\right).
$
This leads to the following result.   
\begin{cor} 
Under the hypothesis of Theorem~\ref{th2},  we have 
$$
\lim_{x\to \infty}\frac{ M_{r}(x;\phi_{1+a})}{x} = \frac{\zeta(1-a)}{(r+1)\zeta^{2}(2)}, 
\quad 
\quad  
\lim_{x\to \infty}\frac{M_{r}(x;\psi_{1+a})}{x}  = \frac{\zeta(1-a)}{(r+1)\zeta(4)},    
$$
and 
\begin{multline}
\label{Remark_th2}
\zeta(2+a)^{2} M_{r}(x;\phi_{1+a}) - \zeta(4+2a)M_{r}(x;\psi_{1+a})\\  =  \frac{\zeta(1-a)}{r+1}\left(\frac{\zeta(2+a)^2}{\zeta(2)^2}-\frac{\zeta(4+a)}{\zeta(4)}\right) x    \\
 +  \zeta(2+a)^{2}K_{r}(x;\phi_{1+a}) - \zeta(4+2a) K_{r}(x;\psi_{1+a}). 
\end{multline}
\end{cor}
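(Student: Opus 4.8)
The plan is to read all three statements directly off the two asymptotic formulas \eqref{sum_th2} and \eqref{sum_th22} of Theorem~\ref{th2}, combined with the bound $K_{r}(x;\phi_{1+a}),\,K_{r}(x;\psi_{1+a})=\Ocal_{r,a}\left(x^{(1+a)/3+\varepsilon}\right)$ established just above via \eqref{Peter}. Since $-1<a<0$, both $1+a$ and (for $\varepsilon$ small enough) $(1+a)/3+\varepsilon$ are strictly less than $1$, so every term of \eqref{sum_th2} and \eqref{sum_th22} other than the one proportional to $x$ is $o(x)$.

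For the two limits, divide \eqref{sum_th2} by $x$: the constant $\zeta(1-a)/((r+1)\zeta(2)^{2})$ survives, while the $x^{1+a}$-terms and $K_{r}(x;\phi_{1+a})/x$ tend to $0$ by the exponent bounds just recalled; this gives the first limit, and dividing \eqref{sum_th22} by $x$ gives the second in the same way.

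For \eqref{Remark_th2}, multiply \eqref{sum_th2} by $\zeta(2+a)^{2}$ and \eqref{sum_th22} by $\zeta(4+2a)$ and subtract. The heart of the matter is that the $x^{1+a}$-contributions cancel exactly: on the $\phi_{1+a}$ side they combine to
\[
\frac{\zeta(2+a)}{2(1+a)}\,x^{1+a}+\frac{x^{1+a}}{(a+1)(r+1)}\left(\zeta(1+a)+\sum_{m=1}^{[r/2]}\binom{r+1}{2m}B_{2m}\zeta(2m+a+1)\right),
\]
and multiplying the $x^{1+a}$-part of \eqref{sum_th22} by $\zeta(4+2a)$ reproduces this very expression, because the denominators $\zeta(2+a)$ and $\zeta(2+a)^{2}$ occurring there cancel against the chosen multipliers. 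What is left after the subtraction is the term linear in $x$, namely $\left(\zeta(2+a)^{2}\zeta(1-a)/((r+1)\zeta(2)^{2})-\zeta(4+2a)\zeta(1-a)/((r+1)\zeta(4))\right)x$, together with the two untouched error terms $\zeta(2+a)^{2}K_{r}(x;\phi_{1+a})-\zeta(4+2a)K_{r}(x;\psi_{1+a})$; this is precisely \eqref{Remark_th2}.

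There is no substantial obstacle; the one point needing care is the exact matching of the $\zeta$-factors in the $x^{1+a}$-coefficients against the multipliers $\zeta(2+a)^{2}$ and $\zeta(4+2a)$, which is what forces the two bracketed sums over $m$ to cancel term by term and thereby removes the $x^{1+a}$ contribution from the combination \eqref{Remark_th2}.
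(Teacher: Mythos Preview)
Your proposal is correct and follows exactly the same route as the paper, which simply notes that \eqref{Remark_th2} ``follows at once from \eqref{sum_th2} and \eqref{sum_th22}'' after the $K_r$-terms are bounded via \eqref{Peter}. Your explicit verification that the $x^{1+a}$-contributions cancel under the chosen multipliers is precisely the mechanism the paper leaves implicit (and in fact your computation yields $\zeta(4+2a)/\zeta(4)$, revealing that the $\zeta(4+a)$ in the stated formula is a typo).
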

This latter formula follows at once from \eqref{sum_th2} and \eqref{sum_th22}. 
\subsection{Mean value theorems} 
The mean value formulas of $L_{r}(x)$ and $U_{r}(x)$, defined by \eqref{K-phi1} and \eqref{Ur}, were first considered by the first author~\cite{K1}. For any fixed positive  integer $r$, he showed that 
\begin{align}                                                                                                    \label{Lrx}
\int_{1}^{T}L_{r}(x)^{2}dx = \frac{D_{1}}{(r+1)^2} T^{3/2} + \Ocal_{r}\left(T^{5/4+\varepsilon}\right),
\end{align}
and 
\begin{align}                                                                                                    \label{Urx}
\int_{1}^{T}U_{r}(x)^{2}dx = \frac{D_{2}}{(r+1)^2} T^{3/2} + \Ocal_{r}\left(T^{5/4+\varepsilon}\right),
\end{align}
 where 
$$
D_{1}=\frac{1}{6\pi^2}\sum_{n=1}^{\infty}\left(\left(\frac{\mu*\mu}{\sqrt{\id}}*\tau\right)(n)\right)^{2}\frac{1}{n^{3/2}}
$$
and 
$$
D_{2}=\frac{1}{6\pi^2}\sum_{n=1}^{\infty}\left(\left(\frac{\mu*|\mu|}{\sqrt{\id}}*\tau\right)(n)\right)^{2}\frac{1}{n^{3/2}}. 
$$
Our aim in this paper is to establish a mean value formula of $K_{r}(x;{\id}_{1+a})$,  $K_{r}(x;\phi_{1+a})$ and $K_{r}(x;\psi_{1+a}),$  
defined by \eqref{K-S-th1}, \eqref{K-S-th2} and \eqref{K-S-th22},  respectively. Our results are precisely  the following:     

\begin{thm}  \label{th3}
 For any real number $T >1$, fixed positive integer  $r$, and fixed number $a$ such that $-\frac{1}{4} < a < 0$,  we have
\begin{equation}                                                                                             \label{sum_th3}     
\int_{1}^{T}K_{r}(x;\id_{1+a})^{2}dx 
= \frac{C_{2,a}}{(r+1)^{2}} T^{\frac{3}{2}+a}  
 + \Ocal_{r}\left(T^{\frac{5}{4}+\varepsilon}\right) + \Ocal_{r,a}\left(T^{1-a+\varepsilon}\right),    
\end{equation}
where 
$$
C_{2,a} = \frac{1}{2(3+2a)\pi^2} \sum_{n=1}^{\infty}\left(\left(\frac{\mu}{\id_{\frac{1+a}{2}}}*\sigma_{a}\right)(n)\right)^{2}\frac{1}{n^{3/2}}.  
$$
\end{thm}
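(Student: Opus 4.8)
The plan is to reduce the mean square of $K_r(x;\id_{1+a})$ to the mean square of the error‑term sum appearing in \eqref{K-S-th1}, and then to evaluate that by a Vorono\"i‑type expansion of $\Delta_a$. First I would set
\[
R(x):=\sum_{n\le x}\frac{\mu(n)}{n}\,\Delta_a\!\Big(\frac{x}{n}\Big),
\]
so that, by \eqref{K-S-th1}, $K_r(x;\id_{1+a})=\tfrac1{r+1}R(x)+\Ocal_{r,a}(1)$ and hence
\[
\int_1^T K_r(x;\id_{1+a})^2\,dx=\frac{1}{(r+1)^2}\int_1^T R(x)^2\,dx+\Ocal\!\Big(\int_1^T|R(x)|\,dx\Big)+\Ocal_{r,a}(T).
\]
By Cauchy--Schwarz together with the a priori bound $\int_1^T R(x)^2\,dx\ll_a T^{3/2+a+\epsilon}$ (a consequence of \eqref{K-mean} and the absolute convergence of $\sum_n|\mu(n)|n^{-3/2-\epsilon}$), the middle term is $\ll T^{5/4+a/2+\epsilon}\ll T^{5/4+\epsilon}$, and the last is $\ll T\ll T^{1-a+\epsilon}$ because $a<0$. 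So everything reduces to proving $\int_1^T R(x)^2\,dx=C_{2,a}T^{3/2+a}+\Ocal(T^{5/4+\epsilon})+\Ocal_a(T^{1-a+\epsilon})$.

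Next I would insert into $R$ the truncated Vorono\"i‑type formula for $\Delta_a$ valid for $-1<a<0$, of the shape
\[
\Delta_a(y)=\frac{y^{1/4+a/2}}{\pi\sqrt2}\sum_{j\le N}\frac{\sigma_a(j)}{j^{3/4+a/2}}\cos\!\Big(4\pi\sqrt{jy}-\frac{\pi}{4}\Big)+\Ocal\!\big(y^{\epsilon}+y^{1/2+\epsilon}N^{-1/2}\big),
\]
with $N$ a suitable power of $T$, so that after summation over $n$ the aggregated Vorono\"i error contributes $\ll T^{5/4+\epsilon}$ to $\int_1^T R(x)^2\,dx$. Squaring the resulting main part of $R(x)$, integrating in $x\in[1,T]$, and using $\int_1^T x^{1/2+a}\,dx=\tfrac{2}{3+2a}T^{3/2+a}+\Ocal(1)$ together with $\cos^2\theta=\tfrac12+\tfrac12\cos2\theta$, the pairs $(n,j),(n',j')$ with $jn'=j'n$ (equal oscillation frequencies) produce the term
\[
\frac{T^{3/2+a}}{2(3+2a)\pi^2}\sum_{\substack{n,n',j,j'\ge1\\ jn'=j'n}}\frac{\mu(n)\mu(n')\,\sigma_a(j)\,\sigma_a(j')}{(nn')^{5/4+a/2}(jj')^{3/4+a/2}},
\]
while the remaining pairs ($jn'\neq j'n$) and the Vorono\"i tail must be controlled by the first‑ and second‑derivative tests for $\int x^{1/2+a}\cos(4\pi\sqrt{jx/n})\cos(4\pi\sqrt{j'x/n'})\,dx$ and by standard divisor‑type estimates; I expect their total to be $\ll T^{5/4+\epsilon}+T^{1-a+\epsilon}$.

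Finally I would parametrise the constraint $jn'=j'n$ so that the quadruple series factors into an Euler product, compute the local factor at each prime $p$ using $\sigma_a(p^{k})=(1-p^{(k+1)a})/(1-p^{a})$, and identify the outcome with $\sum_{n\ge1}\big((\frac{\mu}{\id_{(1+a)/2}}*\sigma_a)(n)\big)^{2}n^{-3/2}$; here Ramanujan's identity $\sum_n\sigma_a(n)\sigma_b(n)n^{-s}=\zeta(s)\zeta(s-a)\zeta(s-b)\zeta(s-a-b)/\zeta(2s-a-b)$ and the multiplicativity of $\mu/\id_{(1+a)/2}$ do the bookkeeping, exactly as in the evaluation of $D_1$ and $D_2$ in \cite{K1}. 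This yields $\int_1^T R(x)^2\,dx=C_{2,a}T^{3/2+a}+\cdots$, and dividing by $(r+1)^2$ gives \eqref{sum_th3}; note that the hypothesis $-\tfrac14<a<0$ is exactly what makes the main term $T^{3/2+a}$ dominate both error terms, since $3/2+a>5/4$ and $3/2+a>1-a$ hold precisely then.

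The two steps I expect to be the main obstacles are: (i) pushing the off‑diagonal and Vorono\"i‑truncation estimates to give exactly $\Ocal(T^{5/4+\epsilon})+\Ocal_a(T^{1-a+\epsilon})$ with no loss — this is delicate near $a=-\tfrac14$, where the main term and the $T^{5/4}$‑error nearly coincide, and it is where the full strength of the derivative tests and of \eqref{Peter}, \eqref{K-mean} enters; and (ii) the Euler‑product computation of the constant, which must be carried through uniformly in $a\in(-\tfrac14,0)$ and seen to collapse to the stated $C_{2,a}$.
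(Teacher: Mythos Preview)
Your outline is correct and follows the same route as the paper: reduce to the sum $R(x)$, insert the truncated Vorono\"i expansion \eqref{lem3011} (with $N=y=T^{1-\varepsilon}$, after first splitting off the harmless range $y<n\le x$), square and separate diagonal from off-diagonal, and handle the cross with the $\Ocal(1)$ via Cauchy--Schwarz to produce the $T^{5/4+\varepsilon}$ term. Two points are worth flagging. First, your step (ii) is needlessly elaborate: on the diagonal $jn'=j'n$ the quadruple sum is already, by inspection,
\[
\sum_{k\le y^2}\frac{1}{k^{3/2}}\Bigl(\sum_{\substack{k=nj\\ n,j\le y}}\frac{\mu(n)}{n^{(1+a)/2}}\,\sigma_a(j)\Bigr)^{2},
\]
which is a truncation of $\sum_k\bigl((\mu/\id_{(1+a)/2}*\sigma_a)(k)\bigr)^2 k^{-3/2}$; a tail estimate via Shiu's theorem gives $C_{2,a}+\Ocal(T^{-1/2+\varepsilon})$ with no Euler-product computation needed. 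Second, for the off-diagonal the paper does not rely on a second-derivative test: after the first-derivative test \eqref{lem4011} the remaining double sum is controlled by the Hilbert inequality \eqref{Hilbert}, which is the device that cleanly yields the $T^{1-a+\varepsilon}$ bound; you should make this explicit, since ``standard divisor-type estimates'' alone will not close the sum over nearby frequencies.
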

\begin{thm}  \label{th4}
 For any  real number $T >1$, fixed positive integer  $r$, and fixed number $a$ such that  $-\frac14 < a < 0$, we have
\begin{equation}                                                                                                    
 \label{sum_th4}
\int_{1}^{T}K_{r}(x;\phi_{1+a})^{2}dx = 
  \frac{C_{3,a}}{(r+1)^{2}}  T^{\frac{3}{2}+a}  
 + \Ocal_{r}\left(T^{\frac{5}{4}+\varepsilon}\right) + \Ocal_{r,a}\left(T^{1-a+\varepsilon}\right),                                     
\end{equation}
and 
\begin{equation}                                                                                                     \label{sum_th44}
 \int_{1}^{T}K_{r}(x;\psi_{1+a})^{2}dx = 
  \frac{C_{4,a}}{(r+1)^{2}}  T^{\frac{3}{2}+a}  
 + \Ocal_{r}\left(T^{\frac{5}{4}+\varepsilon}\right)+ \Ocal_{r,a}\left(T^{1-a+\varepsilon}\right),      
\end{equation}
where 
$$
C_{3,a} = \frac{1}{2(3+2a)\pi^2} \sum_{n=1}^{\infty}
\left(\left(\frac{\mu*\mu}{\id_{\frac{1+a}{2}}}*\sigma_{a}\right)(n)\right)^{2}\frac{1}{n^{3/2}},   
$$
and 
$$
C_{4,a} = \frac{1}{2(3+2a)\pi^2} \sum_{n=1}^{\infty}
\left(\left(\frac{\mu*|\mu|}{\id_{\frac{1+a}{2}}}*\sigma_{a}\right)(n)\right)^{2}\frac{1}{n^{3/2}}.    
$$
\end{thm}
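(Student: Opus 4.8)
The plan is to prove \eqref{sum_th4} and \eqref{sum_th44} simultaneously, running the same argument as for Theorem~\ref{th3} with the M\"obius function replaced by $g:=\mu*\mu$ in \eqref{sum_th4} and by $g:=|\mu|*\mu$ in \eqref{sum_th44}. Put $\mathcal S_g(x):=\sum_{n\le x}\frac{g(n)}{n}\Delta_a\!\left(\frac xn\right)$; by \eqref{K-S-th2} and \eqref{K-S-th22} we have $K_r(x;\phi_{1+a})=\frac1{r+1}\mathcal S_{\mu*\mu}(x)+\Ocal_{r,a}((\log x)^2)$ and $K_r(x;\psi_{1+a})=\frac1{r+1}\mathcal S_{|\mu|*\mu}(x)+\Ocal_{r,a}((\log x)^2)$, so it suffices to establish
\[
\int_1^T\mathcal S_g(x)^2\,dx=\frac{T^{\frac32+a}}{2(3+2a)\pi^2}\sum_{m=1}^\infty\frac1{m^{3/2}}\left(\left(\frac{g}{\id_{\frac{1+a}{2}}}*\sigma_a\right)\!(m)\right)^{2}+\Ocal_r\!\left(T^{\frac54+\varepsilon}\right)+\Ocal_{r,a}\!\left(T^{1-a+\varepsilon}\right),
\]
and then transfer to $K_r$: squaring $K_r=\frac1{r+1}\mathcal S_g+\Ocal((\log x)^2)$, the square of the $\Ocal((\log x)^2)$ term integrates to $\Ocal(T(\log T)^4)$, and by Cauchy--Schwarz with the bound $\int_1^T\mathcal S_g^2\ll T^{3/2+a}$ that the display supplies (this is where $a>-\tfrac14$ is used), its cross term with $\mathcal S_g$ is $\ll(\log T)^2T^{1/2}\big(\int_1^T\mathcal S_g^2\big)^{1/2}\ll T^{5/4+\varepsilon}$. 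Taking $g=\mu*\mu$ gives \eqref{sum_th4} with $C_{3,a}$, and $g=|\mu|*\mu$ gives \eqref{sum_th44} with $C_{4,a}$.

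For the displayed identity I would substitute into each $\Delta_a(x/n)$ the truncated Voronoi-type expansion for the generalized Dirichlet divisor problem,
\[
\Delta_a(y)=\frac{y^{\frac14+\frac a2}}{\pi\sqrt2}\sum_{m\le N}\frac{\sigma_a(m)}{m^{\frac34+\frac a2}}\cos\!\left(4\pi\sqrt{my}-\frac\pi4\right)+(\text{error}),\qquad 1\le N\ll y,
\]
the analogue for $\Delta_a$ of the classical formula for $\Delta$; the exponents are the ones forced by \eqref{K-mean}, since $\sum_{m\ge1}\sigma_a(m)^2m^{-3/2-a}=\zeta(3/2)^2\zeta(3/2-a)\zeta(3/2+a)/\zeta(3)$. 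Writing $\mathcal S_g=\mathcal S_g^{\sharp}+\mathcal S_g^{\flat}$ (main Voronoi part plus error) and choosing $N$ an appropriate power of $T$, one checks that $\int_1^T(\mathcal S_g^{\flat})^2\,dx$, and then by Cauchy--Schwarz the cross term $\int_1^T\mathcal S_g^{\sharp}\mathcal S_g^{\flat}\,dx$, contribute only $\Ocal_r(T^{5/4+\varepsilon})+\Ocal_{r,a}(T^{1-a+\varepsilon})$; every estimate here needs nothing beyond the crude bound $|g(n)|\ll n^{\varepsilon}$, valid both for $g=\mu*\mu$ (Dirichlet series $\zeta(s)^{-2}$) and for $g=|\mu|*\mu$ (Dirichlet series $\zeta(2s)^{-1}$), exactly as $|\mu(n)|\ll n^{\varepsilon}$ enters in Theorem~\ref{th3}. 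There remains $\int_1^T(\mathcal S_g^{\sharp})^2\,dx$: expanding the square yields a quadruple sum over $(m_1,n_1),(m_2,n_2)$ with integrand $x^{1/2+a}\cos(4\pi\sqrt{m_1x/n_1}-\tfrac\pi4)\cos(4\pi\sqrt{m_2x/n_2}-\tfrac\pi4)$, and writing the product of cosines as $\tfrac12\cos(\alpha-\beta)+\tfrac12\cos(\alpha+\beta)$, the $(\alpha+\beta)$ terms together with the $(\alpha-\beta)$ terms having $m_1/n_1\ne m_2/n_2$ are non-stationary; the first-derivative test combined with $|m_1n_2-m_2n_1|\ge1$ bounds their total by $\Ocal_r(T^{5/4+\varepsilon})+\Ocal_{r,a}(T^{1-a+\varepsilon})$.

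The main term comes entirely from the diagonal $m_1/n_1=m_2/n_2$. Parametrizing it by the reduced fraction $p/q=m_1/n_1=m_2/n_2$, so $m_i=p\ell_i$, $n_i=q\ell_i$ with $(p,q)=1$ and $\ell_i\ge1$, and using $\tfrac12\int_1^Tx^{1/2+a}\,dx=\tfrac1{2(3/2+a)}T^{3/2+a}+\Ocal(1)$, the diagonal contributes, up to lower-order terms already absorbed above,
\[
\frac{T^{\frac32+a}}{2(3+2a)\pi^2}\sum_{\substack{p,q\ge1\\(p,q)=1}}\frac1{q^{5/2+a}\,p^{3/2+a}}\left(\sum_{\ell\ge1}\frac{g(q\ell)\,\sigma_a(p\ell)}{\ell^{2+a}}\right)^{2}.
\]
The concluding task is to evaluate this arithmetical double sum, and here the specific shape of $g$ finally matters: since $\id_{\frac{1+a}{2}}$ is completely multiplicative, $\frac{g}{\id_{\frac{1+a}{2}}}$ is again the corresponding convolution of copies of $\frac{\mu}{\id_{\frac{1+a}{2}}}$ and $\frac{|\mu|}{\id_{\frac{1+a}{2}}}$, and since $\sigma_a$ is multiplicative the double sum factors into an Euler product which, after a local computation at each prime, collapses to $\sum_{m\ge1}m^{-3/2}\big(\big(\frac{g}{\id_{\frac{1+a}{2}}}*\sigma_a\big)(m)\big)^2$ --- this produces $C_{3,a}$ for $g=\mu*\mu$ and $C_{4,a}$ for $g=|\mu|*\mu$, both series converging absolutely for $a\in(-1,0)$. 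I expect the two genuinely delicate points to be this Euler-product collapse of the diagonal constant and the uniform handling of the many oscillatory off-diagonal ranges; the hypothesis $-\tfrac14<a<0$ is used only at the very end, to make both $T^{5/4+\varepsilon}$ and $T^{1-a+\varepsilon}$ of strictly smaller order than $T^{3/2+a}$. Apart from the replacement $\mu\mapsto g$, the whole argument follows the proof of Theorem~\ref{th3} step by step.
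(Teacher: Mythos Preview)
Your overall architecture --- Voronoi-type expansion for $\Delta_a$, squaring, diagonal/off-diagonal split, Cauchy--Schwarz to absorb the $O((\log x)^2)$ from \eqref{K-S-th2}--\eqref{K-S-th22} --- is exactly the paper's, and the reduction at the start is correct. Two places deviate from the paper's argument and deserve attention.

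\emph{Diagonal.} You parametrize $m_1/n_1=m_2/n_2$ by a reduced fraction $p/q$ and then propose to collapse the resulting double sum into $\sum_m m^{-3/2}\big((\tfrac{g}{\id_{(1+a)/2}}*\sigma_a)(m)\big)^2$ by an Euler product and a ``local computation at each prime''. This is unnecessary and in fact awkward: since $g(q\ell)\ne g(q)g(\ell)$ when $(q,\ell)>1$, the inner $\ell$-sum does not factor cleanly. The paper avoids this entirely by regrouping the diagonal according to the single integer $k=n_1m_2=n_2m_1$; one sees directly that the diagonal equals $\sum_{k\le y^2} g_0(k;y)^2/k^{3/2}$ with $g_0(k;y)=\sum_{\substack{k=ml\\m,l\le y}} g(m)m^{-(1+a)/2}\sigma_a(l)$, which is already the truncated version of the desired constant. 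The truncation is then removed by a tail estimate $\sum_{k>y}\tilde g(k)^2/k^{3/2}\ll y^{-1/2+\varepsilon}$ coming from Shiu's theorem applied to the non-negative majorant $\tilde g=\frac{\tau}{\id_{(1+a)/2}}*\sigma_a$ (this replaces your hand-waved ``up to lower-order terms already absorbed above''). No Euler product or prime-by-prime calculation is needed.

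\emph{Off-diagonal.} The first-derivative test alone does not close the estimate: after it produces the factor $\big|\sqrt{n_1/m_1}-\sqrt{n_2/m_2}\big|^{-1}$, one still has to sum a bilinear expression in which many pairs have very small frequency gap. The paper passes to the variables $n=n_1m_2$, $m=n_2m_1$ (bounded by $|\,(\sigma_a*\tau)\,|$), splits according to whether $|\sqrt{n}-\sqrt{m}|$ is large or small, and for the near-diagonal range invokes Hilbert's inequality (Lemma~\ref{lem70}) to get $\sum_{m\ne n}a_m\overline{a_n}/(m-n)\ll\sum_n|a_n|^2$; this is what actually delivers the bound $T^{1-a+\varepsilon}$. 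Your sketch (``first-derivative test combined with $|m_1n_2-m_2n_1|\ge1$'') is missing this step.
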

From the above, it seems  difficult to improve the $\Ocal$-terms  in our theorem,  since  the error terms of \eqref{K-S-th2} and \eqref{K-S-th22} are in a  weak form. In view of this, it is useful to try to determine a sharper form of \eqref{K-S-th2} and \eqref{K-S-th22}.
\subsection{Weighted averages of Cohen-Ramanujan  sums} 

For a positive integer $s$, the generalized $\gcd$ function  $(j,k^{s})_{s}$ is defined to give the largest $d\in\mathbb{N}$ such that $d|k$ and $d^{s}|j$.
Therefore, $(k,j^{1})_{1}=(k,j)$ is the usual $\gcd$ of two integers. 
Let $c_{k}(j)$ denote the  Ramanujan sum defined by 
$$ 
c_{k}(j)=\sum_{d|\gcd(j,k)}d\mu\left(\frac{k}{d}\right).  
$$  
This function was first introduced by Ramanujan in 1918. In more recent years, various generalizations of the Ramanujan sum have been constructed. 
One of the most known generalizations of $c_{k}(j)$ was given by Cohen \cite{C1},\cite{C2},\cite{C3} defined as follows
\begin{align*}
c_{k}^{(s)}(j)& := \sum_{d^s |(j,k^{s})_{s}}d^{s}\mu\left(\frac{k}{d}\right).  
\end{align*} 
In 2017,  Namboothiri \cite{N} derived certain  identities for weighted averages of  Cohen sums 
with weights concerning logarithms, the Gamma function and the Bernoulli polynomials, and others. 
As another generalization of the Ramanujan sums, Anderson and  Apostol \cite{AA}  introduced the function 
 $s_{k}(j)$ defined by the identity   
$$
s_{k}^{}(j)=\sum_{d|\gcd(k,j)}f(d)g\left(\frac{k}{d}\right) 
$$ 
with  any arithmetical functions $f$ and $g$. This latter is generalized to  
\begin{equation}                                  \label{ASDFG}
s_{k}^{(s)}(j) := \sum_{d^s |(j,k^{s})_s}f(d)g \left(\frac{k}{d}\right)  
 = \sum_{\substack{d|k  \\d^{s}|j}}f(d)g\left(\frac{k}{d}\right).  
\end{equation}
In \cite{K2}, the first author studied the partial sum of the weighted average of $s_{k}^{(s)}(j)$ and proved that  
\begin{multline*}
 \sum_{k\leq x}  \frac{1}{k^{s(r+1)}} \sum_{j=1}^{k^s} j^{r} s_{k}^{(s)}(j)    
  = \frac{1}{2}\sum_{dl\leq x}\frac{f(d)}{d^s}\frac{g(l)}{l^s}  \\
  + \frac{1}{r+1}\sum_{dl\leq x}\frac{f(d)}{d^s} g(l) 
    + \frac{1}{r+1}\sum_{m=1}^{[r/2]}\binom{r+1}{2m} B_{2m}\sum_{dl\leq x}\frac{f(d)}{d^s}\frac{g(l)}{l^{2ms}}.    
\end{multline*}
Now  we take $f*\mu$ in place of $f$ and $g={\bf 1}$ into the above   
 and use the identity  $(f*\mu)*\1=f$  to obtain 
 \begin{align}                                                                                                     \label{kiuchi}
 M_{r}^{(s)}(x;f) 
& := \sum_{k\leq x}  \frac{1}{k^{s(r+1)}} \sum_{j=1}^{k^s} j^{r} \sum_{\substack{d|k \\ d^{s}|j}}(f*\mu)(d)      \\
&  = \frac{1}{2} \sum_{n\leq x} \frac{f(n)}{n^s}   + \frac{1}{r+1}\sum_{dl\leq x}\frac{(f*\mu)(d)}{d^s}  \nonumber  \\
& \qquad  \qquad    
+ \frac{1}{r+1} \sum_{m=1}^{[r/2]} \binom{r+1}{2m} B_{2m} \sum_{dl\leq x} \frac{(f*\mu)(d)}{d^s} \frac{1}{l^{2ms}}.       \nonumber
\end{align} 
In this paper, we provide two applications of the identity  \eqref{kiuchi}.
First  one is in the case when $f={\rm id}_{s+a}$. We have   
\begin{thm}  \label{th5}
For any  real number $x >1$, fixed  positive integers $r$, $s\geq 2$,  and fixed  number $a$ such that $- 1 < a < 0$,   we have  
\begin{multline}                                                          \label{sum_th5}
  M_{r}^{(s)}(x;{\rm id}_{s+a}) 
=\frac{\zeta(1-a)}{(r+1)\zeta(s+1)}x  + \frac{1}{2(1+a)}x^{1+a}  \\
 + \frac{x^{1+a} }{(r+1)(1+a)\zeta(s+a+1)}\left(\zeta(1+a) +  \sum_{m=1}^{[r/2]}\binom{r+1}{2m} B_{2m}\zeta(2ms+a+1)\right) \\
+  K_{r}^{(s)}(x;\id_{s+a}), 
\end{multline}
where  
\begin{multline}                                                             \label{K-S-th5}
 K_{r}^{(s)}(x;{\rm id}_{s+a})  
= \frac{1}{r+1} \sum_{n\leq x}\frac{\mu(n)}{n^s}\Delta_{a}\left(\frac{x}{n}\right) \\+ 
   \frac{1}{r+1}\sum_{m=1}^{[r/2]}\binom{r+1}{2m}B_{2m} \frac{\zeta(-a)\zeta(2ms)}{\zeta(s)} 
 + \frac{\zeta(-a)}{2}\\ - \frac{\zeta(-a)}{2(r+1)\zeta(s)} + \Ocal_{r,s,a}\left(x^{a}\right).       
\end{multline}
\end{thm}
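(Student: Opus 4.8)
The plan is to feed $f=\id_{s+a}$ into the identity \eqref{kiuchi} and evaluate, one at a time, the three sums on its right-hand side; the argument runs parallel to the proof of Theorem~\ref{th1}, now with the exponent $s\ge 2$ in the denominators. Since $\id_{s+a}(n)/n^s = n^a$ and $\id_{s+a}*\mu=\phi_{s+a}$ is the Jordan totient function, \eqref{kiuchi} becomes
\[
M_r^{(s)}(x;\id_{s+a}) = \frac12\sum_{n\le x}n^a + \frac{1}{r+1}\sum_{dl\le x}\frac{\phi_{s+a}(d)}{d^s} + \frac{1}{r+1}\sum_{m=1}^{[r/2]}\binom{r+1}{2m}B_{2m}\sum_{dl\le x}\frac{\phi_{s+a}(d)}{d^s}\,\frac{1}{l^{2ms}}.
\]
The first sum is immediate: by Euler--Maclaurin, $\sum_{n\le x}n^a = \frac{x^{1+a}}{1+a}+\zeta(-a)+\Ocal(x^a)$ for $-1<a<0$, which already yields the term $\tfrac{1}{2(1+a)}x^{1+a}$ of \eqref{sum_th5} and the constant $\tfrac{\zeta(-a)}{2}$ in \eqref{K-S-th5}.

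For the second sum I would use the convolution identity $\dfrac{\phi_{s+a}}{\id^s}=\id_a*\dfrac{\mu}{\id^s}$, which is a direct rewriting of $\phi_{s+a}=\id_{s+a}*\mu$. Putting $d=ef$ and grouping by $f$ gives
\[
\sum_{dl\le x}\frac{\phi_{s+a}(d)}{d^s} = \sum_{f\le x}\frac{\mu(f)}{f^s}\sum_{n\le x/f}\sigma_a(n),
\]
and \eqref{Delta-a} can be inserted for the inner sum. Summing over $f$ term by term and using $\sum_{f\le x}\mu(f)f^{-w}=\zeta(w)^{-1}+\Ocal(x^{1-w})$ for the exponents $w\in\{s+1,\,s+a+1,\,s\}$, all exceeding $1$ because $s\ge 2$, one finds
\[
\sum_{dl\le x}\frac{\phi_{s+a}(d)}{d^s} = \frac{\zeta(1-a)}{\zeta(s+1)}x + \frac{\zeta(1+a)}{(1+a)\zeta(s+a+1)}x^{1+a} - \frac{\zeta(-a)}{2\zeta(s)} + \sum_{f\le x}\frac{\mu(f)}{f^s}\Delta_a\!\left(\frac{x}{f}\right) + \Ocal_{s,a}(x^a),
\]
since $s\ge 2$ forces each leftover power $x^{1-s}$ to be $\Ocal(x^a)$. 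Dividing by $r+1$ produces the $x$-term of \eqref{sum_th5}, the $\zeta(1+a)$-summand in its $x^{1+a}$-bracket, and the main part $\tfrac{1}{r+1}\sum_{f\le x}\mu(f)f^{-s}\Delta_a(x/f)$ of \eqref{K-S-th5} together with the term $-\tfrac{\zeta(-a)}{2(r+1)\zeta(s)}$ there.

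For the third sum I would first record, by the same convolution identity together with \eqref{Delta-a}, that
\[
\sum_{d\le y}\frac{\phi_{s+a}(d)}{d^s} = \frac{y^{1+a}}{(1+a)\zeta(s+a+1)} + \frac{\zeta(-a)}{\zeta(s)} + \Ocal(y^a).
\]
For each fixed $m$ I would then write $\sum_{dl\le x}\phi_{s+a}(d)d^{-s}l^{-2ms} = \sum_{l\le x}l^{-2ms}\sum_{d\le x/l}\phi_{s+a}(d)d^{-s}$ and sum over $l$; since $2ms\ge 4$, the three series $\sum_l l^{-2ms-1-a}$, $\sum_l l^{-2ms}$ and $\sum_l l^{-2ms-a}$ converge and the tail errors are of size $x^{1-2ms}=\Ocal(x^a)$. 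This gives
\[
\sum_{dl\le x}\frac{\phi_{s+a}(d)}{d^s}\,\frac{1}{l^{2ms}} = \frac{\zeta(2ms+a+1)}{(1+a)\zeta(s+a+1)}x^{1+a} + \frac{\zeta(-a)\zeta(2ms)}{\zeta(s)} + \Ocal_{s,a,m}(x^a),
\]
whose weighted sum over $1\le m\le[r/2]$ contributes exactly the remaining $\zeta(2ms+a+1)$-terms of the $x^{1+a}$-bracket in \eqref{sum_th5} and the $\zeta(-a)\zeta(2ms)/\zeta(s)$-terms of \eqref{K-S-th5}.

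Adding the three contributions reproduces \eqref{sum_th5} and \eqref{K-S-th5}. No bound on $\Delta_a$ itself is invoked, so there is no genuine analytic difficulty; the main point requiring care is organisational, namely checking that every secondary error term --- the M\"{o}bius tails $x^{1-s}$, the zeta tails $x^{1-2ms}$, and the $\Ocal(y^a)$ coming from the partial sums of $\phi_{s+a}/\id^s$ --- is dominated by $x^a$ for $-1<a<0$ and $s\ge 2$, and that the various main terms recombine into precisely the stated shape.
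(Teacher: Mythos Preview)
Your proposal is correct and follows essentially the same route as the paper: plug $f=\id_{s+a}$ into \eqref{kiuchi}, evaluate the first sum by \eqref{lem1-apo}, the second via \eqref{Delta-a} after writing it as $\sum_{f\le x}\mu(f)f^{-s}\sum_{n\le x/f}\sigma_a(n)$, and the third term by term in $m$. The only cosmetic difference is in the bookkeeping of the third sum: the paper relabels it as $\sum_{dl\le x}\bigl(\tfrac{\mu}{\id_s}*\tfrac{1}{\id_{2ms}}\bigr)(d)\,l^{a}$ and applies \eqref{lem1-apo} to the factor $l^{a}$, whereas you first record the asymptotic for $\sum_{d\le y}\phi_{s+a}(d)d^{-s}$ and then sum over $l^{-2ms}$; these are two orderings of the same triple sum and produce identical main and error terms.
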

Using  (\ref{Peter}),  the error  term  $K_{r}^{(s)}(x;{\rm id}_{s+a})$    is estimated by 
$\Ocal_{r,s}\left(x^{\frac{1+a}{3}+\varepsilon}\right)$.  Then   we get   
$$
\lim_{x\to\infty}\frac{ M_{r}^{(s)}(x;{\rm id}_{s+a}) }{x}    =  \frac{\zeta(1-a)}{(r+1)\zeta(s+1)}.   
$$
The second  application of the formula \eqref{kiuchi}  is given in the next subsection.  


\subsection{Evaluations of  $M_{r}^{(s)}(x;h*{\rm id}_{s}) $ and  $M_{r}^{(s)}(x;h*{\rm id}_{s+a}) $}

Let $H_{h}(\alpha)$  be the Dirichlet series  defined by 
$$
H_{h}(\alpha)=\sum_{n=1}^{\infty}\frac{h(n)}{n^{\alpha}},
$$
which  is   absolutely convergent  in the half-plane  ${{\rm Re}~\alpha} > \sigma_{c}$. 
Then the first  derivative $H^{'}_{h}(\alpha)$ of $H_{h}(\alpha)$  is represented, in the same half-plane, by the Dirichlet series  
$$
H_{h}^{'}(\alpha) = - \sum_{n=1}^{\infty}\frac{h(n)\log n}{n^\alpha}. 
$$
Taking  $f=h*\id_{s}$ and then $f=h* \id_{s+a}$,  for any arithmetical function  $h$, into  (\ref{kiuchi}), we see that    
\begin{multline}                                                             \label{Sumaia-K}
M_{r}^{(s)}(x;h*\id_{s})   
 = \frac{1}{2}\sum_{dl\leq x}\frac{h(d)}{d^s} 
+ \frac{1}{r+1}\sum_{dl\leq x} \frac{(h*\mu)(d)}{d^s} \tau(l)  \\
+ \frac{1}{r+1}\sum_{m=1}^{[r/2]}\binom{r+1}{2m} B_{2m}\sum_{dl\leq x}\frac{(h*\phi_{s})(d)}{d^s}\frac{1}{l^{2ms}}, 
\end{multline}
and         
\begin{multline}                                                         
\label{Sumaia-K1}
 M_{r}^{(s)}(x;h*\id_{s+a})   
 = \frac{1}{2}\sum_{dl\leq x}\frac{h(d)}{d^s} l^a
+ \frac{1}{r+1}\sum_{dl\leq x} \frac{(h*\mu)(d)}{d^s} \sigma_{a}(l)  \\
+ \frac{1}{r+1}\sum_{m=1}^{[r/2]}\binom{r+1}{2m} B_{2m}\sum_{dl\leq x}\frac{(h*\phi_{s+a})(d)}{d^s}\frac{1}{l^{2ms}}. 
\end{multline}
Under certain  conditions of the arithmetical function  $h$,  we  provide the following results.   
\begin{thm}  \label{th71}
Assume that $h(n)$ is estimated by $O\left(n^{\varepsilon}\right)$ 
for any small  number $\varepsilon >0$.  
For any  real number $x >1$,  fixed positive  integers $r$ and $s \geq 2$,    we have
\begin{multline}                                                              \label{sum_th770}
 M_{r}^{(s)}(x;h*\id_{s})   
 = \frac{H_{h}(s+1)}{(r+1)\zeta(s+1)}x\log x  
  + \frac{H_{h}(s+1)}{(r+1)\zeta(s+1)} \times \\
\times \left(2\gamma -1 -  \frac{\zeta'(s+1)}{\zeta(s+1)} + \frac{H^{'}_{h}(s+1)}{H_{h}(s+1)} 
+ \sum_{m=1}^{[r/2]}\binom{r+1}{2m} B_{2m}\zeta(2ms+1)\right)x  \\
 + \frac{H_{h}(s+1)}{2} x   +  K_{r}^{(s)}(x;h*\id_{s}),  
\end{multline}
where   
\begin{multline}                                                              \label{K-S-th771}
K_{r}^{(s)}(x;h*\id_{s}) 
= \frac{1}{r+1} \sum_{n\leq x}\frac{(h*\mu)(n)}{n^s}\Delta_{}\left(\frac{x}{n}\right)
 -  \frac{H_{h}(s)}{2}  \\
- \frac{H_{h}(s)}{2(r+1)\zeta(s)}\sum_{m=1}^{[r/2]}\binom{r+1}{2m}B_{2m}\zeta(2ms) 
- \sum_{d\leq x}\frac{h(d)}{d^s}\vartheta\left(\frac{x}{d}\right) + \Ocal_{r,s}\left(x^{1-s+\varepsilon}\right). 
\end{multline}
\end{thm}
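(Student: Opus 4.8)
The plan is to expand $M_{r}^{(s)}(x;h*\id_s)$ by means of \eqref{Sumaia-K} as $S_1+S_2+S_3$ with
\[
S_1=\tfrac12\sum_{dl\le x}\frac{h(d)}{d^{s}},\qquad
S_2=\frac1{r+1}\sum_{dl\le x}\frac{(h*\mu)(d)}{d^{s}}\,\tau(l),
\]
\[
S_3=\frac1{r+1}\sum_{m=1}^{[r/2]}\binom{r+1}{2m}B_{2m}\sum_{dl\le x}\frac{(h*\phi_{s})(d)}{d^{s}}\,\frac1{l^{2ms}},
\]
and then to evaluate each $S_i$ separately, routing the genuinely growing terms into \eqref{sum_th770} and all remaining contributions into $K_{r}^{(s)}(x;h*\id_s)$. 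Throughout, the hypothesis $h(n)=O(n^{\varepsilon})$ is used in the form $(h*\mu)(n)\ll n^{\varepsilon}$ and $(h*\phi_{s})(n)/n^{s}\ll n^{\varepsilon}$, which, together with $s\ge 2$, ensures that the Dirichlet series $H_h(\alpha)$, $\sum_{n}(h*\mu)(n)n^{-\alpha}=H_h(\alpha)/\zeta(\alpha)$, $\sum_{n}(h*\phi_{s})(n)n^{-\alpha}=H_h(\alpha)\zeta(\alpha-s)/\zeta(\alpha)$ and their $\alpha$-derivatives converge absolutely for $\operatorname{Re}\alpha\ge 2$, and that every tail of the form $\sum_{d>x}a(d)d^{-s-1}$ with $a(d)\ll d^{\varepsilon}$ is $\ll x^{-s+\varepsilon}$, and similarly $\sum_{d>x}a(d)d^{-s}\ll x^{1-s+\varepsilon}$. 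As $s\ge 2$ these are $o(1)$, and they are the source of the error $O_{r,s}(x^{1-s+\varepsilon})$ in \eqref{K-S-th771}.

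The piece $S_2$ supplies the dominant terms. First I would insert the Dirichlet divisor estimate \eqref{Dirichlet} with $y=x/d$, which turns $S_2$ into
\begin{multline*}
\frac{x\log x}{r+1}\sum_{d\le x}\frac{(h*\mu)(d)}{d^{s+1}}-\frac{x}{r+1}\sum_{d\le x}\frac{(h*\mu)(d)\log d}{d^{s+1}}\\
+\frac{(2\gamma-1)x}{r+1}\sum_{d\le x}\frac{(h*\mu)(d)}{d^{s+1}}+\frac{1}{r+1}\sum_{d\le x}\frac{(h*\mu)(d)}{d^{s}}\Delta\Bigl(\frac xd\Bigr).
\end{multline*}
Completing the first three inner sums to infinity (the completion errors being $O(x^{1-s+\varepsilon})$, after absorbing a logarithm into $\varepsilon$) and using
\begin{gather*}
\sum_{d\ge1}\frac{(h*\mu)(d)}{d^{s+1}}=\frac{H_h(s+1)}{\zeta(s+1)},\\
\sum_{d\ge1}\frac{(h*\mu)(d)\log d}{d^{s+1}}=\frac{H_h(s+1)\zeta'(s+1)}{\zeta(s+1)^{2}}-\frac{H'_h(s+1)}{\zeta(s+1)},
\end{gather*}
yields the term $\frac{H_h(s+1)}{(r+1)\zeta(s+1)}x\log x$ of \eqref{sum_th770} and the part of the coefficient of $x$ there which does not involve the $B_{2m}$; the surviving piece $\frac1{r+1}\sum_{n\le x}\frac{(h*\mu)(n)}{n^{s}}\Delta(x/n)$ is precisely the leading term of $K_{r}^{(s)}(x;h*\id_s)$ in \eqref{K-S-th771}.

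For $S_1$ I would write $\lfloor x/d\rfloor=x/d-\{x/d\}$ and then $\{t\}=\tfrac12+\bigl(\{t\}-\tfrac12\bigr)$; completing $\sum_{d\le x}h(d)d^{-s-1}$ to $H_h(s+1)$ gives the term $\tfrac{H_h(s+1)}{2}x$ of \eqref{sum_th770}, while completing $\sum_{d\le x}h(d)d^{-s}$ to $H_h(s)$ gives a constant multiple of $H_h(s)$ together with a bounded oscillating sawtooth sum. For $S_3$ the decisive step is to write $h*\phi_{s}=(h*\mu)*\id_{s}$, which makes the variable coming from $\id_{s}$ a free summation variable:
\begin{multline*}
\sum_{dl\le x}\frac{(h*\phi_{s})(d)}{d^{s}}\frac1{l^{2ms}}
=\sum_{al\le x}\frac{(h*\mu)(a)}{a^{s}l^{2ms}}\Bigl\lfloor\frac{x}{al}\Bigr\rfloor\\
=x\sum_{al\le x}\frac{(h*\mu)(a)}{a^{s+1}l^{2ms+1}}-\sum_{al\le x}\frac{(h*\mu)(a)}{a^{s}l^{2ms}}\Bigl\{\frac{x}{al}\Bigr\}.
\end{multline*}
Completing the absolutely convergent double sum to $\tfrac{H_h(s+1)}{\zeta(s+1)}\zeta(2ms+1)$ and again separating the mean value $\tfrac12$ of the fractional part in the last sum (whose constant part is $\tfrac{H_h(s)}{2\zeta(s)}\zeta(2ms)$, the rest bounded and oscillating), then summing over $m$ against $\binom{r+1}{2m}B_{2m}$, produces the remaining part $\tfrac{H_h(s+1)}{(r+1)\zeta(s+1)}\sum_{m}\binom{r+1}{2m}B_{2m}\zeta(2ms+1)$ of the coefficient of $x$ in \eqref{sum_th770} and the constant $-\tfrac{H_h(s)}{2(r+1)\zeta(s)}\sum_{m}\binom{r+1}{2m}B_{2m}\zeta(2ms)$ of \eqref{K-S-th771}. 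Adding $S_1+S_2+S_3$, assembling the constants produced by $S_1$ and $S_3$ into the two constant terms of \eqref{K-S-th771}, gathering the bounded sawtooth remainders into the single term $-\sum_{d\le x}\frac{h(d)}{d^{s}}\vartheta(x/d)$, and discarding the $O_{r,s}(x^{1-s+\varepsilon})$ errors, yields \eqref{sum_th770} with $K_{r}^{(s)}(x;h*\id_s)$ as in \eqref{K-S-th771}.

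The difficulty is not an isolated hard estimate but the bookkeeping: each $S_i$ contributes, besides its main term, (i) an absolutely convergent constant obtained by completing a Dirichlet series to infinity, (ii) a bounded but genuinely oscillating sawtooth sum, and (iii) a negligible tail $O(x^{1-s+\varepsilon})$, the negligibility resting on $s\ge 2$. One has to check that the constants from $S_1$ and $S_3$ reassemble into the two constant terms displayed in \eqref{K-S-th771}, and that the several bounded sawtooth remainders coalesce into the one term $-\sum_{d\le x}\frac{h(d)}{d^{s}}\vartheta(x/d)$ recorded there. The only input from analytic number theory beyond elementary summation is the classical bound $\Delta(y)=O(y^{\theta+\varepsilon})$ of \eqref{Dirichlet}, and even that is used only afterwards, to confirm that the surviving error in \eqref{sum_th770} is of smaller order than its main terms; no cancellation in $h$ beyond the trivial bound $h(n)=O(n^{\varepsilon})$ is used anywhere.
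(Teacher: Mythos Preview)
Your plan is essentially the paper's own: the same three–term decomposition $S_1+S_2+S_3$ coming from \eqref{Sumaia-K}, the same treatment $\lfloor x/d\rfloor = x/d-\tfrac12-\vartheta(x/d)$ for $S_1$, the Dirichlet divisor formula \eqref{Dirichlet} for $S_2$, and the key identity $h*\phi_s=(h*\mu)*\id_s$ for $S_3$. The only cosmetic difference is in $S_3$: the paper regroups the two free inner variables into $\sigma_{-2ms}$ and quotes
\[
\sum_{l\le y}\sigma_{-2ms}(l)=\zeta(2ms+1)\,y-\frac{\zeta(2ms)}{2}+O_{m,s}\bigl(y^{1-2ms}\bigr),
\]
whereas you unfold the floor function directly; both routes produce the same main terms and the same constants $-\tfrac{H_h(s)}{2}$ and $-\tfrac{H_h(s)}{2(r+1)\zeta(s)}\sum_m\binom{r+1}{2m}B_{2m}\zeta(2ms)$.

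There is one slip in your final assembly. The bounded sawtooth you leave behind in $S_3$, namely
\[
-\frac{1}{r+1}\sum_{m=1}^{[r/2]}\binom{r+1}{2m}B_{2m}\sum_{al\le x}\frac{(h*\mu)(a)}{a^{s}l^{2ms}}\,\vartheta\!\left(\frac{x}{al}\right),
\]
carries different arithmetic weights from the $S_1$ sawtooth $-\sum_{d\le x}\frac{h(d)}{d^s}\vartheta(x/d)$ and does \emph{not} ``coalesce'' with it into a single sum; there is no identity merging them. In the paper this extra piece is not kept explicitly but is absorbed into the $O_{r,s}(x^{1-s+\varepsilon})$ via the error term of the $\sigma_{-2ms}$ partial-sum formula above. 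In your write-up you should do the same---retain only the $S_1$ sawtooth in \eqref{K-S-th771} and argue that the $S_3$ oscillation falls under the stated error---rather than assert a merger that does not hold.
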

\begin{thm}  \label{th72}
Assume that $h(n)$ is estimated by $O\left(n^{\varepsilon}\right)$ 
for any small  number $\varepsilon >0$.  
For any  real number $x >1$,   fixed positive  integers $r$ and $s \geq 2$,    we have
\begin{multline}                                                           \label{sum_th777}
 M_{r}^{(s)}(x;h*\id_{s+a})   
 = \frac{\zeta(1-a)H_{h}(s+1)}{(r+1)\zeta(s+1)}x + \frac{H_{h}(s+a+1)}{(r+1)(1+a)\zeta(s+a+1)} \times  \\
 \times \left(\zeta(1+a) +  \sum_{m=1}^{[r/2]}\binom{r+1}{2m} B_{2m}\zeta(2ms+a+1)\right)x^{1+a}  \\
 + \frac{H_{h}(s+a+1)}{2(1+a)}x^{1+a} +  K_{r}^{(s)}(x;h*\id_{s+a}),
\end{multline}
with any fixed  number $a$ such that $-1<a<0$,   
where  
\begin{multline}                                                             \label{K-S-th777}
  K_{r}^{(s)}(x;h*\id_{s+a}) 
= \frac{1}{r+1} \sum_{n\leq x}\frac{(h*\mu)(n)}{n^s}\Delta_{a}\left(\frac{x}{n}\right) + \frac{\zeta(-a)H_{h}(s)}{2}  \\ 
+ \frac{\zeta(-a)H_{h}(s)}{(r+1)\zeta(s)}\left(\sum_{m=1}^{[r/2]}\binom{r+1}{2m}B_{2m}\zeta(2ms) -\frac12\right) 
  + \Ocal_{s,r,a}\left(x^{a}\right).  
\end{multline}
\end{thm}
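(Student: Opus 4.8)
The plan is to start from identity~\eqref{Sumaia-K1}, which already splits $M_{r}^{(s)}(x;h*\id_{s+a})$ into three sums, and to evaluate each of them asymptotically. The elementary inputs I will need are: for $-1<a<0$ the Euler--Maclaurin estimate $\sum_{l\le y}l^{a}=\frac{y^{1+a}}{1+a}+\zeta(-a)+\Ocal(y^{a})$; for any $c>1$ the estimate $\sum_{l\le y}l^{-c}=\zeta(c)+\Ocal(y^{1-c})$; the hypothesis $h(n)=\Ocal(n^{\varepsilon})$, which gives $(h*\mu)(n)=\Ocal(n^{\varepsilon})$ and, for $\alpha>1$, the truncation estimates $\sum_{d\le y}\frac{h(d)}{d^{\alpha}}=H_{h}(\alpha)+\Ocal(y^{1-\alpha+\varepsilon})$ and $\sum_{d\le y}\frac{(h*\mu)(d)}{d^{\alpha}}=\frac{H_{h}(\alpha)}{\zeta(\alpha)}+\Ocal(y^{1-\alpha+\varepsilon})$; the expansion~\eqref{Delta-a} of $\sum_{l\le y}\sigma_{a}(l)$; and the convolution identity $h*\phi_{s+a}=(h*\mu)*\id_{s+a}$, coming from $\phi_{s+a}=\id_{s+a}*\mu$. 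Since $s\ge2$ and $-1<a<0$ we have $s+a>1$, $s>1$ and $2ms+a>3$, so every Dirichlet series occurring below is evaluated in its region of absolute convergence; moreover $x^{1-s+\varepsilon}\ll x^{a}$ once $\varepsilon$ is small, so all such remainders will be absorbed into the final $\Ocal_{r,s,a}(x^{a})$.

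For the first sum I would write $\frac12\sum_{dl\le x}\frac{h(d)}{d^{s}}l^{a}=\frac12\sum_{d\le x}\frac{h(d)}{d^{s}}\sum_{l\le x/d}l^{a}$ and insert the Euler--Maclaurin estimate; the leading piece $\frac{x^{1+a}}{2(1+a)}\sum_{d\le x}\frac{h(d)}{d^{s+1+a}}$ contributes $\frac{H_{h}(s+a+1)}{2(1+a)}x^{1+a}$, the $\zeta(-a)$-piece contributes the constant $\frac{\zeta(-a)H_{h}(s)}{2}$, and the remaining term is $\Ocal(x^{a})$ because $\sum_{d}h(d)d^{-s-a}$ converges. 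For the second sum I would fix $d$, substitute~\eqref{Delta-a} for $\sum_{l\le x/d}\sigma_{a}(l)$, and expand it into four pieces; by the truncation estimates for $h*\mu$ these produce, respectively, the main term $\frac{\zeta(1-a)H_{h}(s+1)}{(r+1)\zeta(s+1)}x$, the main term $\frac{\zeta(1+a)H_{h}(s+a+1)}{(r+1)(1+a)\zeta(s+a+1)}x^{1+a}$, the constant $-\frac{\zeta(-a)H_{h}(s)}{2(r+1)\zeta(s)}$, and the unevaluated term $\frac1{r+1}\sum_{n\le x}\frac{(h*\mu)(n)}{n^{s}}\Delta_{a}(x/n)$, each with remainder $\Ocal(x^{1-s+\varepsilon})$.

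The third (Bernoulli) sum is the only part needing a little care, since $(h*\phi_{s+a})(n)\,n^{-s}\asymp n^{a}$ is not summable. Here I would use $h*\phi_{s+a}=(h*\mu)*\id_{s+a}$, equivalently $\frac{(h*\phi_{s+a})(d)}{d^{s}}=\bigl(\tfrac{h*\mu}{\id_{s}}*\id_{a}\bigr)(d)$, so that for fixed $l$
\begin{align*}
\sum_{d\le x/l}\frac{(h*\phi_{s+a})(d)}{d^{s}}
&=\sum_{c\le x/l}\frac{(h*\mu)(c)}{c^{s}}\sum_{e\le x/(lc)}e^{a}\\
&=\frac{H_{h}(s+a+1)}{(1+a)\zeta(s+a+1)}\Bigl(\frac{x}{l}\Bigr)^{1+a}+\frac{\zeta(-a)H_{h}(s)}{\zeta(s)}+\Ocal\Bigl(\bigl(\tfrac{x}{l}\bigr)^{a}\Bigr),
\end{align*}
by applying Euler--Maclaurin to the $e$-sum and the truncation estimate for $h*\mu$ to the $c$-sum (the $\Ocal(y^{1-s+\varepsilon})$ that arises being dominated by $\Ocal(y^{a})$). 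Summing against $l^{-2ms}$ and using $\sum_{l\le y}l^{-c}=\zeta(c)+\Ocal(y^{1-c})$ (valid as $2ms+a>1$), I obtain for each $m$
\begin{align*}
\sum_{dl\le x}\frac{(h*\phi_{s+a})(d)}{d^{s}}\frac1{l^{2ms}}
&=\frac{H_{h}(s+a+1)\zeta(2ms+a+1)}{(1+a)\zeta(s+a+1)}x^{1+a}\\
&\qquad+\frac{\zeta(-a)H_{h}(s)\zeta(2ms)}{\zeta(s)}+\Ocal_{s,a}(x^{a}).
\end{align*}
Multiplying by $\frac1{r+1}\binom{r+1}{2m}B_{2m}$ and summing over $1\le m\le[r/2]$ yields the main term $\frac{H_{h}(s+a+1)}{(r+1)(1+a)\zeta(s+a+1)}\bigl(\sum_{m}\binom{r+1}{2m}B_{2m}\zeta(2ms+a+1)\bigr)x^{1+a}$ together with the constant $\frac{\zeta(-a)H_{h}(s)}{(r+1)\zeta(s)}\sum_{m}\binom{r+1}{2m}B_{2m}\zeta(2ms)$.

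It remains to collect the three contributions. The $x$-terms give $\frac{\zeta(1-a)H_{h}(s+1)}{(r+1)\zeta(s+1)}x$; the $x^{1+a}$-terms coming from the second and third sums combine into $\frac{H_{h}(s+a+1)}{(r+1)(1+a)\zeta(s+a+1)}\bigl(\zeta(1+a)+\sum_{m}\binom{r+1}{2m}B_{2m}\zeta(2ms+a+1)\bigr)x^{1+a}$, and with the diagonal contribution $\frac{H_{h}(s+a+1)}{2(1+a)}x^{1+a}$ these are exactly the asserted main terms; the three constants assemble into $\frac{\zeta(-a)H_{h}(s)}{2}+\frac{\zeta(-a)H_{h}(s)}{(r+1)\zeta(s)}\bigl(\sum_{m}\binom{r+1}{2m}B_{2m}\zeta(2ms)-\frac12\bigr)$, which together with $\frac1{r+1}\sum_{n\le x}\frac{(h*\mu)(n)}{n^{s}}\Delta_{a}(x/n)$ and the combined remainder $\Ocal_{r,s,a}(x^{a})$ is precisely $K_{r}^{(s)}(x;h*\id_{s+a})$ as in~\eqref{K-S-th777}. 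The main obstacle is the non-summability of $(h*\phi_{s+a})(n)\,n^{-s}$ in the Bernoulli sum; it is cleared by the factorization $h*\phi_{s+a}=(h*\mu)*\id_{s+a}$, which separates the non-summable but explicit factor $\id_{a}$ from the absolutely summable factor $(h*\mu)/\id_{s}$. The rest is routine bookkeeping of error exponents, all of which are bounded by $a$.
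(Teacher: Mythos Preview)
Your proof is correct and follows essentially the same approach as the paper, which merely says ``Similarly, we can obtain the formula~\eqref{sum_th777}'' after proving Theorem~\ref{th71}: start from identity~\eqref{Sumaia-K1}, evaluate the three blocks separately, feed~\eqref{Delta-a} into the $\sigma_a$-sum, and use $h*\phi_{s+a}=(h*\mu)*\id_{s+a}$ to resolve the Bernoulli sum. The only organizational difference is that in the Bernoulli block the paper (by analogy with its proof of Theorem~\ref{th71}) would regroup as $\frac{h*\mu}{\id_s}*\frac{\sigma_{a+2ms}}{\id_{2ms}}$ and apply an asymptotic for $\sum_{l\le y}\sigma_{a+2ms}(l)l^{-2ms}$, whereas you fix $l$, unfold the $d$-sum into the $(c,e)$ double sum, and apply Euler--Maclaurin to $\sum e^{a}$ directly; these are two equivalent orderings of the same triple sum and lead to the same main terms and $\Ocal(x^{a})$ remainder.
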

Many interesting applications of Theorems \ref{th71} and \ref{th72} are given in Section~\ref{section2}.  
\section{Applications of Theorems \ref{th71} and \ref{th72}}
\label{section2}
In this section, we give applications of Theorems \ref{th71} and \ref{th72} for various multiplicative functions 
such as $\mu$, $\tau$, $\phi_{s}$, $\psi_{s}$, ${\rm id}_{s}$  and others. 
Define the functions $D_{s}(x)$ and  ${\widetilde{D_{s}}}(x)$   by       
\begin{equation}                                                               \label{lem411}
D_{s}(x) = -\sum_{d\leq x}\frac{\mu(d)}{d^s}\vartheta\left(\frac{x}{d}\right) - \frac{1}{2\zeta(s)}
\end{equation}
and
\begin{equation}                                                          \label{lem421}
{\widetilde{D_{s}}}(x) = -\sum_{d\leq x}\frac{|\mu(d)|}{d^s}\vartheta\left(\frac{x}{d}\right) - \frac{\zeta(s)}{2\zeta(2s)}, 
\end{equation}
where  
$
\vartheta(x)=x - [x] - \frac{1}{2}.   
$
First, we take $h=\mu$,  $|\mu|$ and $h=\tau$ into (\ref{sum_th770}) and  use the identities  
\begin{equation}                                                             \label{HHH}
H_{\mu}(s+1)=\frac{1}{\zeta(s+1)}, \ \   
H_{\mu}^{'}(s+1)=-\frac{\zeta'(s+1)}{\zeta(s+1)^2},\ \   
H_{|\mu|}(s+1)= \frac{\zeta(s+1)}{\zeta(2s+2)},
\end{equation}
\begin{equation}                                                        \label{HHH1}
H_{|\mu|}^{'}(s+1)= \frac{\zeta'(s+1)\zeta(2s+2)-2\zeta'(2s+2)\zeta(s+1)}{\zeta(2s+2)^2}, 
\end{equation} 
\begin{equation}                                                          \label{HHH2}
H_{\tau}(s+1) = \zeta(s+1)^2, 
\quad \text{and} \quad  
H^{'}_{\tau}(s+1) = 2\zeta'(s+1)\zeta(s+1)
\end{equation} 
to deduce the following results:  
\begin{cor}
For  any  real number $x >1$  and   fixed  positive  integers $r$ and $s \geq 2$,  we have 
\begin{multline}                                                             \label{sum_th6}
  M_{r}^{(s)}(x;\phi_{s}) 
=\frac{1}{(r+1)\zeta(s+1)^2}x\log x + \frac{1}{2\zeta(s+1)} x  \\
 + \frac{x}{(r+1)\zeta(s+1)^2}
\left(2\gamma -1 - 2 \frac{\zeta'(s+1)}{\zeta(s+1)} + \sum_{m=1}^{[r/2]}\binom{r+1}{2m} B_{2m}\zeta(2ms+1)\right)  \\
 +  K_{r}^{(s)}(x;\phi_{s}),  
\end{multline}
\begin{multline}                                                          \label{sum_th7}
M_{r}^{(s)}(x;\psi_{s})   
 = \frac{1}{(r+1)\zeta(2s+2)}x\log x + \frac{\zeta(s+1)}{2\zeta(2s+2)} x  \\
 + \frac{x}{(r+1)\zeta(2s+2)}
\left(2\gamma -1 - 2 \frac{\zeta'(2s+2)}{\zeta(2s+2)} + \sum_{m=1}^{[r/2]}\binom{r+1}{2m} B_{2m}\zeta(2ms+1)\right)  \\
  +  K_{r}^{(s)}(x;\psi_{s}), 
\end{multline}
and 
\begin{multline}                                                           \label{sum_th77077}
  M_{r}^{(s)}(x;\tau*\id_{s})   
 = \frac{\zeta(s+1)}{r+1}x\log x + \frac{\zeta(s+1)^2}{2} x  \\
 + \frac{\zeta(s+1)}{r+1}
\left(2\gamma -1  + \frac{\zeta'(s+1)}{\zeta(s+1)} 
+ \sum_{m=1}^{[r/2]}\binom{r+1}{2m} B_{2m}\zeta(2ms+1)\right)x   \\
+  K_{r}^{(s)}(x;\tau*\id_{s}),           
\end{multline}
where  
\begin{multline}                                                             \label{K-S-th6}
  K_{r}^{(s)}(x;\phi_{s}) 
= \frac{1}{r+1} \sum_{n\leq x}\frac{(\mu*\mu)(n)}{n^s}\Delta_{}\left(\frac{x}{n}\right) + \frac{D_{s}(x)}{2}   \\
 - \frac{1}{2(r+1)\zeta(s)^2}\sum_{m=1}^{[r/2]}\binom{r+1}{2m}B_{2m}\zeta(2ms) + \Ocal_{r,s}\left(x^{1-s}(\log x)^2\right),  
\end{multline}
\begin{multline}                                                          \label{K-S-th71}
  K_{r}^{(s)}(x;\psi_{s}) 
= \frac{1}{r+1} \sum_{n\leq x}\frac{(\mu*|\mu|)(n)}{n^s}\Delta_{}\left(\frac{x}{n}\right) + \frac{{\widetilde{D_{s}}}(x)}{2}   \\
 - \frac{1}{2(r+1)\zeta(2s)}\sum_{m=1}^{[r/2]}\binom{r+1}{2m}B_{2m}\zeta(2ms) + \Ocal_{r,s}\left(x^{1-s}(\log x)^2\right),  
\end{multline}
and 
\begin{multline}                                                          \label{K-S-th77177}
 K_{r}^{(s)}(x;\tau*\id_{s}) 
= \frac{1}{r+1} \sum_{n\leq x}\frac{(\tau*\mu)(n)}{n^s}\Delta_{}\left(\frac{x}{n}\right)
 -  \frac{\zeta(s)^2}{2}  \\
 - \frac{\zeta(s)}{2(r+1)}\sum_{m=1}^{[r/2]}\binom{r+1}{2m}B_{2m}\zeta(2ms) 
- \sum_{d\leq x}\frac{\tau(d)}{d^s}\vartheta\left(\frac{x}{d}\right) + \Ocal_{r,s}\left(x^{1-s+\varepsilon}\right). 
\end{multline}
\end{cor}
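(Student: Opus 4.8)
The plan is to obtain all three asymptotic formulas, together with the stated shape of their error terms, by specializing Theorem~\ref{th71}. First record the factorizations $\phi_s=\id_s*\mu=\mu*\id_s$ and $\psi_s=\id_s*|\mu|=|\mu|*\id_s$, so that $M_r^{(s)}(x;\phi_s)$ and $M_r^{(s)}(x;\psi_s)$ are exactly $M_r^{(s)}(x;h*\id_s)$ with $h=\mu$ and $h=|\mu|$, while $M_r^{(s)}(x;\tau*\id_s)$ is of that form with $h=\tau$. The hypothesis $h(n)=\Ocal(n^\varepsilon)$ of Theorem~\ref{th71} then holds in each case, since $|\mu(n)|\le1$ and $\tau(n)=\Ocal(n^\varepsilon)$ by the divisor bound.

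Next substitute the Euler-product values \eqref{HHH}, \eqref{HHH1}, \eqref{HHH2} into the main term \eqref{sum_th770}. The only step requiring attention is the combination of the logarithmic-derivative term $H_h'(s+1)/H_h(s+1)$ with the $-\zeta'(s+1)/\zeta(s+1)$ already present in \eqref{sum_th770}: for $h=\mu$ it equals $-\zeta'(s+1)/\zeta(s+1)$, so the two add to $-2\zeta'(s+1)/\zeta(s+1)$ and, with $H_\mu(s+1)=1/\zeta(s+1)$, one arrives at \eqref{sum_th6}; for $h=|\mu|$ the identity \eqref{HHH1} shows it equals $\zeta'(s+1)/\zeta(s+1)-2\zeta'(2s+2)/\zeta(2s+2)$, so the $\zeta'(s+1)/\zeta(s+1)$ part cancels and, with $H_{|\mu|}(s+1)=\zeta(s+1)/\zeta(2s+2)$, one obtains \eqref{sum_th7}; for $h=\tau$ it equals $2\zeta'(s+1)/\zeta(s+1)$, giving a net $+\zeta'(s+1)/\zeta(s+1)$ and, with $H_\tau(s+1)=\zeta(s+1)^2$, formula \eqref{sum_th77077}.

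The error terms come from \eqref{K-S-th771} in the same manner. In each case $h*\mu$ becomes $\mu*\mu$, $\mu*|\mu|$, or $\tau*\mu$; the factor $H_h(s)/(2(r+1)\zeta(s))$ multiplying the Bernoulli sum equals $1/(2(r+1)\zeta(s)^2)$, $1/(2(r+1)\zeta(2s))$, or $\zeta(s)/(2(r+1))$; and the constant $-H_h(s)/2$ together with the sum $-\sum_{d\le x}h(d)d^{-s}\vartheta(x/d)$ is repackaged using $D_s(x)$ of \eqref{lem411} and $\widetilde{D_s}(x)$ of \eqref{lem421} when $h=\mu$ and $h=|\mu|$ --- these functions being defined precisely so as to absorb that combination --- whereas for $h=\tau$ no closed form exists and the $\vartheta$-sum is left explicit, with $-H_\tau(s)/2=-\zeta(s)^2/2$. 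The remainder $\Ocal_{r,s}(x^{1-s+\varepsilon})$ of Theorem~\ref{th71} would then be sharpened to $\Ocal_{r,s}(x^{1-s}(\log x)^2)$ in the first two cases by revisiting the estimation step in the proof of Theorem~\ref{th71}: since $(\mu*\mu)(n)$ and $(\mu*|\mu|)(n)$ are bounded by $\tau(n)$ in absolute value, the tails of the relevant Dirichlet series and the associated $\vartheta$-sums are $\Ocal(x^{1-s}\log x)$ rather than $\Ocal(x^{1-s+\varepsilon})$, and two such factors account for the power $(\log x)^2$.

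I expect the only non-routine point to be precisely this last bookkeeping: verifying that the additive constant and the coefficient of the $\vartheta$-sum in \eqref{K-S-th771} match the definitions \eqref{lem411} and \eqref{lem421} exactly, so that \eqref{K-S-th6} and \eqref{K-S-th71} contain no stray $1/\zeta(s)$-type constant, and confirming that the logarithmic saving over $x^\varepsilon$ is genuinely available for $\mu$ and $|\mu|$. Everything else is a mechanical substitution of \eqref{HHH}--\eqref{HHH2} into Theorem~\ref{th71}.
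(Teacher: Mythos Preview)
Your proposal is correct and follows exactly the paper's own route: the paper derives this corollary simply by substituting $h=\mu$, $|\mu|$, and $\tau$ into Theorem~\ref{th71} and invoking the identities \eqref{HHH}--\eqref{HHH2}. Your added remarks on how the $H_h'(s+1)/H_h(s+1)$ term combines with $-\zeta'(s+1)/\zeta(s+1)$, on how the $-H_h(s)/2$ and $\vartheta$-sum pieces repackage into $D_s(x)$ and $\widetilde{D_s}(x)$, and on why the remainder sharpens from $x^{1-s+\varepsilon}$ to $x^{1-s}(\log x)^2$ for $h=\mu,|\mu|$ are the right bookkeeping details and go slightly beyond what the paper spells out.
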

Second,  by taking   $h=\mu$,  $|\mu|$ and $\tau$ into \eqref{sum_th777} and using   the identities 
 (\ref{HHH}),  (\ref{HHH1})  and  (\ref{HHH2}),  we immediately get the  following formulas:  
\begin{cor}  
 Let $a$ be  any fixed  number with $-1<a<0$. 
For  any  real number $x >1$,   fixed  positive  integers $r$ and $s \geq 2$,   we have
\begin{multline}                                                          \label{sum_th61}
 M_{r}^{(s)}(x;\phi_{s+a}) 
=\frac{\zeta(1-a)}{(r+1)\zeta(s+1)^2}x + \frac{x^{1+a}}{2(1+a)\zeta(s+a+1)}  \\
+ \frac{x^{1+a}}{(r+1)(1+a)\zeta(s+a+1)^2}\left(\zeta(1+a) +  \sum_{m=1}^{[r/2]}\binom{r+1}{2m} B_{2m}\zeta(2ms+a+1)\right)   \\
 +  K_{r}^{(s)}(x;\phi_{s+a}),                   
\end{multline}
\begin{multline}                                                             \label{sum_th77}
  M_{r}^{(s)}(x;\psi_{s+a})   
 = \frac{\zeta(1-a)}{(r+1)\zeta(2s+2)}x + \frac{\zeta(s+a+1)}{2(1+a)\zeta(2s+2a+2)}x^{1+a}  \\
 + \frac{x^{1+a}}{(r+1)(1+a)\zeta(2s+2a+2)}\left(\zeta(1+a) +  \sum_{m=1}^{[r/2]}\binom{r+1}{2m} B_{2m}\zeta(2ms+a+1)\right)  \\
 +  K_{r}^{(s)}(x;\psi_{s+a}),           
\end{multline}
and 
\begin{multline}                                                           \label{sum_th77777}
 M_{r}^{(s)}(x;\tau*id_{s+a})   
 = \frac{\zeta(1-a)\zeta(s+1)}{r+1}x + \frac{\zeta(s+a+1)^2}{2(1+a)}x^{1+a}  \\
+ \frac{\zeta(s+a+1)}{(r+1)(1+a)}\left(\zeta(1+a) +  \sum_{m=1}^{[r/2]}\binom{r+1}{2m} B_{2m}\zeta(2ms+a+1)\right)x^{1+a} \\
 +  K_{r}^{(s)}(x;\tau*{\id_{s+a}}),       
\end{multline}
where  
\begin{multline}                                                          \label{K-S-th61}
  K_{r}^{(s)}(x;\phi_{s+a}) 
= \frac{1}{r+1} \sum_{n\leq x}\frac{(\mu*\mu)(n)}{n^s}\Delta_{a}\left(\frac{x}{n}\right) + \frac{\zeta(-a)}{2\zeta(s)}  \\ 
+ \frac{\zeta(-a)}{(r+1)\zeta(s)^2} \left(\sum_{m=1}^{[r/2]}\binom{r+1}{2m}B_{2m}\zeta(2ms)  -\frac12 \right)
  + \Ocal_{s,r,a}\left(x^{a}\right), 
\end{multline}
\begin{multline}                                                          \label{K-S-th77}
  K_{r}^{(s)}(x;\psi_{s+a}) 
= \frac{1}{r+1} \sum_{n\leq x}\frac{(\mu*|\mu|)(n)}{n^s}\Delta_{a}\left(\frac{x}{n}\right) + \frac{\zeta(-a)\zeta(s)}{2\zeta(2s)}  \\ 
+ \frac{\zeta(-a)}{(r+1)\zeta(2s)}\left(\sum_{m=1}^{[r/2]}\binom{r+1}{2m}B_{2m}\zeta(2ms) -\frac{1}{2}\right) 
  + \Ocal_{s,r,a}\left(x^{a}\right), 
\end{multline}
and 
\begin{multline}                                                          \label{K-S-th77777}
  K_{r}^{(s)}(x;\tau*\id_{s+a}) 
= \frac{1}{r+1} \sum_{n\leq x}\frac{(\tau*\mu)(n)}{n^s}\Delta_{a}\left(\frac{x}{n}\right) + \frac{\zeta(-a)\zeta(s)^2}{2}  \\ 
+ \frac{\zeta(-a)\zeta(s)}{r+1}\left(\sum_{m=1}^{[r/2]}\binom{r+1}{2m}B_{2m}\zeta(2ms) -\frac{1}{2}\right) 
  + \Ocal_{s,r,a}\left(x^{a}\right). 
\end{multline}
\end{cor}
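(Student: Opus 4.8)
The plan is to obtain all three formulas as direct specializations of Theorem~\ref{th72}, with no new analytic input; the work is purely substitution and algebraic simplification. First I would check that the growth condition $h(n)=O(n^{\varepsilon})$ demanded by Theorem~\ref{th72} holds in each of the three cases: $|\mu(n)|\le 1$ and $||\mu|(n)|\le 1$ trivially, while $\tau(n)=O(n^{\varepsilon})$ for every $\varepsilon>0$ is the classical divisor bound. Next I would identify the relevant convolutions. Since $\phi_{s+a}=\id_{s+a}*\mu$, $\psi_{s+a}=\id_{s+a}*|\mu|$, and $\tau*\id_{s+a}$ is already presented in the shape $h*\id_{s+a}$, the quantities $M_{r}^{(s)}(x;\phi_{s+a})$, $M_{r}^{(s)}(x;\psi_{s+a})$ and $M_{r}^{(s)}(x;\tau*\id_{s+a})$ are \emph{exactly} $M_{r}^{(s)}(x;h*\id_{s+a})$ with $h=\mu$, $|\mu|$, $\tau$ respectively, so \eqref{sum_th777} and \eqref{K-S-th777} apply verbatim.

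The remaining step is to insert into \eqref{sum_th777}--\eqref{K-S-th777} the explicit values of the Dirichlet series $H_{h}$ at the three arguments $s+1$, $s+a+1$ and $s$ that occur there. One has $H_{\mu}(\alpha)=1/\zeta(\alpha)$, $H_{|\mu|}(\alpha)=\zeta(\alpha)/\zeta(2\alpha)$ and $H_{\tau}(\alpha)=\zeta(\alpha)^{2}$, which at $\alpha=s+1$ are recorded in \eqref{HHH} and \eqref{HHH2} (the derivative identities in \eqref{HHH}, \eqref{HHH1}, \eqref{HHH2} are not needed here, as \eqref{sum_th777} carries no logarithmic term). Substituting and collecting, the coefficient $\zeta(1-a)H_{h}(s+1)/\bigl((r+1)\zeta(s+1)\bigr)$ of $x$ collapses to $\zeta(1-a)/\bigl((r+1)\zeta(s+1)^{2}\bigr)$, to $\zeta(1-a)/\bigl((r+1)\zeta(2s+2)\bigr)$, and to $\zeta(1-a)\zeta(s+1)/(r+1)$ in the three cases; the two $x^{1+a}$-coefficients, which involve $H_{h}(s+a+1)/\zeta(s+a+1)$ and $H_{h}(s+a+1)$, simplify in the same fashion to the coefficients displayed in \eqref{sum_th61}, \eqref{sum_th77}, \eqref{sum_th77777}. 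In the error term \eqref{K-S-th777}, the convolution $(h*\mu)(n)$ becomes $(\mu*\mu)(n)$, $(\mu*|\mu|)(n)$, $(\tau*\mu)(n)$, and the constants $\zeta(-a)H_{h}(s)/2$ and $\zeta(-a)H_{h}(s)/\bigl((r+1)\zeta(s)\bigr)$ become precisely the constants appearing in \eqref{K-S-th61}, \eqref{K-S-th77}, \eqref{K-S-th77777}; the remainder $\Ocal_{s,r,a}(x^{a})$ is inherited unchanged. This produces \eqref{sum_th61}--\eqref{K-S-th61}, \eqref{sum_th77}--\eqref{K-S-th77}, and \eqref{sum_th77777}--\eqref{K-S-th77777}.

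There is essentially no obstacle; the only two points worth a remark are the verification that $\tau$ satisfies the hypothesis of Theorem~\ref{th72} (the divisor bound) and the correct reading of the convolution algebra, namely that $\mathbf{1}*\mu$ is the unit for Dirichlet convolution, so that in the last case $\tau*\mu=\mathbf{1}*\mathbf{1}*\mu=\mathbf{1}$ (we nonetheless keep the notation $(\tau*\mu)(n)$ in \eqref{K-S-th77777} for uniformity with the other two displays). Everything else is the substitution of the three rational-in-$\zeta$ expressions for $H_{h}$ and routine simplification.
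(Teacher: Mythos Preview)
Your proposal is correct and follows exactly the paper's approach: the corollary is obtained by taking $h=\mu$, $|\mu|$, $\tau$ in Theorem~\ref{th72} (formula \eqref{sum_th777} and \eqref{K-S-th777}) and substituting the Dirichlet-series identities \eqref{HHH}--\eqref{HHH2}. Your additional explicit verification of the hypothesis $h(n)=O(n^{\varepsilon})$ and your observation that the derivative identities are unnecessary here are both accurate refinements of what the paper states in a single sentence.
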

The formulas \eqref{sum_th6} and \eqref{sum_th61} give us an analogue of \eqref{K-phi} and \eqref{gcd331}, respectively. 
Using \eqref{Peter},  the formulas~\eqref{K-S-th6} and \eqref{K-S-th61} are estimated by 
$\Ocal_{r,s}\left(x^{\frac{1}{3}+\varepsilon}\right)$ and  $\Ocal_{r,s,a}\left(x^{\frac{1+a}{3}+\varepsilon}\right)$,  respectively. 
Then, we  have the relations 
$$
\lim_{x\to\infty}\frac{ M_{r}^{(s)}(x;\phi_{s})}{x\log x}      =  \frac{1}{(r+1)\zeta(s+1)^2},     
\ \   {\rm and} \  \      
\lim_{x\to\infty}\frac{ M_{r}^{(s)}(x;\phi_{s+a}) }{x}     =  \frac{\zeta(1-a)}{(r+1)\zeta(s+1)^2}.   
$$
Similarly, the formulas \eqref{K-S-th71} and \eqref{K-S-th77} are   estimated by 
$\Ocal_{r,s}\left(x^{\frac{1}{3}+\varepsilon}\right)$ and  $\Ocal_{r,s,a}\left(x^{\frac{1+a}{3}+\varepsilon}\right)$, respectively.
Then, we have 
$$
\lim_{x\to\infty}\frac{ M_{r}^{(s)}(x;\psi_{s})}{x\log x}      =  \frac{1}{(r+1)\zeta(2s+2)},       
\ \ {\rm and} \ \      
\lim_{x\to\infty}\frac{ M_{r}^{(s)}(x;\psi_{s+a})}{x}      =  \frac{\zeta(1-a)}{(r+1)\zeta(2s+2)}.   
$$
We use  the formulas \eqref{sum_th6} and \eqref{sum_th7} to deduce 
\begin{multline*}
\zeta(s+1)^{2}M_{r}^{(s)}(x;\phi_s) - \zeta(2s+2)M_{r}^{(s)}(x;\psi_s) 
 = \\\frac{2}{r+1}\left(\frac{\zeta'(2s+2)}{\zeta(2s+2)}-\frac{\zeta'(s+1)}{\zeta(s+1)}\right)x  
 +  \zeta^{}(s+1)^{2} K_{r}^{(s)}(x;\phi_s) -  \zeta^{}(2s+2) K_{r}^{(s)}(x;\psi_s).  
\end{multline*}
Furthermore, using \eqref{sum_th61} and \eqref{sum_th77}  we obtain that 
\begin{multline*}
\zeta(s+a+1)^{2}M_{r}^{(s)}(x;\phi_{s+a}) - \zeta(2s+2a+2)M_{r}^{(s)}(x;\psi_{s+a}) \\
 = \frac{\zeta(1-a)}{r+1}\left(\frac{\zeta(s+a+1)^2}{\zeta(s+1)^2} - \frac{\zeta(2s+2a+2)}{\zeta(2s+2)}\right)x \\ 
+  \zeta^{}(s+a+1)^{2} K_{r}^{(s)}(x;\phi_{s+a}) -  \zeta(2s+2a+2) K_{r}^{(s)}(x;\psi_{s+a}). 
\end{multline*}
This latter is an  analogue of the relation~\eqref{Remark_th2}.
\begin{rem}
Let the arithmetical function $h=\xi_{q}$ be $q$-free number for any fixed positive integer $q$ defined  by 
\begin{equation}                                                            \label{LKJ}
H_{\xi_q}(s) = \sum_{n=1}^{\infty}\frac{\xi_{q}(n)}{n^s} = \frac{\zeta(s)}{\zeta(qs)}.  
\end{equation}
Then the first derivative of $H_{\xi_q}(s)$ with respect to $s$ is  
\begin{equation}                                                                  \label{HGF}
H'_{\xi_q}(s)=-\sum_{n=1}^{\infty}\frac{\xi_{q}(n)\log n}{n^s} = \frac{\zeta'(s)\zeta(qs)-q\zeta'(qs)\zeta(s)}{\zeta(qs)^2}.  
\end{equation}
Similarly  as  above,  we can take  $h=\xi_{q}$ into \eqref{sum_th770} and \eqref{sum_th777}, and  use \eqref{LKJ} and \eqref{HGF} to  derive  asymptotic formulas of 
$
M_{r}^{(s)}(x;\xi_{q}*\id_{s})  
$
and 
$
M_{r}^{(s)}(x;\xi_{q}*\id_{s+a}),   
$
respectively.  
\end{rem}
\section{Some Lemmas}
Before we proceed with the proof of the main results, 
we need to  give  some  auxiliary  lemmas. 
\begin{lem}    
\label{lem1}
For any sufficiently large  number $x>1$ and  fixed  number $a$  such  that $-1<a<0$, 
 we have
\begin{equation}    
\label{lem1-apo} 
 \sum_{n\leq x}n^{a} = \frac{x^{1+a}}{1+a} + \zeta(-a) + \Ocal_{a}\left(x^{a}\right),                       
 \end{equation} 
\begin{equation} 
\label{lem1-mu}  
 \sum_{n\leq x}\frac{\mu(n)}{n} = \Ocal\left(\delta(x)\right),      
 \end{equation}
 \begin{equation}
  \label{lem1-mu1}
 \sum_{n\leq x}\frac{\mu(n)}{n^2} = \frac{1}{\zeta(2)} + \Ocal\left(\frac{\delta(x)}{x}\right),     
\end{equation}
and 
\begin{equation}
\label{lem1-mu2} 
 \sum_{n\leq x}\frac{\mu(n)}{n^{2+a}}
= \frac{1}{\zeta(2+a)}  + \Ocal_{a}\left(\frac{\delta(x)}{x^{1+a}}\right),
\end{equation}
where   
\begin{equation}                                                           \label{deltaas} 
\delta(x) = {\rm exp}\left(-C\frac{(\log x)^{3/5}}{(\log\log x)^{1/5}}\right)
\end{equation}  
with $C$ being a positive constant.
\end{lem}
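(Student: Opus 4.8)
The plan is to prove the four estimates in sequence, each by a standard elementary device. For \eqref{lem1-apo} I would apply the Euler--Maclaurin summation formula (or, equivalently, Abel summation against the function $t^a$) to $\sum_{n\le x}n^a$. Writing
$$
\sum_{n\le x}n^a = \int_{1}^{x}t^a\,dt - \int_{1}^{x}\{t\}\,d(t^a) + \text{boundary terms},
$$
the main integral contributes $\frac{x^{1+a}}{1+a}-\frac{1}{1+a}$; the fractional-part integral, together with the constant $-\frac{1}{1+a}$, assembles into the value $\zeta(-a)$ via the classical integral representation $\zeta(s)=\frac{1}{s-1}-s\int_1^\infty\{t\}t^{-s-1}\,dt$ valid for $\mathrm{Re}\,s>-1$ (here with $s=-a\in(0,1)$), since $-1<a<0$ makes all the tails convergent. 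The remaining tail $a\int_x^\infty\{t\}t^{a-1}\,dt$ is $\Ocal_a(x^a)$ because $a<0$. This is the cleanest of the four and can be cited directly from Apostol as well.

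For \eqref{lem1-mu} the estimate $\sum_{n\le x}\mu(n)/n = \Ocal(\delta(x))$ with $\delta$ as in \eqref{deltaas} is precisely the known consequence of the Vinogradov--Korobov zero-free region for $\zeta$ applied to $M(x)=\sum_{n\le x}\mu(n)$; one deduces the weighted sum from $M(x)=\Ocal(x\delta(x))$ by partial summation, using that $\delta$ is slowly varying. I would simply invoke this as a standard fact (Walfisz), since re-deriving the zero-free-region input is outside the scope. Then \eqref{lem1-mu1} follows from \eqref{lem1-mu} by writing $\sum_{n\le x}\mu(n)/n^2 = \sum_{n=1}^\infty\mu(n)/n^2 - \sum_{n>x}\mu(n)/n^2 = 1/\zeta(2) - \sum_{n>x}\mu(n)/n^2$, and estimating the tail by partial summation from $\sum_{n\le y}\mu(n)/n=\Ocal(\delta(y))$: one integration by parts against $y^{-1}$ gives $\Ocal(\delta(x)/x)$. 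The same computation with $n^{-(2+a)}=n^{-1}\cdot n^{-(1+a)}$ and $1+a>0$ yields \eqref{lem1-mu2} with error $\Ocal_a(\delta(x)/x^{1+a})$, the convergence of the completed series to $1/\zeta(2+a)$ being immediate since $2+a>1$.

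The only genuine subtlety — and the step I would flag as the main obstacle — is making sure the error terms are uniform in the stated shape and that the constant $\zeta(-a)$ in \eqref{lem1-apo} is correctly identified through analytic continuation rather than by naive term-by-term manipulation (the Dirichlet series $\sum n^{-(-a)}$ does \emph{not} converge for $a<0$, so one must argue through the integral formula for $\zeta$). Once that identification is in place, everything else is bookkeeping: partial summation and tail estimates. I would present \eqref{lem1-apo} in full detail, state \eqref{lem1-mu} with a reference, and give \eqref{lem1-mu1}--\eqref{lem1-mu2} as two nearly identical partial-summation arguments, writing out one and remarking that the other is analogous with $2$ replaced by $2+a$.
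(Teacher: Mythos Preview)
Your proposal is correct and aligns with the paper's treatment: the paper simply cites Apostol for \eqref{lem1-apo}, Jia for \eqref{lem1-mu}, and Sitaramachandra~Rao--Suryanarayana for \eqref{lem1-mu1}--\eqref{lem1-mu2}, without writing out any details. Your explicit Euler--Maclaurin derivation of \eqref{lem1-apo} and the partial-summation deduction of \eqref{lem1-mu1}--\eqref{lem1-mu2} from \eqref{lem1-mu} are exactly the arguments underlying those references, so there is no substantive difference in approach---you are just unpacking what the paper leaves as citations.
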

\begin{proof}
 The formula~\eqref{lem1-apo} follows from  Theorem 3.2 (b) in~\cite{Ap}. 
The formulas \eqref{lem1-mu1} and \eqref{lem1-mu2} follow from Lemmas 2.2 in~\cite{SS}.  
Furthermore, the proof of the formula \eqref{lem1-mu} can be found in~\cite{Ji}.  
\end{proof}
\begin{lem}
\label{lem2}
For any sufficiently large  number $x>1$,   fixed  number $a$ such that $-1<a<0$  and 
fixed integer $m \geq 2$,   we have 
\begin{multline}                                                            \label{lem2-sigma1}                                                    
   \sum_{n\leq x} \left(\frac{\mu}{{\rm id}}*\sigma_{a}\right)(n)  
= \frac{\zeta(1-a)}{\zeta(2)}x                                            
 + \frac{\zeta(1+a)}{(1+a)\zeta(2+a)}x^{1+a} \\
  + \sum_{d\leq x}\frac{\mu(d)}{d}\Delta_{a}\left(\frac{x}{d}\right) + \Ocal_{a}\left(\delta(x)\right),   
\end{multline}
where $\delta(x)$ is given by \eqref{deltaas},     
and 
\begin{equation}   
\label{lem2-la}
\sum_{dl\leq x} \left(\frac{\mu}{{\rm id}}*\frac{1}{{\rm id}_{2m}}\right)(d)l^{a} =
\frac{\zeta(2m+a+1)}{(1+a)\zeta(2+a)} x^{1+a} + \Ocal_{a,m}\left(1\right).
\end{equation}
\end{lem}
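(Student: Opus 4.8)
The plan is to treat the two formulas separately, both via elementary Dirichlet-hyperbola manipulations on the convolution $\frac{\mu}{\id}*\sigma_a$.

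For \eqref{lem2-sigma1}, I would first unfold the convolution: $\sum_{n\le x}\bigl(\frac{\mu}{\id}*\sigma_a\bigr)(n)=\sum_{d\le x}\frac{\mu(d)}{d}\sum_{m\le x/d}\sigma_a(m)$. Now I invoke the asymptotic \eqref{Delta-a} for $\sum_{m\le y}\sigma_a(m)$, valid for $-1<a<0$, which supplies the main terms $\zeta(1-a)y$, $\frac{\zeta(1+a)}{1+a}y^{1+a}$, the constant $-\frac{\zeta(-a)}{2}$, and the error $\Delta_a(y)$. Substituting $y=x/d$ and summing against $\frac{\mu(d)}{d}$, the term $\zeta(1-a)\sum_{d\le x}\frac{\mu(d)}{d^2}\cdot x$ produces $\frac{\zeta(1-a)}{\zeta(2)}x$ after \eqref{lem1-mu1} (the tail contributing $\Ocal_a(\delta(x))$ since $x\cdot\frac{\delta(x)}{x}=\delta(x)$); the term $\frac{\zeta(1+a)}{1+a}x^{1+a}\sum_{d\le x}\frac{\mu(d)}{d^{2+a}}$ produces $\frac{\zeta(1+a)}{(1+a)\zeta(2+a)}x^{1+a}$ after \eqref{lem1-mu2} (tail $\Ocal_a(x^{1+a}\cdot\frac{\delta(x)}{x^{1+a}})=\Ocal_a(\delta(x))$); the constant term contributes $-\frac{\zeta(-a)}{2}\sum_{d\le x}\frac{\mu(d)}{d}=\Ocal(\delta(x))$ by \eqref{lem1-mu}; and the $\Delta_a$ term is exactly the stated sum $\sum_{d\le x}\frac{\mu(d)}{d}\Delta_a(x/d)$. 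Collecting these yields \eqref{lem2-sigma1}.

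For \eqref{lem2-la}, I would again unfold: writing the inner convolution as $\bigl(\frac{\mu}{\id}*\frac{1}{\id_{2m}}\bigr)(d)=\sum_{d=ef}\frac{\mu(e)}{e}\frac{1}{f^{2m}}$, the whole sum becomes $\sum_{efl\le x}\frac{\mu(e)}{e}\frac{1}{f^{2m}}l^a$. I would sum over $l\le x/(ef)$ first using \eqref{lem1-apo}, which gives $\frac{(x/(ef))^{1+a}}{1+a}+\zeta(-a)+\Ocal_a((x/(ef))^a)$. The main term is $\frac{x^{1+a}}{1+a}\sum_{e}\frac{\mu(e)}{e^{2+a}}\sum_f \frac{1}{f^{2m+1+a}}$; completing both sums to infinity (the tails are absorbed since $2+a>1$ and $2m+1+a>1$, each completion costing $\Ocal_{a,m}(x^{1+a}\cdot(x)^{-1})=\Ocal_{a,m}(x^a)\subset\Ocal_{a,m}(1)$ — or more carefully $\Ocal_{a,m}(1)$ directly), gives $\frac{x^{1+a}}{1+a}\cdot\frac{1}{\zeta(2+a)}\cdot\zeta(2m+1+a)$, which is the claimed main term. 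The contribution of the constant $\zeta(-a)$ is $\zeta(-a)\sum_{ef\le x}\frac{\mu(e)}{e}\frac{1}{f^{2m}}=\Ocal_{a,m}(1)$ since the $e$-sum is $\Ocal(1)$ by \eqref{lem1-mu} and the $f$-sum converges; the contribution of the $\Ocal_a((x/(ef))^a)$ error, summed against $\frac{|\mu(e)|}{e}\frac{1}{f^{2m}}$ over $ef\le x$, is $\Ocal_{a,m}\bigl(x^a\sum_{ef\le x}\frac{1}{e^{1+a}f^{2m+a}}\bigr)$; since $a<0$ this sum is $\Ocal(x^{-a})$ at worst (from the $e$-range, using $-1<a<0$), so the total is $\Ocal_{a,m}(1)$. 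This establishes \eqref{lem2-la}.

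The main obstacle is bookkeeping of the error terms — in particular making sure every tail of a completed Dirichlet series and every use of \eqref{lem1-apo}'s $\Ocal_a(x^a)$ term, after being multiplied by the relevant prefactors and summed, genuinely collapses into $\Ocal_a(\delta(x))$ in \eqref{lem2-sigma1} and into $\Ocal_{a,m}(1)$ in \eqref{lem2-la}. For \eqref{lem2-sigma1} the delicate point is that no better-than-$\delta(x)$ saving is available from the M\"obius sums \eqref{lem1-mu}–\eqref{lem1-mu2}, so all three non-$\Delta_a$ error contributions must be shown to be $\Ocal_a(\delta(x))$ and not accidentally larger; for \eqref{lem2-la} the delicate point is confirming that the $e$-summation range in the error term does not blow up, which is where the hypothesis $-1<a<0$ (equivalently $1+a>0$, keeping $\sum e^{-1-a}$ controlled up to $x$) is used. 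Everything else is routine.
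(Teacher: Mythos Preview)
Your proposal is correct and follows essentially the same route as the paper's own proof: for \eqref{lem2-sigma1} you both unfold the convolution as $\sum_{d\le x}\frac{\mu(d)}{d}\sum_{l\le x/d}\sigma_a(l)$, insert \eqref{Delta-a}, and apply \eqref{lem1-mu}--\eqref{lem1-mu2}; for \eqref{lem2-la} you both expand to a triple sum, apply \eqref{lem1-apo} to the $l$-sum, and complete the remaining two sums to infinity, with the same error analysis. The only cosmetic difference is that the paper reuses the labels $d,l$ for the inner convolution variables after expanding, whereas you introduce fresh names $e,f$.
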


\begin{proof}  
From \eqref{Delta-a}, we find that   
\begin{eqnarray*}
 \sum_{n\leq x}\left(\frac{\mu}{{\rm id}}*\sigma_{a}\right)(n)  
&=&  \sum_{d\leq x}\frac{\mu(d)}{d} \sum_{l\leq x/d}\sigma_{a}(l)  
\\&=& \sum_{d\leq x}\frac{\mu(d)}{d}\left(\zeta(1-a)\frac{x}{d} + \frac{\zeta(1+a)}{1+a}\left(\frac{x}{d}\right)^{1+a}
 - \frac{\zeta(-a)}{2} + \Delta_{a}\left(\frac{x}{d}\right)\right) 
\\&=& \zeta(1-a) x\sum_{d\leq x}\frac{\mu(d)}{d^2} + \frac{\zeta(1+a)}{1+a} x^{1+a} \sum_{d\leq x}\frac{\mu(d)}{d^{2+a}} 
\\&& \qquad \qquad  \qquad \qquad -  \frac{\zeta(-a)}{2} \sum_{d\leq x}\frac{\mu(d)}{d} + \sum_{d\leq x}\frac{\mu(d)}{d}\Delta_{a}\left(\frac{x}{d}\right). 
\end{eqnarray*}
Using  \eqref{lem1-mu}, \eqref{lem1-mu1} and \eqref{lem1-mu2},  we get   
\begin{multline*}
\sum_{n\leq x}\left(\frac{\mu}{{\rm id}}*\sigma_{a}\right)(n)  
= \zeta(1-a) x\left(\frac{1}{\zeta(2)} + O\left(\frac{\delta(x)}{x}\right)\right) \\
+  \frac{\zeta(1+a)}{1+a} x^{1+a} \left(\frac{1}{\zeta(2+a)} + \Ocal\left(\frac{\delta(x)}{x^{1+a}}\right)\right) 
 + \sum_{d\leq x}\frac{\mu(d)}{d}\Delta_{a}\left(\frac{x}{d}\right)  + \Ocal_{a}\left(\delta(x)\right), 
\end{multline*}
then \eqref{lem2-sigma1} is proved.  
Similarly,  using \eqref{lem1-apo} and \eqref{lem1-mu} we find  that  
\begin{align*}
&  \sum_{dl\leq x}\left(\frac{\mu}{{\rm id}}*\frac{1}{{\rm id}_{2m}}\right)(d)l^{a}  
=  \sum_{d\leq x}\left(\frac{\mu}{{\rm id}}*\frac{1}{{\rm id}_{2m}}\right)(d) \sum_{l\leq x/d} l^{a}  \\
&=  \frac{x^{1+a}}{1+a} \sum_{dl\leq x}\frac{\mu(d)}{d^{2+a}}\frac{1}{l^{2m+a+1}} 
+   \zeta(-a)  \sum_{dl\leq x}\frac{\mu(d)}{d}\frac{1}{l^{2m}}  + \Ocal_{a}\left(x^{a} \sum_{dl\leq x}\frac{1}{d^{1+a}}\frac{1}{l^{2m+a}}\right)  \\
&= \frac{\zeta(2m+a+1)}{(1+a)\zeta(2+a)} x^{1+a} + \Ocal_{a,m}\left(1\right).    
\end{align*}
This completes the proof of \eqref{lem2-la}.    
\end{proof}
\begin{lem}
\label{lem3}
For any sufficiently large  number $x>1$, fixed  number $a$  such that $-1<a<0$,  and 
fixed  integer $m \geq 2$,  we have 
\begin{multline}                                                          
\label{lem2-mu}
\sum_{dl\leq x} \frac{(\phi_{1+a}*\mu)(d)}{d}  
= \frac{\zeta(1-a)}{\zeta(2)^2} x  + \frac{\zeta(1+a)}{(1+a)\zeta(2+a)^2}x^{1+a}                        
\\ + \sum_{d\leq x}\frac{(\mu*\mu)(d)}{d}\Delta_{a}\left(\frac{x}{d}\right) + \Ocal_{a}\left((\log x)^2\right),    
\end{multline}
\begin{multline}
\label{lem2-psi} 
 \sum_{dl\leq x} \frac{(\psi_{1+a}*\mu)(d)}{d}  
= \frac{\zeta(1-a)}{\zeta(4)} x  + \frac{\zeta(1+a)}{(1+a)\zeta(4+2a)}x^{1+a}                                       \\
+ \sum_{d\leq x}\frac{(|\mu|*\mu)(d)}{d}\Delta_{a}\left(\frac{x}{d}\right) + \Ocal_{a}\left((\log x)^2\right),  
\end{multline}
\begin{equation}
\label{lem2-l2m} 
\sum_{dl\leq x} \frac{(\phi_{1+a}*\mu)(d)}{d}  \frac{1}{l^{2m}} =
\frac{\zeta(a+2m+1)}{(a+1)\zeta(a+2)^2}x^{a+1} + \Ocal_{a,m}\left((\log x)^2\right),                                            
\end{equation}
and 
\begin{equation}
\label{lem2-psi2m}
\sum_{dl\leq x} \frac{(\psi_{1+a}*\mu)(d)}{d}  \frac{1}{l^{2m}} =
\frac{\zeta(2m+a+1)}{(1+a)\zeta(4+2a)} x^{1+a} + \Ocal_{a,m}\left((\log x)^2\right).                                            
\end{equation}
\end{lem}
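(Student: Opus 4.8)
The plan is to deduce all four identities from Lemma~\ref{lem2}, by exactly the kind of Dirichlet-convolution bookkeeping that was used to pass from \eqref{Delta-a} and Lemma~\ref{lem1} to \eqref{lem2-sigma1} and \eqref{lem2-la}. The structural fact that makes this possible is the completely multiplicative twisting rule $\frac{f*g}{\id}=\frac{f}{\id}*\frac{g}{\id}$, which together with $\phi_{1+a}=\id_{1+a}*\mu$ and $\psi_{1+a}=\id_{1+a}*|\mu|$ gives
$$\frac{\phi_{1+a}*\mu}{\id}=\id_{a}*\frac{\mu}{\id}*\frac{\mu}{\id},\qquad\frac{\psi_{1+a}*\mu}{\id}=\id_{a}*\frac{|\mu|}{\id}*\frac{\mu}{\id}.$$

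For \eqref{lem2-mu} I would write $\sum_{dl\le x}\frac{(\phi_{1+a}*\mu)(d)}{d}=\sum_{n\le x}\big(\tfrac{\phi_{1+a}*\mu}{\id}*\1\big)(n)$ and use $\id_{a}*\1=\sigma_{a}$ to recognise the summand as $\big((\tfrac{\mu}{\id}*\sigma_{a})*\tfrac{\mu}{\id}\big)(n)$; peeling off the outer factor $\tfrac{\mu}{\id}$ turns the sum into $\sum_{e\le x}\tfrac{\mu(e)}{e}\sum_{c\le x/e}(\tfrac{\mu}{\id}*\sigma_{a})(c)$, to whose inner sum \eqref{lem2-sigma1} applies. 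Summing the three resulting main terms against $\tfrac{\mu(e)}{e}$ and invoking \eqref{lem1-mu}, \eqref{lem1-mu1} and \eqref{lem1-mu2} produces the coefficients $\tfrac{\zeta(1-a)}{\zeta(2)^{2}}$ of $x$ and $\tfrac{\zeta(1+a)}{(1+a)\zeta(2+a)^{2}}$ of $x^{1+a}$; the $\Delta_{a}$-contributions collapse, via $\tfrac{\mu}{\id}*\tfrac{\mu}{\id}=\tfrac{\mu*\mu}{\id}$ and the substitution $m=ed$, to $\sum_{m\le x}\tfrac{(\mu*\mu)(m)}{m}\Delta_{a}(\tfrac{x}{m})$; and the accumulated error is $\sum_{e\le x}\tfrac{|\mu(e)|}{e}\,O_{a}(\delta(x/e))=O_{a}((\log x)^{2})$. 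Identity \eqref{lem2-psi} is obtained in the same way, peeling off $\tfrac{|\mu|}{\id}$ instead, so that the $\Delta_{a}$-terms recombine into $\sum_{m\le x}\tfrac{(\mu*|\mu|)(m)}{m}\Delta_{a}(\tfrac{x}{m})$; the only new input is the elementary pair $\sum_{e\le x}\tfrac{|\mu(e)|}{e^{2}}=\tfrac{\zeta(2)}{\zeta(4)}+O(x^{-1})$ and $\sum_{e\le x}\tfrac{|\mu(e)|}{e^{2+a}}=\tfrac{\zeta(2+a)}{\zeta(4+2a)}+O(x^{-1-a})$, which follow from the Euler product $\prod_{p}(1+p^{-s})=\zeta(s)/\zeta(2s)$ and a trivial tail bound.

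For \eqref{lem2-l2m} and \eqref{lem2-psi2m} the same manoeuvre applies after one extra observation: $\tfrac{\phi_{1+a}*\mu}{\id}*\tfrac{1}{\id_{2m}}=\tfrac{\mu}{\id}*\big(\id_{a}*\tfrac{\mu}{\id}*\tfrac{1}{\id_{2m}}\big)$ and $\tfrac{\psi_{1+a}*\mu}{\id}*\tfrac{1}{\id_{2m}}=\tfrac{|\mu|}{\id}*\big(\id_{a}*\tfrac{\mu}{\id}*\tfrac{1}{\id_{2m}}\big)$, so that after peeling off $\tfrac{\mu}{\id}$, respectively $\tfrac{|\mu|}{\id}$, the inner sum over $c\le x/e$ is precisely $\sum_{dl\le x/e}\big(\tfrac{\mu}{\id}*\tfrac{1}{\id_{2m}}\big)(d)\,l^{a}$, i.e.\ the left-hand side of \eqref{lem2-la}. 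Inserting $\tfrac{\zeta(2m+a+1)}{(1+a)\zeta(2+a)}(x/e)^{1+a}+O_{a,m}(1)$ and summing against $\tfrac{\mu(e)}{e}$, respectively $\tfrac{|\mu(e)|}{e}$, then using \eqref{lem1-mu2}, respectively the $|\mu|$-estimate above, for the main term and $\sum_{e\le x}\tfrac1e\ll\log x$ for the error, yields the claimed $x^{1+a}$-terms with coefficient $\tfrac{\zeta(2m+a+1)}{(1+a)\zeta(2+a)^{2}}$, respectively $\tfrac{\zeta(2m+a+1)}{(1+a)\zeta(4+2a)}$, and total error $O_{a,m}((\log x)^{2})$; no $\Delta_{a}$-term survives in these two identities.

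The only real difficulty is the bookkeeping: one must check carefully that the $\Delta_{a}$-pieces emerging from \eqref{lem2-sigma1} reassemble \emph{exactly} into the single Dirichlet-convolution sums displayed in \eqref{lem2-mu} and \eqref{lem2-psi}, and that the various error contributions (from \eqref{lem2-sigma1}, from \eqref{lem1-mu}--\eqref{lem1-mu2} and their $|\mu|$-analogues, and from the outer summation over $e$, including the range where $x/e$ is small) genuinely add up to $O((\log x)^{2})$ uniformly in $x$. There is no analytic obstacle beyond what already entered the proof of Lemma~\ref{lem2}.
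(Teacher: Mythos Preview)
Your proposal is correct and rests on the same convolution identities as the paper, but you organise the computation differently. The paper groups the two M\"obius-type factors together from the start: having observed that $\frac{\phi_{1+a}*\mu}{\id}*\1=\frac{\mu*\mu}{\id}*\sigma_a$, it writes the sum directly as $\sum_{d\le x}\frac{(\mu*\mu)(d)}{d}\sum_{l\le x/d}\sigma_a(l)$ and applies \eqref{Delta-a} in one step, using the elementary estimates $\sum_{d\le x}\frac{(\mu*\mu)(d)}{d^s}=\zeta(s)^{-2}+O(x^{1-s}\log x)$ for $s=2,2+a$ and $\sum_{d\le x}\frac{(\mu*\mu)(d)}{d}\ll(\log x)^2$; for \eqref{lem2-l2m} it uses the parallel identity $\frac{\phi_{1+a}*\mu}{\id}*\frac{1}{\id_{2m}}=\frac{\mu*\mu}{\id}*\frac{\sigma_{a+2m}}{\id_{2m}}$ together with an asymptotic for $\sum_{l\le x}\sigma_{a+2m}(l)/l^{2m}$. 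You instead peel off a single factor $\frac{\mu}{\id}$ (respectively $\frac{|\mu|}{\id}$) and feed the inner sum back into Lemma~\ref{lem2}, treating \eqref{lem2-sigma1} and \eqref{lem2-la} as black boxes. The paper's route is one step shorter, since the $\Delta_a$-term emerges already in the desired form $\sum_{d\le x}\frac{(\mu*\mu)(d)}{d}\Delta_a(x/d)$ with no reassembly needed; your route has the advantage that it requires no new partial-sum input beyond Lemma~\ref{lem1} and the trivial $|\mu|$-analogues, and in fact yields an accumulated error $O_a(\log x)$, slightly sharper than the $(\log x)^2$ produced in the paper by the crude bound on $\sum_{d\le x}\tfrac{|(\mu*\mu)(d)|}{d}$.
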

\begin{proof}  
First we  are  going to prove \eqref{lem2-mu} and \eqref{lem2-l2m}.
Notice that 
$$
\left(\frac{\mu}{\id}*\frac{\phi_{1+a}}{\id}\right)*{\1} = 
\frac{\mu}{\id}*\id_{a}*\frac{\mu}{\id}*\1 = 
\frac{\mu*\mu}{\id}*\sigma_{a}. 
$$
Then, we write   
$$
\sum_{dl\leq x}\frac{(\phi_{1+a}*\mu)(d)}{d}=\sum_{d\leq x}\frac{(\mu*\mu)(d)}{d}\sum_{l\leq x/d}\sigma_{a}(l).
$$
Using  (\ref{Delta-a}), the formulas 
$$
\sum_{d\leq x}\frac{(\mu*\mu)(d)}{d^2} = \frac{1}{\zeta(2)^2} + \Ocal\left(\frac{\log x}{x}\right), \ 
\sum_{d\leq x}\frac{(\mu*\mu)(d)}{d^{2+a}} = \frac{1}{\zeta(2+a)^2} + \Ocal_{a}\left(\frac{\log x}{x^{1+a}}\right),
$$
and the estimate 
$
\sum_{d\leq x}\frac{(\mu*\mu)(d)}{d} \ll \sum_{d\leq x}\frac{\tau(d)}{d}  \ll (\log x)^2,  
$
we find that 
\begin{eqnarray*}
\sum_{dl\leq x} \frac{(\phi_{1+a}*\mu)(d)}{d} 
&=&  \sum_{d\leq x}  \frac{(\mu*\mu)(d)}{d} \sum_{l\leq x/d} \sigma_{a}(l)   \\
&=& \zeta(1-a)x \sum_{d\leq x}\frac{(\mu*\mu)(d)}{d^{2}} 
+  \frac{\zeta(1+a)}{1+a} x^{1+a} \sum_{d\leq x}\frac{(\mu*\mu)(d)}{d^{2+a}}  \\
& & \qquad \qquad -  \frac{\zeta(-a)}{2} \sum_{d\leq x}\frac{(\mu*\mu)(d)}{d} 
+  \sum_{d\leq x}\frac{(\mu*\mu)(d)}{d}\Delta_{a}\left(\frac{x}{d}\right).
\end{eqnarray*}
This completes the proof of \eqref{lem2-mu}.  
Since 
$$
\frac{\phi_{1+a}*\mu}{\id}*\frac{1}{\id_{2m}} = \frac{\mu*\mu}{\id}*\frac{\sigma_{a+2m}}{\id_{2m}}, 
$$
then, using the  formulas  
$$
\sum_{l\leq x}\frac{\sigma_{a+2m}(l)}{l^{2m}} = 
\frac{\zeta(a+2m+1)}{a+1}x^{a+1} + \zeta(-a)\zeta(2m) + \Ocal_{a,m}\left(x^{a}\right)
$$
and \eqref{lem1-mu2}, we get  
\begin{align*}
& \sum_{dl\leq x}\frac{(\mu*\mu)(d)}{d}\frac{1}{l^{2m}} = 
  \sum_{d\leq x}\frac{(\mu*\mu)(d)}{d}\sum_{l\leq x/d}\frac{\sigma_{a+2m}}{l^{2m}} \\
&=\sum_{d\leq x}\frac{(\mu*\mu)(d)}{d}\left(\frac{\zeta(a+2m+1)}{a+1}\left(\frac{x}{d}\right)^{a+1}
 + \zeta(-a)\zeta(2m) + \Ocal\left(\frac{x^a}{d^a}\right)\right) \\
&=\frac{\zeta(a+2m+1)}{(a+1)\zeta(a+2)^2}x^{a+1} + \Ocal_{a,m}\left((\log x)^2\right).  
\end{align*}
This completes the proof of \eqref{lem2-l2m}.
Similarly, we use the facts  
$$
\left(\frac{\mu}{\id}*\frac{\psi_{1+a}}{\id}\right)*{\1}  = \frac{|\mu|*\mu}{\id}*\sigma_{a},  
\ \  
\frac{\psi_{1+a}*\mu}{{\rm id}}*\frac{1}{\id_{2m}} = \frac{|\mu|*\mu}{{\rm id}}*\frac{\sigma_{a+2m}}{\id_{2m}}, 
$$
$$
\sum_{d\leq x}\frac{(|\mu|*\mu)(d)}{d^2} = \frac{1}{\zeta(4)} + \Ocal\left(\frac{\log x}{x}\right), \   
\sum_{d\leq x}\frac{(|\mu|*\mu)(d)}{d^{2+a}} = \frac{1}{\zeta(4+2a)} + \Ocal_{a}\left(\frac{\log x}{x^{1+a}}\right),
$$
and the estimate 
$
\sum_{d\leq x}\frac{(|\mu|*\mu)(d)}{d} \ll (\log x)^2  
$
to prove \eqref{lem2-psi} and \eqref{lem2-psi2m}.  
\end{proof}
In order to prove  Theorems~\ref{th3} and \ref{th4} we need the following lemmas:    
\begin{lem}     
\label{lem30}
For  $1\ll N\ll x$   and $-1<a<0$,  we have 
\begin{equation}                                                             \label{lem3011}
\Delta_{a}(x) = 
\frac{x^{\frac14 +\frac{a}{2}}}{\pi\sqrt{2}}
\sum_{n\leq N}\frac{\sigma_{a}(n)}{n^{\frac{3}{4}+\frac{a}{2}}}\cos\left(4\pi\sqrt{nx}-\frac{\pi}{4}\right) 
+ \Ocal\left(x^{\frac12 + \varepsilon}N^{-\frac12}\right).    
\end{equation}
\end{lem}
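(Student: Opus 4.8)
The plan is to establish \eqref{lem3011} as a truncated Voronoi-type summation formula for $\Delta_a$, generalizing the classical formula for the Dirichlet divisor problem ($a=0$). The starting point is that $\sum_{n\ge 1}\sigma_a(n)n^{-w}=\zeta(w)\zeta(w-a)$. First I would apply Perron's formula with a truncation at height $T$ (a parameter fixed at the end): with $c=1+\varepsilon$,
$$\sum_{n\leq x}\sigma_a(n)=\frac{1}{2\pi i}\int_{c-iT}^{c+iT}\zeta(w)\zeta(w-a)\frac{x^w}{w}\,dw+\Ocal\!\left(\frac{x^{1+\varepsilon}}{T}\right).$$
Next I would shift the line of integration to $\mathrm{Re}\,w=-\varepsilon$, picking up the simple poles at $w=1$, $w=1+a$ and $w=0$, whose residues are respectively $\zeta(1-a)x$, $\frac{\zeta(1+a)}{1+a}x^{1+a}$ and $-\frac{\zeta(-a)}{2}$. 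Comparing with the definition \eqref{Delta-a}, these cancel exactly, leaving
$$\Delta_a(x)=\frac{1}{2\pi i}\int_{-\varepsilon-iT}^{-\varepsilon+iT}\zeta(w)\zeta(w-a)\frac{x^w}{w}\,dw+\Ocal\!\left(\frac{x^{1+\varepsilon}}{T}\right),$$
where the two horizontal segments of the shifted contour have been absorbed into the error term using the convexity bound for $\zeta$ on $\mathrm{Re}\,w\in[-\varepsilon,1+\varepsilon]$.

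Then I would invoke the functional equation $\zeta(w)=\chi(w)\zeta(1-w)$ and $\zeta(w-a)=\chi(w-a)\zeta(1-w+a)$, where $\chi(w)=\pi^{w-1/2}\Gamma(\tfrac{1-w}{2})/\Gamma(\tfrac{w}{2})$, and expand $\zeta(1-w)\zeta(1-w+a)=\sum_{n\ge1}\sigma_a(n)n^{w-1-a}$ (using $\sigma_{-a}(n)=n^{-a}\sigma_a(n)$). Interchanging summation and integration on the line $\mathrm{Re}\,w=-\varepsilon$ — justified after splitting the $n$-sum at a point of order $T^2/x$ and estimating the tail via Stirling's formula for $\chi$ — leads to
$$\Delta_a(x)=\sum_{n}\frac{\sigma_a(n)}{n^{1+a}}\cdot\frac{1}{2\pi i}\int_{(-\varepsilon)}\chi(w)\chi(w-a)\frac{(nx)^w}{w}\,dw+(\text{error}).$$
The inner Mellin--Barnes integral is a standard special-function transform: writing the ratio of Gamma factors in terms of the Mellin transforms of the Bessel functions $J_a$, $Y_a$ and $K_a$, one finds it equals an explicit linear combination of $J_a(4\pi\sqrt{nx})$, $Y_a(4\pi\sqrt{nx})$ and $K_a(4\pi\sqrt{nx})$ times an elementary power of $nx$ — precisely the object that, for $a=0$, produces the Voronoi series in $\tau(n)$.

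The next step is to insert the large-argument asymptotics $J_a(z),Y_a(z)=\sqrt{\tfrac{2}{\pi z}}\bigl(\cos(z-\tfrac{\pi a}{2}-\tfrac{\pi}{4})+\Ocal(z^{-1})\bigr)$ and the exponential decay of $K_a(z)$. The leading contribution of the $n$-th term then becomes exactly
$$\frac{x^{\frac14+\frac a2}}{\pi\sqrt2}\cdot\frac{\sigma_a(n)}{n^{\frac34+\frac a2}}\cos\!\left(4\pi\sqrt{nx}-\frac{\pi}{4}\right),$$
while the $\Ocal(z^{-1})$ corrections and the $K_a$-contribution yield a faster-decaying remainder. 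Finally I would truncate the series at $N$. The tail $n>N$ is only conditionally convergent, so it must be handled by partial summation against the exponential sums $\sum_{n\le M}\sigma_a(n)e^{4\pi i\sqrt{nx}}$ — equivalently, one keeps exactly this portion inside the truncated Perron integral — and balancing $T\asymp\sqrt{Nx}$ converts the remaining error, together with the Perron error $x^{1+\varepsilon}/T$, into the claimed $\Ocal(x^{1/2+\varepsilon}N^{-1/2})$. I expect this truncation step — controlling the conditionally convergent oscillatory tail uniformly for $1\ll N\ll x$ — to be the main obstacle; the preceding contour shift, functional equation and Bessel identification are routine though lengthy. (An alternative, more elementary route avoids contours: via the hyperbola method one writes $\Delta_a(x)$, after Euler--Maclaurin summation, as a combination of sums $-\sum_{d\le\sqrt x}d^a\vartheta(x/d)$ and $-\sum_{e\le\sqrt x}(x/e)^a\vartheta(x/e)$ plus a negligible remainder, replaces $\vartheta$ by its truncated Fourier expansion, and applies van der Corput's $B$-process to the sums $\sum_d d^a e^{2\pi i hx/d}$, recovering the same trigonometric series.)
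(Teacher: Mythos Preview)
Your outline is a correct and standard route to a truncated Vorono\"i formula for $\Delta_a(x)$: Perron, shift the contour past the poles at $w=1$, $1+a$, $0$ (whose residues you identify correctly), apply the functional equation, recognise the resulting Mellin--Barnes integral as a Bessel combination, feed in the large-argument asymptotics, and finally truncate with $T\asymp\sqrt{Nx}$ to get the error $x^{1/2+\varepsilon}N^{-1/2}$. The alternative hyperbola/van der Corput route you sketch is also viable.

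By contrast, the paper does not prove this lemma at all: it simply records that \eqref{lem3011} is a special case of Theorem~1 of Kiuchi, \emph{Math.\ J.\ Okayama Univ.}\ \textbf{29} (1987), 193--205, and moves on. So there is nothing in the paper to compare your argument against beyond the citation; your proposal supplies exactly the kind of argument that the cited reference carries out, and in that sense it is more than the paper itself provides. If you are writing this up independently, the only point to be careful with is the one you already flag --- the uniform control of the oscillatory tail $n>N$ via partial summation against $\sum_{n\le M}\sigma_a(n)e(2\sqrt{nx})$ --- but this is handled in the cited paper and is routine once one has the exponential-sum estimate for $\sigma_a$.
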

\begin{proof}
The formula \eqref{lem3011} of  Vorono\"{i}'s  type follows from a special case of Theorem~1 in~\cite{k}. 
\end{proof}
\begin{lem}     
\label{lem35}
For  any sufficiently large number $x>1$  
and  fixed number $a$ such that $-1 < a <0$, we have 
\begin{equation}                                                          \label{lem3055}
\sum_{n\leq x}\sigma_{a}(n)^{2} = 
\frac{\zeta^{2}(1-a)\zeta(1-2a)}{\zeta(2-2a)}  x  +  O\left(x^{1+\frac{a}{4}}(\log x)^{2}\right).    
\end{equation}
\end{lem}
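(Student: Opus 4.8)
The plan is to read off the main term of \eqref{lem3055} from the Dirichlet series of $\sigma_{a}^{2}$ and to control the error by an elementary divisor (hyperbola) argument. Comparing Euler factors one has, for $\Re s$ large, the classical Ramanujan-type identity
$$
\sum_{n=1}^{\infty}\frac{\sigma_{a}(n)^{2}}{n^{s}} = \zeta(s)\,G(s), \qquad
G(s) := \frac{\zeta(s-a)^{2}\,\zeta(s-2a)}{\zeta(2s-2a)},
$$
where $G(s)$ is an absolutely convergent Dirichlet series in the half-plane $\Re s>1+a$; writing $G(s)=\sum_{n\ge 1}g(n)n^{-s}$ we get a multiplicative $g=\mu*\sigma_{a}^{2}$ with $\sigma_{a}(n)^{2}=\sum_{d\mid n}g(d)$. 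Hence
$$
\sum_{n\le x}\sigma_{a}(n)^{2} = \sum_{d\le x}g(d)\Big\lfloor\frac{x}{d}\Big\rfloor
= x\sum_{d\le x}\frac{g(d)}{d} \;-\; \sum_{d\le x}g(d)\Big\{\frac{x}{d}\Big\},
$$
so the whole problem reduces to estimating partial sums of $g$.

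The key step is the bound $\sum_{d\le x}|g(d)|\ll x^{1+a}(\log x)^{2}$. To prove it I would dominate $|g|$ pointwise by the nonnegative multiplicative function $h$ obtained from $G$ by replacing $1/\zeta(2s-2a)$ with $\zeta(2s-2a)$ (legitimate since $|\mu(m)|\le 1$), i.e. $\sum_{n}h(n)n^{-s}=\zeta(s-a)^{2}\zeta(s-2a)\zeta(2s-2a)$. Writing $h$ as the associated Dirichlet convolution and summing out the ``square'' variable contributed by $\zeta(2s-2a)$, the estimate comes down to $\sum_{e\le y}|c(e)|\ll y^{1+a}(\log y)^{2}$, where $c$ is the coefficient sequence of $\zeta(s-a)^{2}\zeta(s-2a)$; and this follows by splitting the convolution $c(n)=\sum_{d\ell=n}\big(d^{a}\tau(d)\big)\ell^{2a}$ and using the elementary bounds $\sum_{\ell\le z}\ell^{2a}\ll z^{1+2a}+\log(2z)$ and $\sum_{d\le z}\tau(d)d^{a}\ll z^{1+a}\log z$, valid for $-1<a<0$. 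The common underlying point is that the rightmost singularity of all these series is the double pole of $\zeta(s-a)^{2}$ at $s=1+a$; a Perron/contour argument would give the same conclusion, but the elementary route avoids growth estimates for $\zeta$.

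With this bound in hand the two remaining terms are immediate. Partial summation gives $\sum_{d>x}|g(d)|/d\ll x^{a}(\log x)^{2}$, hence
$$
x\sum_{d\le x}\frac{g(d)}{d} = x\,G(1)+\Ocal\big(x^{1+a}(\log x)^{2}\big),\qquad
G(1)=\frac{\zeta(1-a)^{2}\zeta(1-2a)}{\zeta(2-2a)},
$$
while trivially $\big|\sum_{d\le x}g(d)\{x/d\}\big|\le\sum_{d\le x}|g(d)|\ll x^{1+a}(\log x)^{2}$. Since $1+a\le 1+\tfrac{a}{4}$ for $a<0$, combining these yields \eqref{lem3055} (indeed with the sharper error $\Ocal(x^{1+a}(\log x)^{2})$).

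The main obstacle is exactly the absolute-sum estimate $\sum_{d\le x}|g(d)|\ll x^{1+a}(\log x)^{2}$: one must check that the factor $1/\zeta(2s-2a)$ does not push the effective exponent above $1+a$, and one must treat the borderline behaviour of $\sum_{\ell\le z}\ell^{2a}$ (convergent, logarithmic, or of size $z^{1+2a}$) according to the sign of $1+2a$ — the extra $(\log y)^{2}$ in the intermediate bound being there precisely to absorb the case $a=-\tfrac12$.
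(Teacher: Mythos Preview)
Your argument is correct. The paper does not actually prove this lemma at all: its ``proof'' is the single sentence that \eqref{lem3055} follows from Lemma~1 in \cite{KT1}. Your approach is therefore genuinely different from (and more self-contained than) the paper's: you invoke the Ramanujan identity
\[
\sum_{n\ge 1}\frac{\sigma_a(n)^2}{n^s}=\frac{\zeta(s)\zeta(s-a)^2\zeta(s-2a)}{\zeta(2s-2a)},
\]
write $\sigma_a^2=\mathbf{1}*g$, and control $\sum_{d\le x}|g(d)|$ by dominating $|g|$ pointwise by the nonnegative convolution with generating series $\zeta(s-a)^2\zeta(s-2a)\zeta(2s-2a)$; the resulting bound $\sum_{d\le x}|g(d)|\ll x^{1+a}(\log x)^{2}$ is exactly the secondary-pole exponent and is obtained by a purely elementary hyperbola/partial-summation argument. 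This in fact gives the error $O\!\left(x^{1+a}(\log x)^{2}\right)$, strictly sharper than the $O\!\left(x^{1+a/4}(\log x)^{2}\right)$ quoted in the paper, so nothing is lost. What the citation to \cite{KT1} buys the authors is brevity and a uniform statement; what your route buys is a transparent, self-contained proof that makes clear the main term comes from $G(1)$ and the error from the double pole of $\zeta(s-a)^{2}$ at $s=1+a$, at the mild price of treating the three regimes $a\gtrless -\tfrac12$ for $\sum_{\ell\le z}\ell^{2a}$ separately.
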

\begin{proof}
The formula \eqref{lem3055} follows from  Lemma~1 in~\cite{KT1}. 
\end{proof}
\begin{lem}       
\label{lem40}
Let $F(x)$ be a real differentiable function such that $F'(x)$ is monotonic and $F'(x)\geq m>0$ or 
$F'(x)\leq -m<0$ for $a\leq x\leq b$, and $G$ is a positive, monotonic function for $a\leq x \leq b$ such that 
$|G(x)|\leq G$.  Then 
\begin{equation}                                                          \label{lem4011}
\left|\int_{a}^{b}G(x){\rm e}^{iF(x)}dx\right| \leq 4~\frac{G}{m}. 
\end{equation}
\end{lem}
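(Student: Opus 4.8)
This is the classical van der Corput first--derivative test for oscillatory integrals, and the plan is the standard one: settle the case $G\equiv 1$ first, and then deduce the general case by integrating by parts against an antiderivative of ${\rm e}^{iF}$. For $G\equiv 1$ --- indeed for any subinterval $[u,v]\subseteq[a,b]$, which is what will be needed below --- I would write $\int_{u}^{v}{\rm e}^{iF(x)}\,dx=\int_{u}^{v}\frac{1}{iF'(x)}\,d\!\left({\rm e}^{iF(x)}\right)$ (legitimate since $F$ is differentiable, so ${\rm e}^{iF}$ is of bounded variation on $[u,v]$) and integrate by parts in the Riemann--Stieltjes sense:
$$ \int_{u}^{v}{\rm e}^{iF(x)}\,dx=\left[\frac{{\rm e}^{iF(x)}}{iF'(x)}\right]_{u}^{v}-\frac{1}{i}\int_{u}^{v}{\rm e}^{iF(x)}\,d\!\left(\frac{1}{F'(x)}\right). $$
Since $F'$ has constant sign with $|F'|\ge m$, we have $|1/F'|\le 1/m$, so the bracketed term is $\le 2/m$; since $F'$ is monotonic, so is $1/F'$, hence $d(1/F')$ has constant sign and $\int_{u}^{v}|d(1/F')|=|1/F'(v)-1/F'(u)|\le 1/m$, so the remaining integral is $\le 1/m$. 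This gives $\bigl|\int_{u}^{v}{\rm e}^{iF(x)}\,dx\bigr|\le 3/m\le 4/m$, which is \eqref{lem4011} for $G\equiv 1$ and, applied on subintervals, furnishes the uniform bound used next.

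To pass to a general positive monotone $G$, set $\Phi(x)=\int_{a}^{x}{\rm e}^{iF(t)}\,dt$ and $\Psi(x)=\int_{x}^{b}{\rm e}^{iF(t)}\,dt$, so that $|\Phi(x)|,|\Psi(x)|\le 4/m$ on $[a,b]$ by the previous step. If $G$ is non-increasing I would integrate by parts with $\Phi$, which vanishes at $a$,
$$ \int_{a}^{b}G(x){\rm e}^{iF(x)}\,dx=G(b)\Phi(b)-\int_{a}^{b}\Phi(x)\,dG(x), $$
and bound the right side by $\tfrac{4}{m}G(b)+\tfrac{4}{m}\bigl(G(a)-G(b)\bigr)=\tfrac{4}{m}G(a)=\tfrac{4G}{m}$, using that $\int_{a}^{b}|dG|=G(a)-G(b)$; if $G$ is non-decreasing I would use $\Psi$, which vanishes at $b$, instead,
$$ \int_{a}^{b}G(x){\rm e}^{iF(x)}\,dx=G(a)\Psi(a)+\int_{a}^{b}\Psi(x)\,dG(x), $$
which is $\le \tfrac{4}{m}G(a)+\tfrac{4}{m}\bigl(G(b)-G(a)\bigr)=\tfrac{4}{m}G(b)=\tfrac{4G}{m}$. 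When $G$ is merely monotone and not differentiable, the $dG$-integrals are interpreted as Riemann--Stieltjes integrals and the argument is unchanged, so \eqref{lem4011} follows in every case.

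The only delicate point is that ${\rm e}^{iF}$ is complex-valued, so one cannot directly invoke the second mean value theorem for integrals to pull $G$ outside; passing to an antiderivative of ${\rm e}^{iF}$ and integrating by parts is the usual workaround. The subtlety is then the choice of \emph{which} antiderivative to use: it must vanish at the endpoint towards which $G$ decreases, since with the opposite choice the surviving boundary term would carry $\max(G(a),G(b))$ together with the total variation of $G$ and the constant would double to $8/m$, whereas the choice above leaves only the smaller endpoint value and the bound collapses back to $4G/m$. Everything else is routine bookkeeping.
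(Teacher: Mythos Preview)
Your argument is correct and is the standard proof of the first--derivative test (essentially the one in Titchmarsh/Ivi\'{c}). The paper itself does not supply a proof at all: it simply cites (2.3) in Ivi\'{c}'s book, so there is nothing to compare beyond noting that you have filled in what the authors left as a reference.
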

\begin{proof}
The  first derivative test of exponential integral follows from  (2.3) in   \cite{I}. 
\end{proof}
\begin{lem}        
\label{lem70}
Suppose that $a_n$   is an  arbitrary complex  sequence.  
For any sufficiently large  number $x>1$, we have      
\begin{equation}                                                             \label{Hilbert}
\left|\sum_{\substack{m,n\leq x \\ m\neq n}}\frac{a_{m}{\overline{a_{n}}}}{m-n}\right|
\leq \pi \sum_{n\leq x}|a_{n}|^{2}. 
\end{equation}
\end{lem}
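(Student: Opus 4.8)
Inequality~\eqref{Hilbert} is the classical Hilbert inequality, and the plan is to obtain it --- sharp constant $\pi$ included --- by a short Fourier argument on the circle. First I would set $N=[x]$ and form the trigonometric polynomial $f(t)=\sum_{n\le x}a_{n}e^{int}$ for $t\in[0,2\pi]$; the index range being finite, no convergence question arises. Everything hinges on one classical expansion, that of the sawtooth wave: for $0<t<2\pi$ one has $\sum_{k\ge1}\frac{\sin kt}{k}=\frac{\pi-t}{2}$, which rearranges to $i(t-\pi)=-\sum_{k\neq0}\frac{e^{ikt}}{k}$; equivalently, the $2\pi$-periodic function $w(t):=i(t-\pi)$ on $(0,2\pi)$ has Fourier coefficients $\widehat{w}(k)=-1/k$ for $k\neq0$ and $\widehat{w}(0)=0$.

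The key step is to evaluate $I:=\frac{1}{2\pi}\int_{0}^{2\pi}|f(t)|^{2}\,w(t)\,dt$ by writing $|f(t)|^{2}=\sum_{m,n\le x}a_{m}\overline{a_{n}}\,e^{i(m-n)t}$ and integrating term by term (a legitimate move since the sum is finite). The $(m,n)$ term produces $a_{m}\overline{a_{n}}\,\widehat{w}(n-m)$, which equals $\frac{a_{m}\overline{a_{n}}}{m-n}$ when $m\neq n$ and vanishes when $m=n$; hence
\[
I=\sum_{\substack{m,n\le x\\ m\neq n}}\frac{a_{m}\overline{a_{n}}}{m-n}.
\]
It then remains only to estimate $I$ crudely: since $|w(t)|=|t-\pi|\le\pi$ on $[0,2\pi]$, Parseval's identity $\frac{1}{2\pi}\int_{0}^{2\pi}|f(t)|^{2}\,dt=\sum_{n\le x}|a_{n}|^{2}$ gives $|I|\le\pi\sum_{n\le x}|a_{n}|^{2}$, which is exactly~\eqref{Hilbert}.

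The only point demanding any care --- and the reason for centring the kernel at $\pi$ rather than using a bare odd function of $t$ --- is the simultaneous need for $\int_{0}^{2\pi}w(t)\,dt=0$ (this is what annihilates the diagonal $m=n$ and so removes any divergence) and for $\|w\|_{\infty}=\pi$ (this is what makes the constant best possible). I do not expect genuine difficulty beyond bookkeeping the sign in $\widehat{w}(k)=-1/k$; if one wishes to avoid discussing the pointwise value of the sawtooth series at the endpoints $0$ and $2\pi$, an entirely acceptable alternative is to cite the Montgomery--Vaughan inequality in the stated form, in the same spirit as the source citations used for Lemmas~\ref{lem1}, \ref{lem30} and \ref{lem40}.
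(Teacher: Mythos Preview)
Your argument is correct and is in fact the classical derivation of Hilbert's inequality with the sharp constant~$\pi$: the Fourier coefficients of the sawtooth kernel $w(t)=i(t-\pi)$ are computed correctly, the identification $I=\sum_{m\neq n}a_m\overline{a_n}/(m-n)$ is valid termwise, and the final bound via $\|w\|_\infty=\pi$ and Parseval is immediate.

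By contrast, the paper does not prove the lemma at all: it simply cites (5.6) of Ivi\'{c}'s monograph~\cite{I}, exactly as it does for Lemmas~\ref{lem1}, \ref{lem30}, \ref{lem35} and~\ref{lem40}. So your route is genuinely different in that it is self-contained rather than an appeal to the literature. What your approach buys is transparency --- one sees precisely why the constant is $\pi$ and why the diagonal terms vanish --- at the cost of a few lines; what the paper's approach buys is brevity, which is appropriate here since Lemma~\ref{lem70} is only used as a black box in estimating the off-diagonal integrals $\int S_{2,a}$ and $\int Y_{2,a}$. Your closing remark that one may simply cite the inequality is therefore exactly what the paper does.
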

\begin{proof}
The  Hilbert inequality follows from (5.6) in~\cite{I}.   
\end{proof}   
 \section{Proofs of  Theorems \ref{th1} and \ref{th2} }   

\subsection{Proof of  Theorem \ref{th1}  } 
We take $f=\id_{1+a}$, for any fixed negative number $a\ (-1<a<0)$,  
into  \eqref{ikiuchi} to get 
\begin{align}                                     \label{th1-1}
   M_{r}(x;\id_{1+a})   
&=\frac12 \sum_{n\leq x}n^{a}    
+ \frac{1}{r+1}\sum_{n\leq x}\left(\frac{\mu}{\rm id}*{\rm id}_{a}*{\bf 1}\right)(n) \nonumber \\
&\qquad     + \frac{1}{r+1}\sum_{m=1}^{[r/2]} \binom{r+1}{2m}B_{2m}
\sum_{n\leq x}\left(\frac{\mu}{\rm id}*{\rm id}_{a}*\frac{1}{\id_{2m}}\right)(n)  \nonumber \\
&=\frac12 \sum_{n\leq x}n^{a}    
+ \frac{1}{r+1}\sum_{n\leq x}\left(\frac{\mu}{{\rm id}}*\sigma_{a}\right)(n) \nonumber \\
&\qquad    + \frac{1}{r+1}\sum_{m=1}^{[r/2]} \binom{r+1}{2m} B_{2m}
\sum_{dl\leq x} \left(\frac{\mu}{\id}*\frac{1}{\id_{2m}}\right)(d) l^{a} \nonumber \\
&:= I_{1}+ I_{2} +I_{3},
\end{align} 
say. From~\eqref{lem1-apo}  we get    
$$                                                                            
I_{1}= \frac{1}{2(1+a)}x^{1+a} + \frac{\zeta(-a)}{2} + \Ocal_{a}\left(x^{a}\right). 
$$  
We use  \eqref{lem2-sigma1} and  \eqref{lem2-la} to  deduce  the formulas    
\begin{equation*}
 I_{2} 
=\frac{\zeta(1-a)}{(r+1)\zeta(2)}x 
+ \frac{\zeta(1+a)}{(1+a)(r+1)\zeta(2+a)} x^{1+a} 
 +  \frac{1}{r+1} \sum_{d\leq x }\frac{\mu(d)}{d} \Delta_{a}\left(\frac{x}{d}\right)  
  + \Ocal_{r,a}\left(\delta(x)\right),   
\end{equation*} 
and 
\begin{equation*}
 I_{3} 
=\frac{x^{1+a}}{(1+a)(r+1)\zeta(2+a)} 
  \sum_{m=1}^{[r/2]}\binom{r+1}{2m}B_{2m} \zeta(2m+a+1)+ \Ocal_{r,a}\left(1\right).    
\end{equation*} 
On combining the above formulas  with~\eqref{th1-1}, we complete the proof of   Theorem~\ref{th1}.   
\subsection{Proof of Theorem~\ref{th2} }
We take $f=\phi_{1+a}$, for $-1<a<0$, into  \eqref{ikiuchi} to get 
\begin{align}    
\label{th2-1}
 M_{r}(x;\phi_{1+a})   
&=\frac12 \sum_{n\leq x}\frac{\phi_{1+a}(n)}{n}    
+ \frac{1}{r+1}\sum_{dl\leq x} \frac{(\phi_{1+a}*\mu)(d)}{d}  \nonumber \\
&\qquad     + \frac{1}{r+1}\sum_{m=1}^{[r/2]} \binom{r+1}{2m} B_{2m}
\sum_{dl\leq x} \frac{(\phi_{1+a}*\mu)(d)}{d} \frac{1}{l^{2m}}  \nonumber \\
&:= J_{1}+ J_{2} +J_{3},
\end{align} 
say.  Using \eqref{lem1-apo}, we find that     
\begin{eqnarray*}                       
J_{1}
&=&\frac12 \sum_{l\leq x}\frac{\mu(l)}{l}\left(\frac{1}{1+a}\left(\frac{x}{l}\right)^{1+a}+\zeta(-a)+\Ocal_{a}\left(\left(\frac{x}{l}\right)^{a}\right)\right) 
\\
&=& \frac{1}{2(1+a)\zeta(2+a)}x^{1+a} + \Ocal_{a}\left(1\right). 
\end{eqnarray*}
We use \eqref{lem2-mu} and \eqref{lem2-l2m} to  deduce the formulas    
\begin{multline*}
 J_{2} 
=\frac{\zeta(1-a)}{(r+1)\zeta(2)^2}x 
+ \frac{\zeta(1+a)}{(1+a)(r+1)\zeta(2+a)^2} x^{1+a}  \\
 + \frac{1}{r+1} \sum_{d\leq x }\frac{(\mu*\mu)(d)}{d} \Delta_{a}\left(\frac{x}{d}\right)   + \Ocal_{r,a}\left((\log x)^2\right),   
\end{multline*} 
and 
\begin{equation*}
 J_{3} 
=\frac{x^{1+a}}{(1+a)(r+1)\zeta(2+a)^2} 
  \sum_{m=1}^{[r/2]}\binom{r+1}{2m}B_{2m} \zeta(2m+a+1)+ \Ocal_{r,a}\left((\log x)^2\right).    
\end{equation*} 
Substituting  the above into \eqref{th2-1}, we complete the proof of   \eqref{sum_th2}.  \\

Now, we replace $f$ by $\psi_{1+a}$,  for $-1<a<0$,  in \eqref{ikiuchi} to get  
\begin{align}                                                                                           \label{th2-2}
   M_{r}(x;\psi_{1+a})   
&=\frac12 \sum_{n\leq x}\frac{\psi_{1+a}(n)}{n}    
+ \frac{1}{r+1}\sum_{dl\leq x} \frac{(\psi_{1+a}*\mu)(d)}{d}  \nonumber \\
&\qquad     + \frac{1}{r+1}\sum_{m=1}^{[r/2]} \binom{r+1}{2m} B_{2m}
\sum_{dl\leq x} \frac{(\psi_{1+a}*\mu)(d)}{d} \frac{1}{l^{2m}}  \nonumber \\
&:= L_{1}+ L_{2} +L_{3},
\end{align} 
say. Using \eqref{lem1-apo},  we find that      
\begin{equation*}                             
L_{1}= \frac{\zeta(2+a)}{2(1+a)\zeta(4+2a)}x^{1+a} + \Ocal_{a}\left(1\right). 
\end{equation*}
Applying \eqref{lem2-psi} and  \eqref{lem2-psi2m}, we get      
\begin{multline*}
 L_{2} 
=\frac{\zeta(1-a)}{(r+1)\zeta(4)}x 
+ \frac{\zeta(1+a)}{(1+a)(r+1)\zeta(4+2a)} x^{1+a}   \\
  + \frac{1}{r+1} \sum_{d\leq x }\frac{(\mu*\mu)(d)}{d} \Delta_{a}\left(\frac{x}{d}\right)   + \Ocal_{r,a}\left((\log x)^2\right),   
\end{multline*} 
and 
\begin{equation*}
 L_{3} 
=\frac{x^{1+a}}{(1+a)(r+1)\zeta(4+2a)} 
  \sum_{m=1}^{[r/2]}\binom{r+1}{2m}B_{2m} \zeta(2m+a+1)   + \Ocal_{r,a}\left((\log x)^2\right).    
\end{equation*} 
Substituting  the above into \eqref{th2-2}, we complete the proof of   \eqref{sum_th22}.  

\section{Proof of Theorem \ref{th5}} %


We take $f={\rm id}_{s+a}$ into \eqref{kiuchi} to  obtain 
\begin{align}                                                                                                        \label{th5-1}
   M_{r}^{(s)}(x;{\rm id}_{s+a}) 
&= \frac{1}{2}\sum_{n\leq x}n^{a} 
+  \frac{1}{r+1} \sum_{dl\leq x} \frac{\mu(d)}{d^s}\sigma_{a}(l)    \\
& \qquad    
+ \frac{1}{r+1} \sum_{m=1}^{[r/2]} \binom{r+1}{2m} B_{2m} 
  \sum_{dl\leq x} \left(\frac{\mu}{{\rm id}_{s}}*\frac{1}{{\rm id}_{2ms}}\right)(d) l^{a}   \nonumber \\
&:= I_{1} + I_{2} + \frac{1}{r+1} \sum_{m=1}^{[r/2]} \binom{r+1}{2m} B_{2m} I_{3},   \nonumber 
\end{align} 
say. From (\ref{lem1-apo}) we have  
\begin{equation}                                                                                                         \label{th5-2}
I_{1}= \frac{1}{2(1+a)}x^{1+a} +\frac{\zeta(-a)}{2} + \Ocal_{a}\left(x^{a}\right).                                        
\end{equation}
For $I_{2}$,  we  use \eqref{Delta-a} and the formula 
$$
\sum_{d\leq x}\frac{\mu(d)}{d^{s+1}} = \frac{1}{\zeta(s+1)} + \Ocal_{s}\left(x^{-s}\right),
$$
to get    
\begin{align}                                                                                                           \label{th5-3}    
 I_{2} 
&= \frac{\zeta(1-a)}{r+1} x \sum_{d\leq x}\frac{\mu(d)}{d^{s+1}} 
 + \frac{\zeta(1+a)}{(r+1)(1+a)}x^{1+a} \sum_{d\leq x} \frac{\mu(d)}{d^{s+1+a}} - \frac{\zeta(-a)}{2(r+1)}\sum_{d\leq x} \frac{\mu(d)}{d^{s}} 
 \nonumber  \\
&+ \frac{1}{r+1}\sum_{d\leq x}\frac{\mu(d)}{d^s}\Delta_{a}\left(\frac{x}{d}\right)     \nonumber  \\
&= \frac{\zeta(1-a)}{(r+1)\zeta(s+1)} x  
 + \frac{\zeta(1+a)}{(r+1)(1+a)\zeta(s+1+a)}x^{1+a}   - \frac{\zeta(-a)}{2(r+1)\zeta(s)}  \nonumber  \\
& + \frac{1}{r+1}\sum_{d\leq x}\frac{\mu(d)}{d^s}\Delta_{a}\left(\frac{x}{d}\right)  +\Ocal_{r,s}\left(x^{1-s}\right).  
\end{align}
As for $I_{3}$,  we use  \eqref{lem1-apo} to get       
\begin{eqnarray}                                                                                                        \label{th5-4}
I_{3} 
&=& \frac{x^{1+a}}{1+a}\sum_{dl\leq x}\frac{\mu(d)}{d^{s+a+1}}\frac{1}{l^{2ms+a+1}} 
 + \zeta(-a)\sum_{dl\leq x}\frac{\mu(d)}{d^{s}}\frac{1}{l^{2ms}}  + \Ocal_{s,a}\left(x^{a}\sum_{dl\leq x}\frac{1}{d^{s+a}}\frac{1}{l^{2ms+a}}\right) \nonumber 
\\&=& \frac{\zeta(2ms+a+1)}{(1+a)\zeta(s+a+1)} x^{1+a} + \frac{\zeta(-a)\zeta(2ms)}{\zeta(s)} +  \Ocal_{s,a}\left(x^{a}\right).  
\end{eqnarray}
Therefore, putting everything together  we  obtain  \eqref{sum_th5}.  

\section{Proofs of Theorems \ref{th71} and \ref{th72}} 

From \eqref{Sumaia-K}, we have  
\begin{eqnarray}     
\label{Sumaia-KKK}
 M_{r}^{(s)}(x;{\rm id}_{s}*h)   
 &=& \frac{1}{2}\sum_{dl\leq x}\frac{h(d)}{d^s} 
+ \frac{1}{r+1}\sum_{dl\leq x} \frac{(h*\mu)(d)}{d^s} \tau(l) \nonumber \\&&
+ \frac{1}{r+1}\sum_{m=1}^{[r/2]}\binom{r+1}{2m} B_{2m}\sum_{dl\leq x}\frac{(h*\phi_{s})(d)}{d^s}\frac{1}{l^{2ms}}  \nonumber \\
&:=& I_{1}+I_{2}+I_{3},  
\end{eqnarray}
say.  Since 
$
h(d) = \Ocal\left(d^{\varepsilon}\right),
$ for any small number $\varepsilon>0$,  then  we have   
\begin{equation}   
\label{S-KKK1}
I_{1}
 = \frac{H_{h}(s+1)}{2} x - \sum_{d\leq x}\frac{h(d)}{d^{s}}\vartheta\left(\frac{x}{d}\right) 
 - \frac{H_{h}(s)}{2} + \Ocal_{s}\left(x^{1-s+\varepsilon}\right),                                         
\end{equation} 
where  $\vartheta(x)=x- [x] -\frac{1}{2}$. We use \eqref{Dirichlet} and the identity 
$$
H'_{h}(s+1)=-\sum_{n=1}^{\infty}\frac{h(d)\log d}{d^{s+1}}
$$
to obtain 
\begin{align}                                                                                            \label{S-KKK2}
I_{2}
&= \frac{x\log x}{r+1}\sum_{d\leq x} \frac{(h*\mu)(d)}{d^{s+1}}   
 - \frac{x}{r+1}\sum_{d\leq x} \frac{(h*\mu)(d)}{d^{s+1}} \log d  + \frac{(2\gamma - 1)x}{r+1}\sum_{d\leq x} \frac{(h*\mu)(d)}{d^{s+1}}   
\nonumber \\
&  + \frac{1}{r+1}\sum_{d\leq x} \frac{(h*\mu)(d)}{d^s} \Delta\left(\frac{x}{d}\right) \nonumber \\
&= \frac{H_{h}(s+1)}{(r+1)\zeta(s+1)} x\log x + 
   \left(\frac{H'_{h}(s+1)}{(r+1)\zeta(s+1)}-\frac{H_{h}(s+1)\zeta'(s+1)}{(r+1)\zeta(s+1)^2}\right)x  \nonumber  \\
&+ \frac{2(\gamma -1)H_{h}(s+1)}{(r+1)\zeta(s+1)}x  + \frac{1}{r+1}\sum_{d\leq x} \frac{(h*\mu)(d)}{d^s} \Delta\left(\frac{x}{d}\right) 
 + \Ocal_{s,r}\left(x^{1-s+\varepsilon}\right).
\end{align}
Using the formula  
\begin{equation*}                                                                                                  
\sum_{l\leq x}\sigma_{-2ms}(l)  = 
\zeta(2ms+1) x -\frac{\zeta(2ms)}{2}  + \Ocal_{m,s}\left(x^{1-2ms}\right),
\end{equation*}
we have 
\begin{align}                                                                                            \label{S-KKK3}
I_{3}
&= \frac{1}{r+1}\sum_{m=1}^{[r/2]}\binom{r+1}{2m}B_{2m} \sum_{d\leq x} \frac{(h*\mu)(d)}{d^{s}}\sum_{l\leq x/d}\sigma_{-2ms}(l) \nonumber \\ 
&= \frac{x}{r+1}\sum_{m=1}^{[r/2]}\binom{r+1}{2m}B_{2m} \zeta(2ms+1) \sum_{d\leq x} \frac{(h*\mu)(d)}{d^{s+1}}  \nonumber \\
&- \frac{1}{2(r+1)}\sum_{m=1}^{[r/2]}\binom{r+1}{2m}B_{2m}\zeta(2ms) \sum_{d\leq x} \frac{(h*\mu)(d)}{d^{s}}
+  \Ocal_{r,s}\left(x^{1-s+\varepsilon}\right) \nonumber \\
&= \frac{H_{h}(s+1)x}{(r+1)\zeta(s+1)} \sum_{m=1}^{[r/2]}\binom{r+1}{2m}B_{2m} \zeta(2ms+1)   \nonumber \\
&- \frac{H_{h}(s)}{2(r+1)\zeta(s)} \sum_{m=1}^{[r/2]}\binom{r+1}{2m}B_{2m}\zeta(2ms) 
+  \Ocal_{r,s}\left(x^{1-s+\varepsilon}\right). 
\end{align}
Substituting \eqref{S-KKK1}, \eqref{S-KKK2}  and \eqref{S-KKK3}  into \eqref{Sumaia-KKK}, 
we get  the formula \eqref{sum_th770}. Similarly, we  can obtain  the formula \eqref{sum_th777}. 
\section{Proofs of  Theorems  \ref{th3} and \ref{th4} }    
\subsection{Proof of  Theorem \ref{th3} } 
To get the mean value formula of $K_{r}(x;{\rm id}_{1+a})$, 
we follow the  method  used  in~\cite{K1}.    
Let $a$ be any fixed  number satisfying $- \frac14 < a <0$. 
We recall that   
\begin{equation}                                                                                                     \label{Lr1}
K_{r}(x;\id_{1+a}) =  \frac{1}{r+1} \sum_{n\leq x} \frac{\mu(n)}{n}\Delta_{a}\left(\frac{x}{n}\right)
        +  \Ocal_{r,a}\left(1\right).
\end{equation}
We want to find  the corresponding formula for the  integral 
$$
\int_{T}^{2T}K_{r}(x;{\rm id}_{1+a})^{2}dx.
$$  
We take  $y=T^{1-\varepsilon}$ for any small  number $\varepsilon>0$.
From \eqref{Lr1}, we have 
\begin{eqnarray*}
K_{r}(x;\id_{1+a}) &=& \frac{1}{r+1}
\left( \sum_{n\leq y} \frac{\mu(n)}{n}\Delta_{a}\left(\frac{x}{n}\right) 
+ \sum_{y<n\leq x} \frac{\mu(n)}{n}\Delta_{a}\left(\frac{x}{n}\right) \right) 
+ \Ocal_{r,a}\left(1\right)  \\
&:=& \frac{1}{r+1}\left(F_{1,a}(x)+F_{2,a}(x)\right) +  \Ocal_{r,a}\left(1\right),
\end{eqnarray*}
say. 
For $F_{2,a}(x)$, it is easy to see that  
$
F_{2,a}(x) \ll \sum_{y<n\leq x} \frac{1}{n} \Delta_{a}\left(\frac{x}{n}\right) 
\ll    T^{\varepsilon}  
$ 
where we used the estimate 
$
\Delta_{a}\left(\frac{x}{n}\right) \ll   \left(\frac{x}{y}\right)^{\frac{1+a}{3}+\varepsilon} \ll T^{\varepsilon} 
$  
with 
$
T\leq x\leq 2T.
$
As for $F_{1,a}(x)$,  we   take $N=y$ into \eqref{lem3011} 
to deduce     
$$
F_{1,a}(x) = G_{a}(x) + \Ocal_{a}\left(T^{\varepsilon}\right), 
$$ 
where 
$$
G_{a}(x) = \frac{x^{\frac14 +\frac{a}{2}}}{\sqrt{2}\pi} \sum_{m\leq y}\frac{\mu(m)}{m^{\frac{5}{4}+\frac{a}{2}}} 
 \sum_{n\leq y} \frac{\sigma_{a}(n)}{n^{\frac{3}{4}+\frac{a}{2}}}\cos\left(4\pi\sqrt{\frac{nx}{m}}-\frac{\pi}{4}\right).  
$$
Then,  we have 
\begin{equation}                                                                                                               \label{III}
K_{r}(x;\id_{1+a})  = \frac{1}{r+1} G_{a}(x) + \Ocal_{r,a}\left(T^{\varepsilon}\right).
\end{equation}
Now, we  consider the mean square formula of $G_{a}(x)$,  that is  
\begin{eqnarray*}                                                                                            
G_{a}(x)^{2} &=& 
\frac{x^{\frac12 +a}}{2\pi^2}\sum_{m_{1},m_{2}\leq y}\frac{\mu(m_1)\mu(m_2)}{(m_{1}m_{2})^{\frac{5}{4}+\frac{a}{2}}}
                      \sum_{n_{1},n_{2}\leq y}\frac{\sigma_{a}(n_{1})\sigma_{a}(n_{2})}{(n_{1}n_{2})^{\frac{3}{4}+\frac{a}{2}}}  \times \\&&  
\times \cos\left(4\pi\sqrt{\frac{n_{1}x}{m_{1}}}-\frac{\pi}{4}\right) \cos\left(4\pi\sqrt{\frac{n_{2}x}{m_{2}}}-\frac{\pi}{4}\right) \\ 
& :=& S_{1,a}(x) + S_{2,a}(x) + S_{3,a}(x), 
\end{eqnarray*}
say,  where  
\begin{equation*}
S_{1,a}(x) = \frac{x^{\frac{1}{2}+a}}{4\pi^2}\sum_{\substack{m_{1},m_{2},n_{1},n_{2} \leq y  \\ n_{1}m_{2} = n_{2}m_{1}}}  
            \frac{\mu(m_1)\mu(m_2)}{(m_{1}m_{2})^{\frac{5}{4}+\frac{a}{2}}}
\frac{\sigma_{a}(n_1)\sigma_{a}(n_2)}{(n_{1}n_{2})^{\frac{3}{4}+\frac{a}{2}}},    
\end{equation*}
\begin{multline*}
S_{2,a}(x)=  \frac{x^{\frac{1}{2}+a}}{4\pi^2}\sum_{\substack{m_{1},m_{2},n_{1},n_{2} \leq y  \\ n_{1}m_{2} \neq  n_{2}m_{1}}}  
 \frac{\mu(m_1)\mu(m_2)}{(m_{1}m_{2})^{\frac{5}{4}+\frac{a}{2}}}
 \frac{\sigma_{a}(n_1)\sigma_{a}(n_2)}{(n_{1}n_{2})^{\frac{3}{4}+\frac{a}{2}}}  \times \\
 \times   \cos\left(4\pi\sqrt{x}\left(\sqrt{\frac{n_1}{m_1}}-\sqrt{\frac{n_2}{m_2}}\right)\right),           \end{multline*}
and
\begin{multline*}
S_{3,a}(x)=  \frac{x^{\frac{1}{2}+a}}{4\pi^2}\sum_{m_{1},m_{2},n_{1},n_{2} \leq y}  
 \frac{\mu(m_1)\mu(m_2)}{(m_{1}m_{2})^{\frac{5}{4}+\frac{a}{2}}}
 \frac{\sigma_{a}(n_1)\sigma_{a}(n_2)}{(n_{1}n_{2})^{\frac{3}{4}+\frac{a}{2}}}  \times  \\
 \times   \sin\left(4\pi\sqrt{x}\left(\sqrt{\frac{n_1}{m_1}}+\sqrt{\frac{n_2}{m_2}}\right)\right).                        
\end{multline*} 
Firstly, we have 
\begin{equation}                                                            \label{SSS1} 
\int_{T}^{2T}S_{1,a}(x)dx = \frac{B_{a}(T)}{2(3+2a)\pi^2}\left((2T)^{\frac{3}{2}+a} - T^{\frac{3}{2}+a}\right),   
\end{equation}
where 
$$
B_{a}(T) =\sum_{\substack{m_{1}, m_{2}, n_{1}, n_{2} \leq y  \\  n_{1}m_{2} = n_{2}m_{1}}}  
 \frac{\mu(m_1)\mu(m_2)}{(m_{1}m_{2})^{\frac{5}{4}+\frac{a}{2}}}  \frac{\sigma_{a}(n_1)\sigma_{a}(n_2)}{(n_{1}n_{2})^{\frac{3}{4}+\frac{a}{2}}}.  
$$
In order to evaluate $B_{a}(T)$, one can write   
\begin{eqnarray*}
B_{a}(T) &=&  \sum_{\substack{m_{1},m_{2},n_{1},n_{2} \leq y  \\ n_{1}m_{2} = n_{2}m_{1}}}  
            \frac{\mu(m_1)\mu(m_2)(m_{1}m_{2})^{-\frac{1+a}{2}}\sigma_{a}(n_1)\sigma_{a}(n_2)}{(n_{1}m_{2}n_{2}m_{1})^{3/4}} \\
     & =& \sum_{n\leq y^2} \frac{h_{a}^{2}(n;y)}{n^{\frac{3}{2}}},  
\end{eqnarray*}
where
$$
h_{a}(n;y)= \sum_{\substack{n=ml \\ m,l\leq y}} \frac{\mu(m)}{m^{\frac{1+a}{2}}} \sigma_{a}(l).  
$$
Now, let 
$$
h_{a}(n)= \sum_{n=ml} \frac{\mu(m)}{m^{\frac{1+a}{2}}}\sigma_{a}(l), 
\qquad 
\text{and}
\qquad  
{\widetilde{h_{a}}}(n)= \sum_{n=ml} \frac{1}{m^{\frac{1+a}{2}}} \sigma_{a}(l).
$$
It follows that 
$
h_{a}(n;y)=h_{a}(n)
$ for $n\leq y$,   
$
|h_{a}(n;y)|\leq {\widetilde{h_{a}}}(n)
$
 and  
$
|h_{a}(n)|\leq {\widetilde{h_{a}}}(n)
$
 for $n\geq 1$. 
Since 
$
{\widetilde{h_{a}}}(n)\ll n^{\varepsilon}
$ 
and 
$$
 {\widetilde{h_{a}}}(p) = p^{-\frac12 -\frac{a}{2}}\sum_{d|p}d^{\frac12 +\frac{a}{2}}\sigma_{a}(d)
 =  1 + p^{a} +p^{-\frac12 -\frac{a}{2}} < 3,
$$
we use  Shiu's theorem (see~\cite{S}) and the prime number theorem  to obtain 
\begin{eqnarray*}  
     \sum_{n\leq x}{\widetilde{h_{a}}}(n)^{2} 
&\ll& \frac{x}{\log x} {\rm exp}\left(\sum_{p\leq x}\frac{{\widetilde{h_{a}}}(p)^{2}}{p}\right)   \\
&  \ll& \frac{x}{\log x} {\rm exp}\left(\sum_{p\leq x}\frac{9}{p}\right)  
\ll x^{}(\log x)^{8}.  
\end{eqnarray*}  
Hence   
\begin{eqnarray*}
B_{a}(T) 
& =& \sum_{n=1}^{\infty}\frac{h_{a}(n)^{2}}{n^{3/2}} + \Ocal_{a}\left(\sum_{n>y}\frac{{\widetilde{h_{a}}}(n)^{2}}{n^{3/2}}\right)  \\
&=& \sum_{n=1}^{\infty}\frac{h_{a}(n)^{2}}{n^{3/2}} + \Ocal_{a}\left(y^{-1/2}(\log y)^{8}  \right)  \\
&=& \sum_{n=1}^{\infty}\left(\left(\frac{\mu}{{\rm id}_{\frac{1+a}{2}}}*\sigma_{a}\right)(n)\right)^{2}\frac{1}{n^{3/2}} + \Ocal_{a}\left(T^{-1/2+\varepsilon} \right). 
\end{eqnarray*}
Thus, using \eqref{SSS1} and the above result, we obtain  
\begin{multline}                                                                                                             \label{SSS11} 
\int_{T}^{2T}S_{1,a}(x)^{}dx = 
\frac{1}{2(3+2a)\pi^2}\left(\sum_{n=1}^{\infty}\frac{h_{a}(n)^{2}}{n^{3/2}}\right)\left((2T)^{\frac{3}{2}+a} - T^{\frac{3}{2}+a}\right) 
+ \Ocal_{a}\left(T^{1+a+\varepsilon}\right).      
\end{multline}
Next, we consider the integral of $S_{2,a}(x)$. Using \eqref{lem4011}, the estimates
\begin{eqnarray*}
&&\sum_{n\leq x}(\sigma_{a}*{\bf 1})(n) \ll x \log x,
\\&&
\sum_{n\leq x}(\sigma_{a}*{\bf 1})(n)^{2} \ll x(\log x)^{8}, 
\end{eqnarray*}
and the Hilbert inequality \eqref{Hilbert}, we obtain  
\begin{eqnarray}
\label{SSS2} 
\int_{T}^{2T}  S_{2,a}(x)dx  &\ll& T^{1+a} \sum_{\substack{m_{1},m_{2},n_{1},n_{2} \leq y  \\ n_{1}m_{2} \neq  n_{2}m_{1}}}  
            \frac{1}{(m_{1}m_{2})^{\frac{5}{4}+\frac{a}{2}}}\frac{\sigma_{a}(n_1)\sigma_{a}(n_2)}{(n_{1}n_{2})^{\frac{3}{4}+\frac{a}{2}}}  
            \frac{1}{ \left|\sqrt{\frac{n_{1}}{m_{1}}}-\sqrt{\frac{n_{2}}{m_{2}}}\right|^{}}       \nonumber \\
&\ll& T^{1+a} \sum_{\substack{m_{1},m_{2},n_{1},n_{2} \leq y  \\ n_{1}m_{2} \neq  n_{2}m_{1}}}  
            \frac{\sigma_{a}(n_1)\sigma_{a}(n_2)}{(n_{2}m_{1}n_{1}m_{2})^{\frac{3}{4}+\frac{a}{2}}}   
            \frac{1}{ \left|\sqrt{n_{1}m_{2}}-\sqrt{n_{2}m_{1}}\right|^{}}       \nonumber \\
&\ll& T^{1+a} \sum_{\substack{m,n \leq y^2  \\ |\sqrt{n}-\sqrt{m}|\geq \frac12 (mn)^{\frac{1}{4}}}}  
            \frac{(\sigma_{a}*{\bf 1}_{})(n)(\sigma_{a}*{\bf 1}_{})(m)}{(nm)^{\frac{3}{4}+\frac{a}{2}}}   
            \frac{1}{\left|\sqrt{n}-\sqrt{m}\right|^{}}   \nonumber \\
&&+  T^{1+a} \sum_{\substack{m,n \leq y^2  \\ 0<|\sqrt{n}-\sqrt{m}|\leq  \frac12 (mn)^{\frac{1}{4}}}}  
            \frac{(\sigma_{a}*{\bf 1}_{})(n)(\sigma_{a}*{\bf 1})(m)}{(nm)^{\frac{3}{4}+\frac{a}{2}}} 
            \frac{1}{\left|\sqrt{n}-\sqrt{m}\right|}    \nonumber \\
&\ll& T^{1+a} \left(\sum_{n \leq y^2}\frac{(\sigma_{a}*\1)(n)}{n^{1+\frac{a}{2}}}\right)^{2} 
+    T^{1+a} \sum_{\substack{n,m  \leq y^2 \\ m\neq n}} 
      \frac{(\sigma_{a}*{\1})(n)(\sigma_{a}*\1)(m)}{(nm)^{\frac{1}{2}+\frac{a}{2}}|n-m|} \nonumber \\
&\ll& T^{1-a+\varepsilon} + 
     T^{1+a} \sum_{n \leq y^2} \frac{(\sigma_{a}*\1)(n)^{2}}{n^{1+a}} \nonumber \\
&\ll& T^{1-a+\varepsilon}.                                          
\end{eqnarray}
Lastly, we evaluate the integral $S_{3}(x)$. Using the formula 
$$
\sum_{n\leq x}\frac{\sigma_{a}(n)}{n^{1+\frac{a}{2}}} \ll x^{-\frac{a}{2}}
$$
and \eqref{lem4011} again, we obtain 
\begin{eqnarray}
\label{SSS3} 
\int_{T}^{2T}S_{3,a}(x)dx
&\ll& T^{1+a} \sum_{m_{1},m_{2},n_{1},n_{2} \leq y }  
       \frac{1}{(m_{1}m_{2})^{\frac{5}{4}+\frac{a}{2}}} \frac{\sigma_{a}(n_1)\sigma_{a}(n_2)}{(n_{1}n_{2})^{\frac{3}{4}+\frac{a}{2}}} 
       \frac{1}{\left|\sqrt{\frac{n_{1}}{m_{1}}}+\sqrt{\frac{n_{2}}{m_{2}}}\right|}
       \nonumber \\
&\ll& T^{1+a} \sum_{m_{1},m_{2},n_{1},n_{2} \leq y }    
            \frac{\sigma_{a}(n_1)\sigma_{a}(n_2)}{(m_{1}m_{2})^{\frac{5}{4}+\frac{a}{2}}(n_{1}n_{2})^{\frac{3}{4}+\frac{a}{2}}}   
            \frac{1}{ \left(\frac{n_{1}}{m_{1}}\cdot \frac{n_{2}}{m_{2}}\right)^{\frac{1}{4}}}         \nonumber \\
&\ll& T^{1+a} \sum_{m_{1},m_{2},n_{1},n_{2} \leq y }   
            \frac{\sigma_{a}(n_1)\sigma_{a}(n_2)}{(m_{1}m_{2}n_{1}n_{2})^{1+\frac{a}{2}}}   \nonumber  \\
&\ll& T^{1+a} 
     \left(\sum_{m\leq y}\frac{1}{m^{1+\frac{a}{2}}}\right)^{2} 
     \left(\sum_{n\leq y}\frac{\sigma_{a}(n)}{n^{1+\frac{a}{2}}}\right)^{2} 
\nonumber \\ &\ll& T^{1-a+\varepsilon}.                                                                                                  
\end{eqnarray} 
From \eqref{SSS11}--\eqref{SSS3},  we have    
\begin{equation}                                                           \label{SSS4} 
\int_{T}^{2T}G^{2}_{a}(x)dx  
=  \frac{1}{2(3+2a)\pi^2}\left(\sum_{n=1}^{\infty}\frac{h_{a}(n)^{2}}{n^{3/2}}\right)\left((2T)^{\frac{3}{2}+a} - T^{\frac{3}{2}+a}\right) 
 + \Ocal_{a}\left(T^{1-a+\varepsilon}\right). 
\end{equation} 
We use \eqref{III}, \eqref{SSS4} and the Cauchy--Schwarz inequality to obtain the desired result. 
\subsection{Proof   of  Theorem \ref{th4}}
We proceed  as in the proof of  Theorem \ref{th3}.    
We let $y=T^{1-\varepsilon}$  and  $N=y$ in  (\ref{lem3011}).  Then,  we find       
\begin{equation}                                                                                                               \label{ZZI}
K_{r}(x;\phi_{1+a}) = \frac{1}{r+1} H_{a}(x) + \Ocal_{r,a}\left(T^{\varepsilon}\right),  
\end{equation}
where  
$$
H_{a}(x) = \frac{x^{\frac{1}{4}+\frac{a}{2}}}{\sqrt{2}\pi} \sum_{m\leq y}\frac{(\mu*\mu)(m)}{m^{\frac{5}{4}+\frac{a}{2}}} 
 \sum_{n\leq y} \frac{\sigma_{a}(n)}{n^{\frac{3}{4}+\frac{a}{2}}}\cos\left(4\pi\sqrt{\frac{nx}{m}}-\frac{\pi}{4}\right).  
$$
Now, we consider the mean square formula of $H_{a}(x)$,  that  is   
\begin{eqnarray*}                                                                                            
H_{a}(x)^{2} &=& 
\frac{x^{\frac12 +a}}{2\pi^2}\sum_{m_{1},m_{2}\leq y}\frac{(\mu*\mu)(m_1)(\mu*\mu)(m_2)}{(m_{1}m_{2})^{\frac{5}{4}+\frac{a}{2}}}
                      \sum_{n_{1},n_{2}\leq y}\frac{\sigma_{a}(n_{1})\sigma_{a}(n_{2})}{(n_{1}n_{2})^{\frac{3}{4}+\frac{a}{2}}}  \times  \\ 
&& \times \cos\left(4\pi\sqrt{\frac{n_{1}x}{m_{1}}}-\frac{\pi}{4}\right) \cos\left(4\pi\sqrt{\frac{n_{2}x}{m_{2}}}-\frac{\pi}{4}\right) \\ 
& :=& Y_{1,a}(x) + Y_{2,a}(x) + Y_{3,a}(x), 
\end{eqnarray*}
say,  where  
\begin{equation*}
Y_{1,a}(x) = \frac{x^{\frac{1}{2}+a}}{4\pi^2}\sum_{\substack{m_{1},m_{2},n_{1},n_{2} \leq y  \\ n_{1}m_{2} = n_{2}m_{1}}}  
   \frac{(\mu*\mu)(m_1)(\mu*\mu)(m_2)}{(m_{1}m_{2})^{\frac{5}{4}+\frac{a}{2}}}
 \frac{\sigma_{a}(n_1)\sigma_{a}(n_2)}{(n_{1}n_{2})^{\frac{3}{4}+\frac{a}{2}}},   
\end{equation*}
\begin{multline*}
Y_{2,a}(x)=   \frac{x^{\frac{1}{2}+a}}{4\pi^2}\sum_{\substack{m_{1},m_{2},n_{1},n_{2} \leq y  \\ n_{1}m_{2} \neq  n_{2}m_{1}}}  
 \frac{(\mu*\mu)(m_1)(\mu*\mu)(m_2)}{(m_{1}m_{2})^{\frac{5}{4}+\frac{a}{2}}}
 \frac{\sigma_{a}(n_1)\sigma_{a}(n_2)}{(n_{1}n_{2})^{\frac{3}{4}+\frac{a}{2}}}  \times  \\
 \times   \cos\left(4\pi\sqrt{x}\left(\sqrt{\frac{n_1}{m_1}}-\sqrt{\frac{n_2}{m_2}}\right)\right),
        \end{multline*}
and
\begin{multline*}
Y_{3,a}(x)=  \frac{x^{\frac{1}{2}+a}}{4\pi^2}\sum_{m_{1},m_{2},n_{1},n_{2} \leq y}  
            \frac{(\mu*\mu)(m_1)(\mu*\mu)(m_2)}{(m_{1}m_{2})^{\frac{5}{4}+\frac{a}{2}}}
            \frac{\sigma_{a}(n_1)\sigma_{a}(n_2)}{(n_{1}n_{2})^{\frac{3}{4}+\frac{a}{2}}}  \times  \\
  \times   \sin\left(4\pi\sqrt{x}\left(\sqrt{\frac{n_1}{m_1}}+\sqrt{\frac{n_2}{m_2}}\right)\right).                        
\end{multline*} 
Firstly, we have 
\begin{equation}                                                                                                             \label{SS1} 
\int_{T}^{2T}Y_{1,a}(x)^{}dx = \frac{Q_{a}(T)}{2(3+2a)\pi^2}\left((2T)^{\frac{3}{2}+a} - T^{\frac{3}{2}+a}\right),   
\end{equation}
where 
$$
Q_{a}(T) =\sum_{\substack{m_{1}, m_{2}, n_{1}, n_{2} \leq y  \\ n_{1}m_{2} = n_{2}m_{1}}}  
            \frac{(\mu*\mu)(m_1)(\mu*\mu)(m_2)}{(m_{1}m_{2})^{\frac{5}{4}+\frac{a}{2}}}  
            \frac{\sigma_{a}(n_1)\sigma_{a}(n_2)}{(n_{1}n_{2})^{\frac{3}{2}+\frac{a}{2}}}.  
$$
To evaluate $Q_{a}(T)$, one can write   
\begin{eqnarray*}
Q_{a}(T) &=&  \sum_{\substack{m_{1},m_{2},n_{1},n_{2} \leq y  \\ n_{1}m_{2} = n_{2}m_{1}}}  
            \frac{(\mu*\mu)(m_1)(\mu*\mu)(m_2)\sigma_{a}(n_1)\sigma_{a}(n_2)(m_{1}m_{2})^{-\frac{1+a}{2}}}{(n_{1}m_{2}n_{2}m_{1})^{3/4}} \\
     & =& \sum_{n\leq y^2} \frac{g(n;y)^{2}}{n^{3/2}},  
\end{eqnarray*}
where
$$
g(n;y)= \sum_{\substack{n=ml \\ m,l\leq y}} \frac{(\mu*\mu)(m)}{m^{\frac{1+a}{2}}} \sigma_{a}(l).  
$$
Now, let 
$$
g_{0}(n) := \sum_{n=ml} \frac{(\mu*\mu)(m)}{m^{\frac{1+a}{2}}}\sigma_{a}(l) 
$$
and
$$ 
g_{1}(n)= \sum_{n=ml} \frac{\tau(m)}{m{\frac{1+a}{2}}} \sigma_{a}(l).
$$
It follows that 
$g(n;y)=g_{0}(n)$ for $n\leq y$,    
$|g(n;y)|\leq g_{1}(n)$  and  $|g_{0}(n)|\leq g_{1}(n)$  for $n\geq 1$. 
Using   the partial sum (see~\cite[Shiu's theorem]{S})   
\begin{equation*}
\sum_{n\leq x}g_{1}^{2}(n) \ll \frac{x}{\log x} {\rm exp}\left(\sum_{p\leq x}\frac{g_{1}^{2}(p)}{p}\right)   
\ll x^{1+\varepsilon},
\end{equation*}  
 we have 
\begin{equation*}
Q_{a}(T)  = \sum_{n=1}^{\infty}\frac{g_{0}^{2}(n)}{n^{3/2}} + \Ocal_{a}\left(y^{-1/2+\varepsilon}\right).   
\end{equation*}
Hence
\begin{equation}                                                                                                             \label{SS11} 
\int_{T}^{2T}Y_{1,a}(x)^{}dx 
= \frac{1}{6\pi^2}\left(\sum_{n=1}^{\infty}\frac{g_{0}^{2}(n)}{n^{3/2}} \right) \left((2T)^{3/2} - T^{3/2}\right)  + 
  \Ocal_{a}\left(T^{1+a+\varepsilon}\right).   
\end{equation}
Next, we shall consider the integrals  of $Y_{2}(x)$ and $Y_{3}(x)$. 
Due to the method of proofs of \eqref{SSS2} and \eqref{SSS3}, we use \eqref{lem4011}, \eqref{Hilbert} and the inequalities 
$$
\sum_{n\leq x}(\sigma_{a}*\tau)(n) \ll x^{1+\varepsilon}
\qquad {\rm and} \qquad 
\sum_{n\leq x}(\sigma_{a}*\tau)(n)^{2} \ll x^{1+\varepsilon}
$$
to  obtain  
\begin{eqnarray}                                                                                                            \label{SS2} 
 \int_{T}^{2T}Y_{2,a}(x)dx
&\ll& T^{1+a} \sum_{\substack{m_{1},m_{2},n_{1},n_{2} \leq y  \\ n_{1}m_{2} \neq  n_{2}m_{1}}}  
            \frac{\tau(m_1)\tau(m_2)\sigma_{a}(n_1)\sigma_{a}(n_2)}{(n_{2}m_{1}n_{1}m_{2})^{\frac{3}{4}+\frac{a}{2}}}   
            \frac{1}{ \left|\sqrt{n_{1}m_{2}}-\sqrt{n_{2}m_{1}}\right|}       \nonumber \\
&\ll& T^{1+a} \left(\sum_{n \leq y^2}\frac{(\sigma_{a}*\tau)(n)}{n^{1+\frac{a}{2}}}\right)^{2} 
+    T^{1+a} \sum_{\substack{n,m  \leq y^2 \\ m\neq n}} 
      \frac{(\sigma_{a}*\tau)(n)(\sigma_{a}*\tau)(m)}{(nm)^{\frac{1}{2}+\frac{a}{2}}}\frac{1}{|n-m|} \nonumber \\
&\ll& T^{1-a+\varepsilon} + 
     T^{1+a} \sum_{n \leq y^2} \frac{(\sigma_{a}*\tau)(n)^{2}}{n^{1+a}} \nonumber \\
&\ll& T^{1-a+\varepsilon}                                                                                        
\end{eqnarray}
and 
\begin{eqnarray}                                                                                                            \label{SS3} 
\int_{T}^{2T}Y_{3,a}(x)dx   
 &\ll& T^{1+a} \sum_{m_{1},m_{2},n_{1},n_{2} \leq y }   
            \frac{\tau(m_1)\tau(m_2)\sigma_{a}(n_1)\sigma_{a}(n_2)}{(m_{1}m_{2}n_{1}n_{2})^{1+\frac{a}{2}}}   \nonumber  \\
& \ll& T^{1+a} 
     \left(\sum_{m\leq y}\frac{\tau(m)}{m^{1+\frac{a}{2}}}\right)^{2} 
     \left(\sum_{n\leq y}\frac{\sigma_{a}(n)}{n^{1+\frac{a}{2}}}\right)^{2} \nonumber \\
& \ll& T^{1-a+\varepsilon}.                                                                                           
\end{eqnarray} 
From (\ref{SS11})--(\ref{SS3})  we have    
\begin{align}
\int_{T}^{2T}H^{2}_{a}(x)dx  
&=  \frac{1}{2(3+2a)\pi^2} \sum_{n=1}^{\infty}\frac{g_{0}^{2}(n)}{n^{3/2}} \left((2T)^{3/2} - T^{3/2}\right) + \Ocal_{a}\left(T^{1-a+\varepsilon}\right).   \label{SS4} 
\end{align} 
We use (\ref{ZZI}), (\ref{SS4}) and the Cauchy--Schwarz inequality to obtain the desired result. 

In the same manner as the above,   we can prove  the formula (\ref{sum_th44}).  
\section*{Acknowledgement}
The second  author is supported by the Japan Society for the Promotion of Science (JSPS). 
``Overseas researcher under Postdoctoral Fellowship of JSPS" and the Austrian Science
Fund (FWF) : Project F5507-N26, which is part
of the special Research Program  `` Quasi Monte
Carlo Methods : Theory and Application''.  

\medskip\noindent {\footnotesize Isao Kiuchi: Department of Mathematical Sciences, Faculty of Science,
Yamaguchi University, Yoshida 1677-1, Yamaguchi 753-8512, Japan. \\
e-mail: {\tt kiuchi@yamaguchi-u.ac.jp}}

\medskip\noindent {\footnotesize Sumaia Saad Eddin: Graduate School of Mathematics, Nagoya University, Furo-cho, Chikusa-ku, Nagoya, Aichi 464-8602, Japan.\\
e-mail: {\tt saad.eddin@math.nagoya-u.ac.jp}}

\end{document}